\tikzset{>=latex}
\numberwithin{equation}{section}
\theoremstyle{plain}
\newtheorem{theorem}{Theorem}[section]
\newtheorem{corollary}[theorem]{Corollary}
\newtheorem{cor}[theorem]{Corollary}
\newtheorem{proposition}[theorem]{Proposition}
\newtheorem{lemma}[theorem]{Lemma}
\newtheorem{result}[theorem]{Result}
\theoremstyle{definition}
\newtheorem{remark}[theorem]{Remark}
\newtheorem{example}[theorem]{Example}
\newtheorem{examples}[theorem]{Examples}
\newtheorem{definition}[theorem]{Definition}
\newtheorem{problem}[theorem]{Problem}
\newtheorem{observation}[theorem]{Observation}
 \DeclareMathOperator{\dist}{dist}
 \DeclareMathOperator{\supp}{supp}
 \DeclareMathOperator{\Lin}{span}
 \DeclareMathOperator{\cone}{cone}
 \DeclareMathOperator{\co}{conv}
 \DeclareMathOperator{\cco}{\overline{conv}}
\newcommand{\R}{\mathbb{R}}
\newcommand{\N}{\mathbb{N}}
\newcommand{\D}{\mathbb{D}}
\newcommand{\C}{\mathbb{C}}
\newcommand{\mycolon}{{:}\allowbreak\ }
\newcommand{\eps}{\varepsilon}
\renewcommand{\leq}{\leqslant}
\renewcommand{\geq}{\geqslant}
\renewcommand{\le}{\leqslant}
\renewcommand{\ge}{\geqslant}
\newcommand{\Abs}[1]{\ensuremath{\left|#1\right|}}
\DeclareMathOperator{\NA}{NA}
\begin{document}

\title{Norm attaining operators of finite rank}

\author[V.~Kadets]{Vladimir Kadets}
\author[G.~L\'opez]{Gin\'es L\'opez}
\author[M.~Mart\'{\i}n]{Miguel Mart\'{\i}n}
\author[D.~Werner]{Dirk Werner}

\address[Kadets]{School of Mathematics and Informatics \\
  V.~N.~Karazin   Kharkiv National University \\
pl.~Svobody~4 \\
61022~Kharkiv \\ Ukraine
\newline
\href{http://orcid.org/0000-0002-5606-2679}{ORCID: \texttt{0000-0002-5606-2679} }
}
\email{vova1kadets@yahoo.com}

\address[L\'opez]{Departamento de An\'{a}lisis Matem\'{a}tico \\ Facultad de
 Ciencias \\ Universidad de Granada \\ 18071 Granada, Spain
\newline
\href{http://orcid.org/0000-0002-3689-1365}{ORCID: \texttt{0000-0002-3689-1365} }
 }
\email{glopezp@ugr.es}

\address[Mart\'{\i}n]{Departamento de An\'{a}lisis Matem\'{a}tico \\ Facultad de
 Ciencias \\ Universidad de Granada \\ 18071 Granada, Spain
\newline
\href{http://orcid.org/0000-0003-4502-798X}{ORCID: \texttt{0000-0003-4502-798X} }
 }
\email{mmartins@ugr.es}

\address[Werner]{Department of Mathematics \\ Freie Universit\"at Berlin \\
Arnimallee~6 \\ D-14195~Berlin \\ Germany\newline
\href{http://orcid.org/0000-0003-0386-9652}{ORCID: \texttt{0000-0003-0386-9652}}
}
\email{werner@math.fu-berlin.de}

\subjclass[2010]{Primary 46B04; Secondary 46B20}

\keywords{norm attaining operators; cones of norm attaining functionals}

\thanks{The research of the first author was done in the framework of the Ukrainian Ministry of Science and Education Research Program 0118U002036. The research of the second and third authors was partially supported by projects MTM2015-65020-P (MINECO/FEDER, UE) and PGC2018-093794-B-I00 (MCIU/AEI/FEDER, UE) and by the grant FQM-185 (Junta de Andaluc\'{\i}a/FEDER, UE). Part of the research was done during several stays of the first and fourth authors to the University of Granada which have been supported by the project MTM2015-65020-P (MINECO/FEDER, UE)}

\begin{abstract}
We provide sufficient conditions on a Banach space $X$ in order that there exist norm attaining operators of rank at least two from $X$ into any Banach space of dimension at least two. For example, a rather weak such condition is the existence of a non-trivial cone consisting of norm attaining functionals on~$X$. We go on to discuss density of norm attaining operators of finite rank among all operators of finite rank, which holds for instance when there is a dense linear subspace consisting of norm attaining functionals on~$X$. In particular, we consider the case of Hilbert space valued operators where we obtain a complete characterization of these properties. In the final section we offer a candidate for a counterexample to the complex Bishop-Phelps theorem on~$c_0$, the first such counterexample on a certain complex Banach space being due to V.~Lomonosov.
\end{abstract}

\date{May 16th, 2019; revised September 23rd, 2019}

\dedicatory{Dedicated to the memory of Victor Lomonosov}

\maketitle
\thispagestyle{empty}


\section{Introduction}

Shortly after Bishop and Phelps's papers (\cite{BP1}, \cite{BP2})
on the density of norm attaining functionals on a Banach space had appeared, Lindenstrauss, in his seminal work \cite{Lin}, launched the study of norm attaining operators. Let us recall that a bounded linear operator $T\mycolon X\longrightarrow Y$ between Banach spaces, $T\in \mathcal{L}(X,Y)$, is called \textit{norm attaining} if there is some $x_0\in X$ with $\|x_0\|=1$ and $\|Tx_0\|=\|T\|$; in this case we write $T\in \NA(X,Y)$. Lindenstrauss introduced the following properties~(A) and~(B) of a Banach space: $X$ has~(A) if $\NA(X,Z)$ is dense in $\mathcal{L}(X,Z)$ for all~$Z$; and $Y$ has~(B) if $\NA(W,Y)$ is dense in $\mathcal{L}(W,Y)$ for all~$W$. Among many other results, he showed that reflexive spaces have~(A) as does $\ell_1$, and that $c_0$, $\ell_\infty$ and finite dimensional polyhedral spaces are examples of Banach spaces with property~(B) and, finally, that there are Banach spaces $(X,Y)$ such that $\NA(X,Y)$ is not dense in $\mathcal{L}(X,Y)$. Major progress was made by Bourgain \cite{Bou} who proved that spaces with the RNP have property~(A) and provided a certain converse result. The problem of whether Hilbert spaces have property~(B) was left open in \cite{Lin}, and it was solved only 25~years later by Gowers \cite[Appendix]{Gow}, who showed that none of the spaces $\ell_p$ for $1<p<\infty$ have~(B). This result was pushed out by M.~Acosta \cite{Aco-Edinburgh, Aco-contemporary} showing that neither infinite-dimensional $L_1(\mu)$ spaces nor any strictly convex infinite dimensional Banach space have property~(B). Finally, let us comment that even though there are many Banach spaces $X$ for which all compact linear operators from them can be approximated by norm attaining (finite rank) operators \cite{JoWo} (including $X=C_0(L)$ and $X=L_1(\mu)$), it was proved in \cite{M-JFA2014} that there exists a compact operator between certain Banach spaces which cannot be approximated by norm attaining operators. For more information and background on the topic of norm attaining operators we refer to the survey papers \cite{Aco} and \cite{M-RACSAM}.

One should observe that none of the negative results summed up above says anything about operators of finite rank. Actually, it is one of the major open questions in the theory whether all finite dimensional Banach spaces have Lindenstrauss's property~(B); equivalently, whether every finite rank operator between Banach spaces can be approximated by (finite rank) norm attaining operators. The aim of our paper is to contribute to this problem, in particular for rank-two operators.

In the case of linear functionals, it is clear from the Hahn-Banach theorem that $\NA(X)$, the set of norm attaining functionals, is always non-empty. For operators of rank two it is not clear at all whether there are norm attaining ones. We are going to investigate this problem in detail, both in general and in the particular case when the range space is the two-dimensional Hilbert space~$\ell_2^2$.

Let us outline the contents of the paper. We devote Section~\ref{sect:4conditions} to the study of the existence of norm attaining operators of finite rank.
For certain Banach spaces we prove that there are norm attaining operators of finite rank to all range spaces using
known sufficient conditions. A first new result says that whenever $\NA(X)$ contains $n$-dimensional subspaces for a Banach space $X$, there are rank~$n$ norm attaining operators from $X$ into any Banach space $Y$ with dimension at least~$n$. There are Banach spaces $X$ for which $\NA(X)$ contains no two-dimensional subspaces (this is proved in \cite{Rmoutil}, \cite{KLM} or \cite{KLMW} building on the ingenious ideas of Ch.~Read \cite{Read}), so this does not solve the existence problem for all domain spaces. However, we also show that it is sufficient for the existence of norm attaining rank two operators that $\NA(X)$ contains a nontrivial cone. We do not know whether this condition is also necessary; neither do we know whether every Banach space shares this property.

Section \ref{section:density} contains a new result on the density of norm attaining finite rank operators from a Banach space $X$: this is the case if $\NA(X)$ contains ``sufficiently many'' linear subspaces. In particular, this holds if $\NA(X)$ contains a dense linear subspace (for instance if $\NA(X)$ is actually a linear space itself). We use this result to recover  the known results from \cite{JoWo} and \cite{M-RACSAM} about the density  of norm attaining compact operators on a Banach space whose dual satisfies an appropriate version of the approximation property, like for example $C_0(L)$ spaces, $L_1(\mu)$ spaces, preduals of $\ell_1$, among many others. But it can also be  used to get some new results. Among other examples, we show that all finite rank operators from $X$ can be approximated by norm attaining operators in the following cases: $X$ is a finite-codimensional proximinal subspace of $c_0$ or of $\mathcal{K}(\ell_2)$, $X$ is a $c_0$-sum of reflexive spaces.

The special case when the range space is a two-dimensional Hilbert space is studied in Section~\ref{sect:naoperators}. Here, we characterize the norm attaining rank-two operators in $\mathcal{L}(X,\ell_2^2)$ in terms of the geometry of the dual norm on the set~$\NA(X)$. As a consequence, we show that the set of norm attaining rank-two operators in $\mathcal{L}(X,\ell_2^2)$ is not empty if and only if there are $f\in \NA(X)$ and $g\in X^*$ with $\|f\|=1$ and $0<\|g\|\leq 1$ such that $\Vert f+tg\Vert\leq \sqrt{1+t^2}$ for all $t\in\R$ and if and only if there is $f\in \NA(X)$ with $\|f\|=1$ such that the operator $f\otimes (1,0)\in \mathcal{L}(X,\ell_2^2)$ is not an extreme point in the unit ball of $\mathcal{L}(X,\ell_2^2)$. We do not know if such a norm attaining functional $f$ can be found on every Banach space $X$.

The last part of the paper, Section~\ref{sect:Lomonosov}, is devoted to
commenting on V.~Lomonosov's solution of the complex Bishop-Phelps problem, which is explained in the first few paragraphs of that section. We present a subset of the complex space $c_0$ which might be a candidate for a bounded closed convex subset without modulus attaining (complex) functionals, that is, a possible Lomonosov type example in~$c_0$.

\bigskip

We finish this introduction with the needed notation. We have already explained the notation $\mathcal{L}(X,Y)$, $\NA(X,Y)$, and $\NA(X)$. In addition, we define $\NA_1(X) := \{f\in \NA(X)\mycolon \|f\|=1\}$. For $k\in \N$ with $k\geq 2$, we also use the notation $\mathcal{L}^{(k)}(X,Y)$ (resp.\ $\NA^{(k)}(X,Y)$) for the subset of $\mathcal{L}(X,Y)$ (resp.\ $\NA(X,Y)$) consisting of operators of rank~$k$. As usual, $B_X=\{x\in X\mycolon \|x\|\le 1\}$ stands for the closed unit ball of~$X$, $S_X=\{x\in X\mycolon \|x\|=1\}$ for its unit sphere and, less canonically,
$U_X=\{x\in X\mycolon \|x\|<1\}$ for its open unit ball. Further needed notation is the following: $M^{\bot}$ is the annihilator in $X^*$ of a closed subspace $M$ of $X$, $J_X\mycolon X\longrightarrow X^{**}$ denotes the canonical isometric inclusion of a Banach space into its bidual, $\mathcal{K}(X,Y)$ is the space of compact linear operators between $X$ and $Y$,
$\cone\{f,g\}$ stands for the cone generated by $f$ and~$g$, that is, $\cone\{f,g\}= \{ af+bg\mycolon a,b\ge0\}$.

If $x_0\in X$ is a non-zero vector, any functional $f\in S_{X^*}$ with $f(x_0)=\|x_0\|$ is called a \textit{supporting functional} at~$x_0$.
(Obviously the supporting functionals are precisely the norm attaining ones.)
If $x_0\in S_X$ admits a unique supporting functional, it is called a \textit{smooth point}. A theorem due to Mazur guarantees that for separable (in particular finite-dimensional) $X$ the set of smooth points is dense in~$S_X$ \cite[Th.~20F]{Holmes}.

Finally, let us remark that the spaces considered in this paper are Banach spaces over the reals, with the exception of Section~\ref{sect:Lomonosov} where complex Banach spaces are the issue.

\section{Existence of norm attaining finite rank operators}\label{sect:4conditions}
Before asking for the density of finite rank norm attaining operators, we should ask for the existence of such operators. It is not clear, to the best of our knowledge, that for all Banach spaces $X$ and $Y$ of dimension at least two, there exists a norm attaining operator from $X$ to $Y$ with finite rank greater than one. Our goal in this section is to discuss known and new sufficient conditions for the existence of norm attaining finite rank operators. In particular, we will focus  on the rank-two case. So the leading question here is the following.

\begin{problem}\label{problem:existence}
Is $\NA^{(2)}(X,Y)$ non-empty for all Banach spaces $X$ and $Y$ of dimension at least two?
\end{problem}

An obvious comment here is that for the above problem, it is enough to deal with range spaces $Y$ of dimension two. The next comment, though easy as well, is more surprising.

\begin{remark}\label{remark:toproblemexistence}
(a) {\slshape If $X$ is a Banach space and $\NA^{(2)}(X,\ell_2^2)$ is non-empty, then $\NA^{(2)}(X,Y)$ is also non-empty for every Banach space $Y$ of dimension at least two.}

  Indeed, we can assume that $Y$ is a two-dimensional Banach space. Now, take $T\in\NA^{(2)}(X,\ell_2^2)$ with $\|T\|=1$ and pick $x_0\in S_X$ such that $\Vert T(x_0)\Vert=1$. Fix an isomorphism $S$ from $\ell_2^2$ onto $Y$ with $\Vert S\Vert=1$. Then $S$ attains its norm, so there is $z\in S_{\ell_2^2}$ such that $\Vert S(z)\Vert=1$. Consider a rotation operator $\pi$ on $\ell_2^2$ such that $\pi(T(x_0))=z$. Then, $S\pi T\in\NA^{(2)}(X,Y)$ and so $\NA^{(2)}(X,Y)$ is non-empty.

(b) In fact, {\slshape if there exist $n\in \N$ and Banach spaces $X$ and $Y$ of dimension at least $n$ such that $\NA^{(n)}(X,Y)=\emptyset$, then
    $$
    \NA(X,\ell_2)\subset \bigcup\nolimits_{k\leq n-1}\mathcal{L}^{(k)}(X,\ell_2),
    $$
that is, every norm attaining operator from $X$ to $\ell_2$ has rank at most~${n-1}$.}

Indeed, if there exists $T\in \NA(X,\ell_2)$ whose rank is greater than or equal to $n$ (or even has infinite rank), then composing with a suitable orthogonal projection $P$ from $\ell_2$ to an $n$-dimensional subspace $H_n$ of $\ell_2$, we get that $PT\in \NA^{(n)}(X,H_n) \cong \NA^{(n)}(X,\ell_2^n)$. By an argument completely identical to the one given in item~(a), this provides that $\NA^{(n)}(X,Y)\neq \emptyset$.
\end{remark}

The above arguments explain the key role of $\ell_2^2$ to solve the problem of deciding whether $\NA^{(2)}(X,Y)$ is non-empty for all Banach spaces $X$ and $Y$ with $\dim(Y)\geq 2$. However, in the following we will study the problem for arbitrary range spaces~$Y$. In Section~\ref{sect:naoperators} we will  study  the particular case of $\NA^{(2)}(X,\ell_2^2)$ and we will even give characterizations of the statement that this set is non-empty.

But let us return to the general case of Problem~\ref{problem:existence}. First, we try to focus on the range space. If $Y$ is a polyhedral two-dimensional Banach space, then as a result of Lindenstrauss \cite[Proposition~3]{Lin}, $\NA(X,Y)$ is dense in $\mathcal{L}(X,Y)$ for all Banach spaces $X$, so the result is clear. Next, if $Y$ is not polyhedral and  not strictly convex either, then it is easy to construct a norm attaining rank-two operator from any Banach space $X$ of dimension greater than one into $Y$ (indeed, go first onto $\ell_\infty^2$ and then use that $S_Y$ contains a segment to produce an injective operator from $\ell_\infty^2$ into $Y$ that carries a whole maximal face of $S_{\ell_\infty^2}$ to the unit sphere of $Y$, see the proof of Proposition~\ref{prop:allNAimpliesybotNA-notrotund}). For strictly convex range spaces $Y$, we do not know the answer, even for $Y$ being a two-dimensional Hilbert space, and actually this case will be studied in depth in Section~\ref{sect:naoperators}, as announced before.

In this section we will mainly focus on the domain space. Our first comment is that, since compact operators are completely continuous, every compact operator whose domain is a reflexive space attains its norm (see \cite[p.~270]{M-RACSAM} for an argument). Next, there is an easy argument to get rank-two norm-attaining operators from a given Banach space $X$ having a one-complemented reflexive subspace of dimension greater than one to arbitrary range spaces~$Y$. Indeed, let $P\mycolon X\longrightarrow X$ be a norm-one projection such that $P(X)=Z$ is reflexive and $\dim(Z)\geq 2$. Now, every finite rank (actually compact) operator from a reflexive space is norm-attaining, so we just have to compose an arbitrary rank-two operator $S\mycolon Z \longrightarrow Y$ with the operator $P$ viewed as $P\mycolon X\longrightarrow P(X)=Z$ to get that $T=SP\in \mathcal{L}(X,Y)$ has rank-two and attains its norm (indeed,  $\|T\|=\|S\|$ and there exists $z\in S_Z$ such that $\|Sz\|=\|T\|$, but $z=P(z)\in S_X$ and so $\|Tz\|=\|SPz\|=\|Sz\|=\|T\|$). Actually, the same proof shows that every compact operator which factors through $P$ is norm attaining, but this is the same as requiring that the kernel of the operator contains the kernel of $P$.

\begin{result}[\mbox{\rm Folklore}]
Let $X$ be a Banach space, let $P\in \mathcal{L}(X,X)$ be a norm-one projection such that $P(X)$ is reflexive, and let $Y$ be an arbitrary Banach space. Then, every compact operator $T\mycolon X\longrightarrow Y$ for which $\ker P\subset \ker T$ attains its norm.
\end{result}

But to have one-complemented closed subspaces of dimension greater than one is a quite strong requirement, and there are even Banach spaces without  norm-one projections apart from the trivial ones (the identity and rank-one projections), see \cite{BoszGar} for a finite-dimensional example.

Anyway, a quick glance at the proof of the above result makes one realize that the only properties of the norm-one projection $P$ that we have used are that $P(X)$ is reflexive and that $P(B_X)=B_{P(X)}$, but not that $P^2=P$. As $P(X)=X/{\ker P}$, we may try to consider general quotient maps instead of projections. Let $X$ be a Banach space, let $Z$ be a closed subspace of $X$, and let $Y$ be an arbitrary Banach space. Suppose that $X/Z$ is reflexive (this  is usually referred to by saying that $Z$ is a \emph{factor reflexive} subspace of~$X$) and suppose also that the quotient map $q\mycolon X\longrightarrow X/Z$ satisfies that $q(B_X)=B_{X/Z}$, then every compact operator $T\mycolon X\longrightarrow Y$ such that $Z\subset \ker T$ attains its norm. Indeed, we may write $T=\widetilde{T}\circ q$ where $\|\widetilde{T}\|=\|T\|$ and $\widetilde{T}$ is compact, so there is $\xi \in B_{X/Z}$ such that $\|\widetilde{T}(\xi)\|=\|T\|$, but $\xi=q(x)$ for some $x\in B_X$ by hypothesis, so $\|T(x)\|=\|T\|$.

How to get the condition that $q(B_X)=B_{X/Z}$? This is just the proximinality of $Z$ in $X$. Recall that a (closed) subspace $M$ of $X$ is called \textit{proximinal} if for each $x\in X$ there is some $m\in M$ such that $\|x-m\|= \dist(x,M)$. We refer to the book \cite{Singer} for background. Clearly, $M$ is proximinal in $X$ if and only if $q(B_X)=B_{X/M}$, see \cite[Theorem~2.2]{Singer} for instance. Therefore, we have shown the following.

\begin{result}[\mbox{\rm Folklore}]\label{result:factor-reflexive-implies-existence}
Let $X$ be a Banach space, let $Z$ be a factor reflexive proximinal subspace of $X$, and let $Y$ be an arbitrary Banach space. Then, every compact operator $T\mycolon X\longrightarrow Y$ for which $Z\subset \ker T$ attains its norm.
\end{result}

The problem of whether every infinite-dimensional Banach space contains a two-codimensional proximinal subspace \cite[Problem~2.1]{Singer} was open until a celebrated example was  recently given by Read~\cite{Read}: there is a Banach space $\mathcal{R}$ containing no finite-codimensional proximinal subspaces of codimension greater than one (and then it contains no proximinal factor reflexive subspaces of infinite codimension either, use \cite[Proposition~2.3]{Narayana-Rao}). We refer the reader to \cite{KLM,KLMW,Rmoutil} for more information on Read type spaces. Therefore, Result~\ref{result:factor-reflexive-implies-existence} does not provide a complete positive solution of Problem~\ref{problem:existence}.

Our next step is to get a slightly weaker sufficient condition for norm attainment than the one given in Result~\ref{result:factor-reflexive-implies-existence}, which is new as far as we know. Namely, it is easy to see that if $Z$ is a factor reflexive proximinal subspace of a Banach space $X$, then $Z^\bot\subset \NA(X)$ (see \cite[Lemma~2.2]{Band-Godefroy} for instance), but the converse result is not true (see \cite[Section~2]{InduPLMS1982} or \cite[Section~2]{Rmoutil} for a discussion of this). Our result is that the condition $Z^\bot\subset \NA(X)$ is enough to get the conclusion of Result~\ref{result:factor-reflexive-implies-existence}.

\begin{proposition}\label{prop:kerbotinNAimpliesNA}
Let $X$ be a Banach space, let $Z$ be a closed subspace of $X$ such that $Z^\bot\subset \NA(X)$, and let $Y$ be an arbitrary Banach space. Then, every compact operator $T\in \mathcal{L}(X,Y)$ for which $Z\subset \ker T$ attains its norm.
\end{proposition}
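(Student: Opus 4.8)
The plan is to factor $T$ through the quotient $X/Z$ and to exploit the identification $(X/Z)^*\cong Z^\bot$ so as to convert the hypothesis $Z^\bot\subset\NA(X)$ into reflexivity of $X/Z$; the point $\xi$ where the factored operator attains its norm need not lift to $X$ (we have dropped proximinality), so the real work is to transfer the norm attainment to the dual side and use the hypothesis on $Z^\bot$ to recover a norm-attaining point of $T$ itself.

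First, since $Z\subset\ker T$, the operator factors as $T=\widetilde{T}\circ q$, where $q\mycolon X\longrightarrow X/Z$ is the quotient map and $\widetilde{T}\mycolon X/Z\longrightarrow Y$ is compact with $\|\widetilde{T}\|=\|T\|$. Next I would observe that $X/Z$ is reflexive: every $\phi\in (X/Z)^*$ pulls back to $f=\phi\circ q\in Z^\bot\subset\NA(X)$ with $\|f\|=\|\phi\|$, and if $f$ attains its norm at some $x\in S_X$ then $\phi$ attains its norm at $q(x)$, since $\phi(q(x))=f(x)=\|\phi\|$ together with $\|q(x)\|\le 1$ forces $\|q(x)\|=1$. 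Thus every functional on $X/Z$ is norm attaining, and James's theorem yields that $X/Z$ is reflexive. As recalled at the beginning of this section, a compact operator from a reflexive space attains its norm, so there is $\xi\in S_{X/Z}$ with $\|\widetilde{T}\xi\|=\|\widetilde{T}\|=\|T\|$.

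The difficulty is that, without proximinality of $Z$, the point $\xi$ need not lie in $q(B_X)$, and hence cannot be lifted directly to a point of $S_X$ at which $T$ attains its norm. To circumvent this I would move the norm attainment to the dual: by Hahn-Banach choose $y^*\in S_{Y^*}$ with $y^*(\widetilde{T}\xi)=\|\widetilde{T}\xi\|=\|T\|$, and set $g:=\widetilde{T}^*y^*\in (X/Z)^*$. Then $\|g\|\le\|\widetilde{T}\|=\|T\|$, while $g(\xi)=y^*(\widetilde{T}\xi)=\|T\|$ forces $\|g\|=\|T\|$. Pulling back, $f:=g\circ q$ belongs to $Z^\bot\subset\NA(X)$ with $\|f\|=\|T\|$, so there is $x_0\in S_X$ with $f(x_0)=\|f\|=\|T\|$.

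It then remains only to verify that $x_0$ is the desired point. Indeed, using $\|y^*\|=1$ and the definition of $g$,
$$
\|Tx_0\|=\|\widetilde{T}q(x_0)\|\ge y^*\bigl(\widetilde{T}q(x_0)\bigr)=g(q(x_0))=f(x_0)=\|T\|,
$$
while $\|Tx_0\|\le\|T\|$ is automatic, so $\|Tx_0\|=\|T\|$ with $x_0\in S_X$. The point worth isolating is that the chosen supporting functional $y^*$ already witnesses the full norm $\|T\|$ at the image of \emph{any} point where $g$ (equivalently $f$) attains its norm; this is precisely what allows the norm-attainment hypothesis on $Z^\bot$ to replace the proximinality used in Result~\ref{result:factor-reflexive-implies-existence}. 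I expect the only delicate step to be the passage from $Z^\bot\subset\NA(X)$ to reflexivity of $X/Z$ via James's theorem; the remainder is a direct chain of (in)equalities.
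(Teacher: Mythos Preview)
Your proof is correct and follows essentially the same approach as the paper's: factor through $X/Z$, use $Z^\bot\subset\NA(X)$ plus James's theorem to get reflexivity, find a norming functional $y^*$ for the factored operator, pull back $T^*y^*$ into $Z^\bot$, and use the norm-attainment hypothesis there to produce~$x_0$. The only cosmetic difference is that the paper phrases the middle step as ``$\widetilde{T}$ attains its norm, hence $\widetilde{T}^*$ does, so pick $y^*\in S_{Y^*}$ with $\|\widetilde{T}^*y^*\|=\|T\|$'', whereas you obtain the same $y^*$ by applying Hahn--Banach at the image point $\widetilde{T}\xi$; your version is slightly more explicit but the argument is the same.
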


\begin{proof}
As $Z^\bot\subset \NA(X)$, it is immediate from James's theorem that $X/Z$ is reflexive (see the proof of \cite[Lemma~2.2]{Band-Godefroy}). As $Z\subset \ker T$, the operator $T$ factors through $X/Z$, that is, there is an operator $\widetilde{T}\mycolon X/Z\longrightarrow Y$ such that $T=\widetilde{T}\circ q$, and it is clear that $\|\widetilde{T}\|=\|T\|$ and that $\widetilde{T}$ is compact whenever $T$ is. Then, $\widetilde{T}$ attains its norm (it is compact defined on a reflexive space), so also the adjoint $\widetilde{T}^*$ attains its norm. That is, there is $y^*\in S_{Y^*}$ such that $\|\widetilde{T}^*y^*\|=\|T\|$. Now, the functional $x^*=T^*y^*=\bigl[q^*\widetilde{T}^*\bigr](y^*)\in X^*$ vanishes on $Z$, so it belongs to $Z^\bot\subset \NA(X)$. This implies that there is $x\in S_X$ such that
$$
|x^*(x)|=\|x^*\|=\bigl\|\bigl[q^*\widetilde{T}^*\bigr](y^*)\bigr\| =\bigl\|q^*(\widetilde{T}^*y^*)\bigr\| = \bigl\|\widetilde{T}^*(y^*)\bigr\|=\|T\|,
$$
where we have used the immediate fact that $q^*$ is an isometric embedding as $q$ is a quotient map. Therefore, $\|T\|=|[T^*y^*](x)|=|y^*(Tx)|$ and so $\|Tx\|=\|T\|$, as desired.
\end{proof}

Observe that the proposition above can also be  written in the following more suggestive form.

\begin{corollary}
Let $X$, $Y$ be Banach spaces and let $T\in \mathcal{L}(X,Y)$ be a compact operator. If $\,[\ker T]^\bot\subset \NA(X)$, then $T$ attains its norm.
\end{corollary}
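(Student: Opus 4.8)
The plan is to apply Proposition~\ref{prop:kerbotinNAimpliesNA} directly, with the distinguished subspace chosen to be $Z=\ker T$ itself. First I would observe that $Z=\ker T$ is a closed subspace of $X$, being the preimage of $\{0\}$ under the continuous operator~$T$, so it is an admissible choice in the proposition. The hypothesis of the corollary, namely $[\ker T]^\bot\subset\NA(X)$, is then word for word the assumption $Z^\bot\subset\NA(X)$ demanded by the proposition.

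It only remains to verify the containment $Z\subset\ker T$ required there, and with $Z=\ker T$ this is the trivial inclusion $\ker T\subset\ker T$. Hence every hypothesis of Proposition~\ref{prop:kerbotinNAimpliesNA} is satisfied, and its conclusion applied to this $Z$ asserts precisely that the compact operator $T$ attains its norm. This completes the argument.

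There is no real obstacle here: the corollary is a mere specialization of the proposition, so the only point worth recording is that the two statements are in fact equivalent, which justifies calling this the ``more suggestive form.'' Indeed, in the reverse direction, suppose $Z\subset\ker T$ and $Z^\bot\subset\NA(X)$ as in the proposition. Since the annihilator is order-reversing, $Z\subset\ker T$ gives $[\ker T]^\bot\subset Z^\bot\subset\NA(X)$, and the corollary applied to $T$ then yields norm attainment. Thus the generality of an arbitrary $Z\subset\ker T$ is recovered, and nothing is lost by phrasing the result solely in terms of $\ker T$.
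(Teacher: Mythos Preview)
Your argument is correct and matches the paper's intent: the corollary is stated there without proof as an immediate reformulation of Proposition~\ref{prop:kerbotinNAimpliesNA}, obtained exactly by taking $Z=\ker T$. Your additional observation that the two statements are in fact equivalent (via the order-reversing property of annihilators) is a nice bonus that the paper only alludes to with the phrase ``more suggestive form.''
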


The following obvious consequence gives a solution to Problem~\ref{problem:existence} in most Banach spaces.

\begin{corollary}\label{corollary-NA-two-dimensional-subspaces}
Let $X$ be a Banach space. If $\NA(X)$ contains two-dimensional subspaces, then $\NA^{(2)}(X,Y)$ is non-empty for any Banach space $Y$ of dimension at least two.
\end{corollary}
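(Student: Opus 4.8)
The plan is to exhibit the hypothesis as an instance of Proposition~\ref{prop:kerbotinNAimpliesNA} by constructing an explicit compact rank-two operator whose kernel has the prescribed annihilator. First I would fix a basis $f_1, f_2$ of a two-dimensional subspace of $\NA(X)$, together with two linearly independent vectors $y_1, y_2 \in Y$, which exist since $\dim Y \geq 2$, and define $T \in \mathcal{L}(X,Y)$ by $Tx = f_1(x)\,y_1 + f_2(x)\,y_2$.

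I would then verify that this $T$ does the job. Because $f_1, f_2$ are linearly independent, the map $x \mapsto (f_1(x), f_2(x))$ is surjective onto $\R^2$, so the range of $T$ is exactly $\Lin\{y_1, y_2\}$; thus $T$ has rank two and, being of finite rank, is compact. Setting $Z = \ker f_1 \cap \ker f_2$, we have $Z = \ker T$ (so in particular $Z \subseteq \ker T$), the subspace $Z$ is closed, and the standard identity for the annihilator of a finite intersection of kernels gives $Z^\bot = \Lin\{f_1, f_2\} \subseteq \NA(X)$.

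It then remains only to apply Proposition~\ref{prop:kerbotinNAimpliesNA} to this $Z$ and $T$: since $T$ is compact with $Z \subseteq \ker T$ and $Z^\bot \subseteq \NA(X)$, the operator $T$ attains its norm, whence $T \in \NA^{(2)}(X,Y)$ and the set is non-empty. I expect no genuine difficulty here, since all the analytic content sits in the preceding proposition; the only points requiring (routine) attention are the surjectivity of $x \mapsto (f_1(x), f_2(x))$, which guarantees that the rank is exactly two rather than one, and the annihilator computation $[\ker f_1 \cap \ker f_2]^\bot = \Lin\{f_1, f_2\}$. One could instead invoke Remark~\ref{remark:toproblemexistence}(a) to reduce to the single range space $Y = \ell_2^2$, but the construction above is uniform in $Y$ and makes such a reduction unnecessary.
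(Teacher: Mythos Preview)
Your proposal is correct and is precisely the argument the paper has in mind: the corollary is stated there as an ``obvious consequence'' of Proposition~\ref{prop:kerbotinNAimpliesNA} (equivalently, of the reformulation $[\ker T]^\bot\subset \NA(X)\Rightarrow T\in\NA(X,Y)$ for compact $T$), and your construction of $T=f_1\otimes y_1+f_2\otimes y_2$ with $[\ker T]^\bot=\Lin\{f_1,f_2\}\subset\NA(X)$ is exactly how one makes that consequence explicit.
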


What happens with Read's space $\mathcal{R}$? (Un)fortunately, Corollary~\ref{corollary-NA-two-dimensional-subspaces}   does not apply since $\NA(\mathcal{R})$ does not contain two-dimensional subspaces, as was shown by Rmoutil \cite[Theorem~4.2]{Rmoutil}. Actually, Rmoutil used the fact that if $Z$ is a finite-codimensional closed subspace of a Banach space $X$ such that $X/Z$ is strictly convex, then $Z^\bot\subset \NA(X)$ if and only if $Z$ is proximinal (see \cite[Lemma~3.1]{Rmoutil}). Then, he showed, for $X=\mathcal{R}$, that if $Z^\bot$ is contained in $\NA(\mathcal{R})$, then $\mathcal{R}/Z$ is strictly convex and so $Z$ is proximinal and hence it has codimension one. Actually, $\mathcal{R}^{**}$ is strictly convex \cite[Theorem~4]{KLM}, so all quotients of $\mathcal{R}$ are strictly convex.

What to do then with $\mathcal{R}$? Well, $\mathcal{R}$ is not smooth (this follows from the formula for the directional derivative of its norm given in \cite[Lemma~2.5]{Read}), so the following easy observation applies to it.

\begin{observation}\label{observation:smooth}
{\slshape If $X$ is a non-smooth Banach space, then $\NA^{(2)}(X,Y)$ is non-empty for any Banach space $Y$ with $\dim(Y)\geq 2$.}

Indeed, there are $x_0\in S_X$ and linearly independent $f,g\in S_{X^*}$ such that $f(x_0)=g(x_0)=1$. Consider two linearly independent vectors $y_1$ and $y_2$ of $S_Y$ and, replacing $y_2$ by $-y_2$ if necessary, let
$$
\alpha_0:=\|y_1+y_2\|=\max\{\|ay_1+by_2\|\mycolon |a|,|b|\leq 1\}.
$$
Now, define the operator $T\in \mathcal{L}(X,Y)$ by
$Tx=\frac{1}{\alpha_0}\bigl(f(x)y_1+g(x)y_2\bigr)$ for all $x\in X$ and observe that $T$ has rank two, that $\|T\|\leq 1$, and that $\|Tx_0\|=1$. Thus $T\in \NA^{(2)}(X,Y)$, giving the result.
\end{observation}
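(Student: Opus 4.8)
The plan is to build an explicit rank-two operator of the form $x \mapsto f(x)y_1 + g(x)y_2$ and to arrange, by a suitable normalization together with a choice of signs, that it attains its norm precisely at the non-smooth point. First I would invoke non-smoothness to extract the data: by definition there is a point $x_0 \in S_X$ that is not smooth, hence admits two distinct supporting functionals $f, g \in S_{X^*}$ with $f(x_0) = g(x_0) = 1$. These are automatically linearly independent, since $g = \lambda f$ would force $\lambda = g(x_0) = 1$ and thus $g = f$. Next I would pick any two linearly independent vectors $y_1, y_2 \in S_Y$, which exist because $\dim(Y) \geq 2$.

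For any $x \in B_X$ one has $|f(x)|, |g(x)| \leq 1$, so
$$
\|f(x)y_1 + g(x)y_2\| \leq \max\bigl\{\|ay_1 + by_2\| \mycolon |a|,|b| \leq 1\bigr\}.
$$
The function $(a,b) \mapsto \|ay_1 + by_2\|$ is convex, so its maximum over the square is attained at a vertex; by the symmetry of the values $\|{\pm}y_1 {\pm} y_2\|$ this maximum equals $\max\{\|y_1+y_2\|, \|y_1-y_2\|\}$. Replacing $y_2$ by $-y_2$ if necessary, I may assume $\|y_1 + y_2\| \geq \|y_1 - y_2\|$, so the maximum is $\alpha_0 := \|y_1 + y_2\|$, which is strictly positive since $y_1,y_2$ remain linearly independent. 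I would then define $T \in \mathcal{L}(X,Y)$ by $Tx = \alpha_0^{-1}\bigl(f(x)y_1 + g(x)y_2\bigr)$.

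It remains to verify the three required properties. The rank of $T$ is exactly two because $f, g$ are linearly independent and $y_1, y_2$ are linearly independent. The displayed estimate gives $\|T\| \leq 1$. Finally, $Tx_0 = \alpha_0^{-1}\bigl(f(x_0)y_1 + g(x_0)y_2\bigr) = \alpha_0^{-1}(y_1 + y_2)$ has norm $1$, so in fact $\|T\| = 1$ and $T$ attains its norm at $x_0$; hence $T \in \NA^{(2)}(X,Y)$. The only point needing care---and the sole place the argument could fail without the preparatory sign change---is ensuring that the vertex $(1,1)$ of the square realizes the maximum, which is exactly what the choice $\|y_1+y_2\| \geq \|y_1-y_2\|$ secures.
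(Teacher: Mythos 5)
Your proof is correct and follows essentially the same route as the paper's: extract two distinct supporting functionals at a non-smooth point, send them to a linearly independent pair in $S_Y$ with the sign of $y_2$ adjusted so that the vertex $(1,1)$ realizes the maximum of $\|ay_1+by_2\|$ over the unit square, and normalize by $\alpha_0=\|y_1+y_2\|$. You merely spell out two details the paper leaves implicit (the automatic linear independence of $f,g$ and the convexity argument locating the maximum at a vertex), which is fine.
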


Observation~\ref{observation:smooth} solves Problem~\ref{problem:existence} for $\mathcal{R}$. But, are the already presented results applicable to solve the problem for all Banach spaces? The answer is no since we may construct a smooth renorming $\widetilde{\mathcal{R}}$ of $\mathcal{R}$ such that $\NA(\widetilde{\mathcal{R}})=\NA(\mathcal{R})$ \cite[Example~12]{KLM}, and neither Corollary~\ref{corollary-NA-two-dimensional-subspaces} nor Observation~\ref{observation:smooth} apply. Nevertheless, there is something these two results have in common: in both cases, the set of norm attaining functionals contains non-trivial cones. This also happens in $\NA(\widetilde{\mathcal{R}})$ (as it coincides with $\NA(\mathcal{R})$), and this will be the key to obtain the main new existence result about norm attaining rank-two operators.

\begin{theorem} \label{Theo2-implication}
Let $X$ be a real Banach space, let $f_1, f_2 \in S_{X^*}$ be linearly independent, $Z = \ker f_1 \cap \ker f_2$ and $\cone\{f_1, f_2\} \subset \NA(X)$. Then, for every real two-dimensional normed space $E$ there is a norm attaining surjective operator $T \mycolon X \longrightarrow E$ with $\ker T = Z$.
\end{theorem}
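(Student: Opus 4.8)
The plan is to pass to the two-dimensional quotient and recast everything as an inscribed-ellipse problem. Since $f_1,f_2$ are linearly independent, $Z=\ker f_1\cap\ker f_2$ has codimension two, so $V:=X/Z$ is a two-dimensional normed space; write $q\colon X\to V$ for the quotient map and $K:=q(B_X)$. Any operator $T\colon X\to E$ with $\ker T=Z$ factors as $T=\widetilde T\circ q$ with $\widetilde T\colon V\to E$ linear and injective, hence (both spaces being two-dimensional) a surjective isomorphism, and conversely every such $\widetilde T$ produces an admissible $T$; moreover $\|T\|=\|\widetilde T\|$. The definition of the quotient norm gives $U_V\subseteq K\subseteq B_V$, so $K$ is a convex symmetric set differing from $B_V$ only in boundary points, and a short computation shows that $T=\widetilde T\circ q$ attains its norm if and only if $\widetilde T$ attains its norm at some point of $K\cap S_V$. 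Finally, identifying $V^*$ with $Z^\bot=\Lin\{f_1,f_2\}$, a functional $\phi\in V^*$ corresponds to a norm attaining functional on $X$ exactly when its contact face $F(\phi)=\{v\in S_V\colon \phi(v)=\|\phi\|\}$ meets $K$; thus the hypothesis $\cone\{f_1,f_2\}\subseteq\NA(X)$ says precisely that $F(\phi)\cap K\neq\emptyset$ for every $\phi\in\cone\{f_1,f_2\}$.

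Next I would isolate an arc of norm attaining points. As $\phi$ runs through $\cone\{f_1,f_2\}$ its contact points trace, by monotonicity of the normal map of the planar body $B_V$, an arc $\Gamma\subseteq S_V$ joining a contact point of $f_1$ to one of $f_2$; every point of $\Gamma$ exposed by an open-cone functional has a one-point contact face and therefore lies in $K$. The key geometric observation is that, because $f_1$ and $f_2$ are not proportional, along $\Gamma$ the outer normal turns by a positive angle, so $\Gamma$ is not a segment and its turning is a non-zero positive measure. Hence $\Gamma$ contains a point $k_0$, exposed by an open-cone functional (so $k_0\in K$), at which $B_V$ has positive or infinite curvature; the flat faces carry no turning and are thus avoided. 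At such a $k_0$ there is an origin-symmetric ellipse $\mathcal E\supseteq B_V$ tangent to $S_V$ at $k_0$.

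To finish I would reduce the target to the Hilbert plane. By Remark~\ref{remark:toproblemexistence}(a)—whose construction composes $T$ with a rotation and a norm-one isomorphism, and therefore preserves both surjectivity and the kernel—it suffices to produce the desired operator for $E=\ell_2^2$. Let $\widetilde T\colon V\to\ell_2^2$ be the linear isomorphism carrying the ellipse $\mathcal E$ onto the closed Euclidean unit disc. Then $\|\widetilde T v\|_2=\|v\|_{\mathcal E}$ is the Minkowski gauge of $\mathcal E$, so $\|\widetilde T\|=\sup_{v\in B_V}\|v\|_{\mathcal E}=1$ because $B_V\subseteq\mathcal E$, and this supremum is attained at $k_0\in S_V\cap\partial\mathcal E\subseteq K$. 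Setting $T=\widetilde T\circ q$ we obtain a surjective operator with $\ker T=Z$ attaining its norm at any norm-one lift of $k_0$; applying Remark~\ref{remark:toproblemexistence}(a) then yields the statement for an arbitrary two-dimensional $E$.

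I expect the crux to be the middle step: guaranteeing that the contact point can be placed inside $K$, i.e.\ that the circumscribed ellipse touches $S_V$ at a norm attaining point. The easy inclusions $U_V\subseteq K\subseteq B_V$ and the ``face meets $K$'' reformulation reduce this to a purely planar statement, but one must still rule out the degenerate possibility that $B_V$ is flatter than every ellipse all along $\Gamma$; the turning argument—an arc across which the normal genuinely varies cannot be uniformly flat—is what forces a suitable $k_0$, and it is precisely here that the linear independence of $f_1,f_2$ is used in an essential way. The remaining verifications (the factorization through $V$, the description of $K$, and the norm computation for $\widetilde T$) are routine.
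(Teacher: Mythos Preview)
Your approach is genuinely different from the paper's and is geometrically appealing: you reduce to $E=\ell_2^2$ via Remark~\ref{remark:toproblemexistence}(a) (which does indeed preserve the kernel), then recast the problem as finding an origin-symmetric ellipse circumscribing $B_V$ and tangent at a point of $K\cap S_V$. The quotient setup, the description of $K$, and the norm computation for $\widetilde T$ are all correct. However, the proposal has a genuine gap at precisely the step you flag as the crux.

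The assertion that the arc $\Gamma$ must contain a point $k_0$ which is simultaneously in $K$ and admits a circumscribed tangent ellipse is not established by the ``turning argument'' as you state it. The existence of such an ellipse at a smooth point $k_0$ is equivalent to a \emph{lower quadratic bound} on the boundary there, i.e., $\psi(y)\ge c\,y^2$ in tangent--normal coordinates; this fails at any point where the boundary is locally flatter than a parabola. Your argument that ``flat faces carry no turning'' only rules out genuine line segments. It does not rule out a $C^1$ boundary whose curvature \emph{measure} along $\Gamma$ is singular continuous: then the turning is positive, there are no flat faces and no vertices, yet at Lebesgue-almost every point the pointwise curvature is zero, and at points of the support of the measure you would still need to show a two-sided lower Dini bound $\liminf_{h\to0^\pm}(\theta(s_0+h)-\theta(s_0))/h>0$. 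That inequality is not automatic and is exactly what your sketch omits. In the language of Section~\ref{sect:naoperators}, you are claiming that some $\phi$ in the open cone (viewed in $V^*$) has a mate; but as the example $V=\ell_\infty^2$, $\phi=e_1^*\in S_{\ell_1^2}$ shows, individual functionals can fail to have mates, so one really needs an argument that produces a mate \emph{somewhere} in the given cone.

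By contrast, the paper never appeals to curvature. It works with an arbitrary target $E$ and builds the operator explicitly as $T=T_\tau\circ R\circ q$, splitting into two cases. If some $\widetilde x_i$ is a non-smooth point of $S_{X/Z}$ (your ``infinite curvature'' situation), an elementary shrinking lemma (Lemma~\ref{prop2-smooth-point-and-an-angle}) produces $T$ attaining its norm at $x_i$. If both $\widetilde x_1,\widetilde x_2$ are smooth, the paper does \emph{not} try to locate a good contact point on $\Gamma$; instead it arranges, via Lemma~\ref{prop1-arbitrary-point}, that any point of $S_{X/Z}$ where $T_\tau\circ R$ attains its norm must lie in $\cone\{\widetilde x_1,\widetilde x_2\}$, and then invokes Lemma~\ref{prop3-tangents-to-an-arc} to show that the supporting functional at that point lies in $\cone\{\widetilde f_1,\widetilde f_2\}$, hence pulls back to a norm attaining functional on $X$. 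This sidesteps entirely the second-order boundary analysis that your ellipse approach requires. If you want to salvage your route, the missing lemma to prove is: for every two-dimensional normed space $V$ and every non-degenerate cone $C\subset V^*$, some $\phi\in S_{V^*}\cap C$ has a mate. This is plausible, but it is the whole difficulty, and your sketch does not supply it.
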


Before providing the proof of this result, let us give some consequences and comments.

Observe that Theorem~\ref{Theo2-implication} implies the following sufficient condition for the existence of norm attaining operators of rank two.

\begin{cor}\label{cor4.6}
Let $X$ be a Banach space. If there exist two linearly independent $f,g\in X^*$ such that $\cone\{f,g\}\subset \NA(X)$, then $\NA^{(2)}(X,Y)\neq \emptyset$ for every Banach space $Y$ of dimension at least two.
\end{cor}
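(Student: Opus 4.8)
The corollary is immediate from Theorem~\ref{Theo2-implication}: given $Y$ with $\dim Y\ge 2$, fix any two‑dimensional subspace $E\subseteq Y$, apply the theorem to obtain a norm attaining surjection $T\mycolon X\longrightarrow E$, and compose with the isometric inclusion $j\mycolon E\hookrightarrow Y$; since $\|jTx\|=\|Tx\|$ for all $x$, the rank‑two operator $jT$ attains its norm, so $\NA^{(2)}(X,Y)\neq\emptyset$. The real content is therefore Theorem~\ref{Theo2-implication}, and I describe a plan for it. First I would pass to the two‑dimensional quotient. As $Z=\ker f_1\cap\ker f_2$ has codimension two, $V:=X/Z$ is two‑dimensional, the quotient map $q\mycolon X\longrightarrow V$ satisfies $\overline{q(B_X)}=B_V$, and $q^*$ is an isometry of $V^*$ onto $Z^\bot=\Lin\{f_1,f_2\}$. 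Any candidate $T$ with $\ker T=Z$ factors as $T=\widetilde T\circ q$ with $\widetilde T\mycolon V\longrightarrow E$ linear, where $T$ is a surjection with $\ker T=Z$ exactly when $\widetilde T$ is an isomorphism, and $\|T\|=\|\widetilde T\|$. Identifying $V^*$ with $Z^\bot$, the hypothesis says that the set $C:=\cone\{f_1,f_2\}\cap S_{V^*}$ — a genuine closed arc of positive length, since $f_1,f_2$ are linearly independent unit vectors — consists of norm attaining functionals.

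The key reduction is a dual criterion for norm attainment. Because $E$ is finite‑dimensional, $T$ is compact and $\psi\mapsto\|T^*\psi\|$ attains its maximum over $S_{E^*}$; a short argument (one direction uses a supporting functional at the norming point, the converse is a one‑line estimate) shows that $T$ attains its norm if and only if this maximum is attained at some $\psi_0$ with $T^*\psi_0\in\NA(X)$. Writing $S:=T^*\mycolon E^*\longrightarrow V^*$ (an isomorphism onto $Z^\bot$, which is equivalent to $\ker T=Z$) and normalizing $\|T\|=1$, the problem becomes purely two‑dimensional and convex‑geometric: produce a linear isomorphism $S$ with $S(B_{E^*})\subseteq B_{V^*}$ such that some extreme point $\psi_0$ of $B_{E^*}$ is carried into the arc $C$. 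Indeed $S(B_{E^*})\subseteq B_{V^*}$ gives $\|S\|\le 1$, while $S\psi_0\in C\subseteq S_{V^*}$ forces $\|S\|=1=\|S\psi_0\|$ with $S\psi_0\in\cone\{f_1,f_2\}\subseteq\NA(X)$, so the dual criterion applies and, after undoing the identifications, yields the desired $T=\widetilde T\circ q$.

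Finally I would solve this inscribing problem concretely. Since $\partial B_{V^*}$ has at most countably many corners, the relative interior of $C$ contains a smooth point $c_0$ with a unique supporting line; choose an exposed extreme point $e_0$ of $B_{E^*}$ together with its supporting functional. I define $S$ so that $Se_0=c_0$ and $S$ sends the supporting line of $B_{E^*}$ at $e_0$ to the supporting line of $B_{V^*}$ at $c_0$; these conditions pin $S$ down up to a single shear parameter, along which I squash the image into a thin ``needle'' around the chord $[-c_0,c_0]$. For a sufficiently small squash the needle is pointed sharply at $\pm c_0$ and hugs the (interior) chord, so $S(B_{E^*})\subseteq B_{V^*}$ with boundary contact only at $\pm c_0=\pm Se_0$, exactly as required; non‑degeneracy of the shear keeps $S$ an isomorphism, hence $T$ surjective with kernel $Z$.

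The main obstacle is making this fitting step uniform over an \emph{arbitrary} two‑dimensional range. One cannot in general touch $\partial B_{V^*}$ at a prescribed point: a smooth, positively curved spot of $B_{V^*}$ rejects a ``too round'' image, so the construction must be steered to favourable contacts. My way around this is to exploit that smooth points are dense in the arc $C$ and that $B_{E^*}$ always possesses an exposed extreme point which is either a corner or carries positive curvature, so that the squashed needle is sharp enough to fit; the technical heart is the local second‑order comparison of the two boundaries that certifies $S(B_{E^*})\subseteq B_{V^*}$. A more robust, curvature‑free alternative would replace the explicit needle by a compactness–connectedness argument: take a \emph{maximal} inscribed image $S(B_{E^*})\subseteq B_{V^*}$, observe its extreme‑point contacts with $\partial B_{V^*}$ form a nonempty symmetric set, and let these contacts sweep continuously around $\partial B_{V^*}$ as the inscription is rotated, so that over a full turn they necessarily meet the arc $C$.
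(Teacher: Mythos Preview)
Your one-line derivation of the corollary from Theorem~\ref{Theo2-implication} (pass to a two-dimensional subspace $E\subset Y$, apply the theorem, compose with the isometric inclusion) is correct and is exactly what the paper does.

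Your sketch of Theorem~\ref{Theo2-implication} itself is where the content lies, and here your route differs from the paper's and is not complete. The dual reduction is fine: with $S=\widetilde T^{\,*}\mycolon E^*\to V^*\cong Z^\bot$, the task is precisely to produce a linear isomorphism with $S(B_{E^*})\subset B_{V^*}$ and a boundary contact inside the arc $C=\cone\{\tilde f_1,\tilde f_2\}\cap S_{V^*}$. The gap is the inscribing step. You fix a smooth target $c_0\in C$ and try to squash $B_{E^*}$ to a needle touching $\partial B_{V^*}$ at~$\pm c_0$; but smoothness only gives $\phi_V(y')=o(|y'|)$ and does not rule out super-quadratic behaviour such as $\phi_V(y')\sim|y'|^{3/2}$, in which case no linear image of a $C^2$ boundary can be inscribed with contact at~$c_0$, however hard you squash. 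You notice this, but neither of your proposed escapes is carried out: the ``corner or positive curvature at $e_0$, finite upper curvature at $c_0$'' line would need an Alexandrov-type a.e.\ twice-differentiability argument to locate a good $c_0$ inside the arc, and the ``rotate a maximal inscription and use connectedness'' alternative is only a picture.

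The paper sidesteps the curvature matching entirely by working in the primal and, crucially, by \emph{not} aiming at a prescribed contact point. It builds $\widetilde T=T_\tau\circ R\mycolon X/Z\to E$ so that outside the primal arc $\cone\{\tilde x_1,\tilde x_2\}\cap S_{X/Z}$ one has $\|\widetilde T v\|<\|\widetilde T\|$; hence the norm is attained \emph{somewhere} on that arc, wherever the geometry of $E$ happens to allow it. A short planar lemma (Lemma~\ref{prop3-tangents-to-an-arc}) then guarantees that any supporting functional at such a point lies in $\cone\{\tilde f_1,\tilde f_2\}$, which pushes forward under $q^*$ into $\cone\{f_1,f_2\}\subset\NA(X)$ and gives norm attainment of $T$ by your own dual criterion. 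The squashing map $T_\tau$ plays the role of your shear, but because the contact is allowed to float along the arc the argument needs only first-order information (smoothness), never a second-order comparison. A clean case split---whether the points $\tilde x_i=q(x_i)$ are smooth in $X/Z$---handles the two regimes; your approach instead appeals to generic smoothness on the dual side.
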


We do not know whether the condition is necessary as well, and we don't know any Banach space that fails it.

\begin{problem}
Does $\NA(X)$ contain non-trivial cones for every infinite-dimensional Banach space $X$?
\end{problem}

\begin{problem}
Let $X$ be a Banach space and suppose that $\NA^{(2)}(X,Y)\neq \emptyset$ for every Banach space $Y$ of dimension at least two. Does this imply that $\NA(X)$ contains a non-trivial cone?
\end{problem}

As promised before we stated Theorem~\ref{Theo2-implication}, this result solves the problem of the existence of rank-two norm attaining operators for~$\widetilde{\mathcal{R}}$.

\begin{example}\label{example-R-widetilde_R}
{\slshape The Read space $\mathcal{R}$ given in \cite{Read} and its smooth renorming $\widetilde{\mathcal{R}}$ given in \cite[Example~12]{KLM}, satisfy that their set of norm attaining functionals contains non-trivial cones (but no non-trivial subspaces). Therefore, for every Banach space $Y$ of dimension at least two, both $\NA^{(2)}(\mathcal{R},Y)$ and $\NA^{(2)}(\widetilde{\mathcal{R}},Y)$ are non-empty.}

Indeed, as $\mathcal{R}$ is not smooth, taking linearly independent $f_1, f_2\in \mathcal{R}^*$ and $x_0\in S_{\mathcal{R}}$ such that $f_1(x_0)=1=f_2(x_0)$, it is immediate that $\cone\{f_1,f_2\}\subset \NA(\mathcal{R})$. For the space $\widetilde{\mathcal{R}}$, we just have to observe that $\NA(\widetilde{\mathcal{R}})=\NA(\mathcal{R})$, as shown in \cite[Example~12]{KLM}.
\end{example}

It is now time to present the proof of Theorem~\ref{Theo2-implication}. We first need some preliminary results. We recall that we denote the open unit ball of a Banach space $X$ by~$U_X$.

\begin{lemma} \label{prop1-arbitrary-point}
Let $E$ be a two-dimensional normed space, let $e_1 \in S_E$, $e_1^* \in S_{E^*}$ such that $e_1^*(e_1)=1$, and let $e_2 \in  S_E \cap \ker e_1^*$. For $0<\tau<1$, denote by $T_{\tau} \mycolon  E \longrightarrow E$ the norm-one linear operator such that $T_{\tau} (e_1) = e_1$ and $T_{\tau}(e_2) = \tau e_2$. Then, for every compact subset $K \subset E$ such that $\sup \Abs{e_1^*(K)} < 1$ there is $0<\tau <1$ such that $T_{\tau}(K) \subset U_E$.
\end{lemma}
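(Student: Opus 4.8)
The plan is to exploit the fact that $T_\tau$ is an explicit convex-type combination of the identity and the rank-one projection onto $e_1$. Writing an arbitrary $x \in E$ as $x = e_1^*(x)\,e_1 + \bigl(x - e_1^*(x)\,e_1\bigr)$, one checks that $x - e_1^*(x)\,e_1 \in \ker e_1^*$, hence is a scalar multiple of $e_2$; applying $T_\tau$ and recalling $T_\tau e_1 = e_1$, $T_\tau e_2 = \tau e_2$ then gives the key identity
$$
T_\tau x = \tau x + (1-\tau)\,e_1^*(x)\,e_1 \qquad (x \in E).
$$
From here the triangle inequality and $\|e_1\|=1$ yield the pointwise estimate $\|T_\tau x\| \le \tau\|x\| + (1-\tau)\,|e_1^*(x)|$. (As a by-product, combined with $|e_1^*(x)|\le\|x\|$ this re-proves that $\|T_\tau\|=1$, matching the phrasing of the statement.)

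Next I would turn the two quantities on the right-hand side into uniform bounds over $K$ by invoking compactness. Since $x \mapsto \|x\|$ and $x \mapsto |e_1^*(x)|$ are continuous, they attain their maxima on the compact set $K$; set $M := \max_{x \in K}\|x\|$ and $\delta := \max_{x\in K}|e_1^*(x)|$. The hypothesis $\sup \Abs{e_1^*(K)} < 1$ is precisely $\delta < 1$, and $\delta \le M < \infty$ because $\|e_1^*\| = 1$. The estimate above then reads $\|T_\tau x\| \le \tau M + (1-\tau)\delta = \delta + \tau(M - \delta)$ for every $x \in K$.

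It remains to choose $\tau$. If $M = \delta$ the bound is already $\delta < 1$ for any $\tau \in (0,1)$; otherwise any $\tau$ with $0 < \tau < \frac{1-\delta}{M-\delta}$ forces $\delta + \tau(M-\delta) < 1$, and such $\tau \in (0,1)$ exists since the fraction on the right is strictly positive. For this $\tau$ we obtain $\|T_\tau x\| < 1$ for all $x \in K$, that is, $T_\tau(K) \subset U_E$, as claimed.

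I do not expect a genuine obstacle here: the whole argument hinges on spotting the identity $T_\tau x = \tau x + (1-\tau)e_1^*(x)e_1$, which exhibits $T_\tau$ as a contraction dragging every point a definite fraction of the way toward the line $\R e_1$, while the condition $\sup\Abs{e_1^*(K)}<1$ guarantees that the limiting positions stay strictly inside $U_E$. The only point needing slight care is the degenerate case $M=\delta$ when picking the threshold for $\tau$.
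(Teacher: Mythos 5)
Your proof is correct. The argument rests on the explicit identity $T_\tau = \tau\,\Id + (1-\tau)\,e_1^*\otimes e_1$, from which you extract the uniform bound $\|T_\tau x\|\le \delta+\tau(M-\delta)$ on $K$ and then an explicit admissible range of $\tau$. The paper reaches the same conclusion by a softer, non-quantitative route: it sets $\tau_n=\tfrac1n$, observes that $T_{\tau_n}$ converges pointwise --- hence uniformly on the compact set $K$ --- to the rank-one operator $e_1^*\otimes e_1$, and notes that $\{e_1^*(x)e_1\mycolon x\in K\}$ is a compact subset of the open set $U_E$ precisely because $\sup\Abs{e_1^*(K)}<1$, so $T_{\tau_n}(K)\subset U_E$ for $n$ large. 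Both arguments express the same geometric fact, namely that $T_\tau$ pulls $K$ a definite fraction of the way toward a compact subset of the interior of $B_E$; yours is more elementary and produces a concrete threshold for $\tau$ (and re-proves $\|T_\tau\|=1$ along the way), while the paper's trades the computation for an appeal to uniform convergence on compacta and the openness of $U_E$. Your treatment of the degenerate case $M=\delta$ and of the constraint $\tau<1$ is careful and correct; note only that $\delta\le M$ is automatic from $\|e_1^*\|=1$, as you observe.
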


\begin{proof}
Let $\tau_n = \frac{1}{n}$. Then the operators $T_{\tau_n}$ converge pointwise to the operator $T = e_1^* \otimes e_1$. Now, pointwise convergence on $E$ implies uniform convergence on $K$. Since, by hypothesis, $U_E$ is an open neighborhood of $T(K) = \{e_1^*(x)e_1 \mycolon x \in K\}$, there is $n \in \N$ such that  $T_{\tau_n}(K) \subset U_E$.
\end{proof}

\begin{lemma} \label{prop2-smooth-point-and-an-angle}
Under the conditions of the previous lemma, let additionally $e_1$ be a smooth point of $S_E$, and let $h_1, h_2 \in E^*$ be two linearly independent functionals such that $e_1^* = \frac12(h_1 + h_2)$ and $h_1(e_1) = h_2(e_1) = 1$. Denote
$$
A = \{x \in E \mycolon \max\{\Abs{h_1(x)}, \Abs{h_2(x)}\} \le 1\}.
$$
Then, there is $0<\tau<1$ such that $T_{\tau}(A) \subset B_E$.
\end{lemma}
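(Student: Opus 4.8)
The plan is to use that $E$ is two-dimensional to describe $A$ explicitly as a parallelogram, and then to combine the linearity of $T_\tau$ with the convexity of $B_E$ so that the whole statement collapses to a single scalar inequality at the vertices.

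First I would pin down the geometry of $A$. Since $h_1,h_2$ are linearly independent functionals on the two-dimensional space $E$, the map $x\mapsto(h_1(x),h_2(x))$ is a linear isomorphism of $E$ onto $\R^2$ carrying $A$ onto the square $[-1,1]^2$; hence $A$ is the bounded, centrally symmetric parallelogram spanned by the four points $x$ with $(h_1(x),h_2(x))\in\{1,-1\}^2$. The hypotheses $h_1(e_1)=h_2(e_1)=1$ single out $e_1$ as the vertex over $(1,1)$, and $-e_1$ as the opposite one. For the remaining vertex $p$ over $(1,-1)$ one has $e_1^*(p)=\tfrac12\bigl(h_1(p)+h_2(p)\bigr)=0$, so $p\in\ker e_1^*$; as $\ker e_1^*$ is one-dimensional and contains $e_2$, this forces $p=a\,e_2$ for some scalar $a\neq0$ (note $p\neq0$, since it maps to $(1,-1)$). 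Thus $A=\co\{e_1,-e_1,p,-p\}$ with $p=a e_2$.

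Next I would apply $T_\tau$. Because $T_\tau$ fixes $e_1$ and scales $e_2$, and hence $p$, by $\tau$, and because a linear map sends the convex hull of finitely many points to the convex hull of their images, $T_\tau(A)=\co\{e_1,-e_1,\tau p,-\tau p\}$. Since $B_E$ is convex, $T_\tau(A)\subset B_E$ holds precisely when these four vertices lie in $B_E$. The vertices $\pm e_1$ lie on $S_E$ by hypothesis, so the only remaining requirement is $\norm{\tau p}=\tau\Abs{a}\le1$. Taking $\tau=\min\{1/2,\,1/\Abs{a}\}$ gives $0<\tau<1$ and $\tau\Abs{a}\le1$, which finishes the argument.

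The only real content is the first step: recognizing that the two vertices of $A$ off the line $\R e_1$ are forced into $\ker e_1^*=\R e_2$, which is exactly the direction that $T_\tau$ contracts, so that compressing $A$ amounts to shrinking a single vertex. If one instead follows the template of Lemma~\ref{prop1-arbitrary-point} --- splitting $A$ into a central compact piece on which $\sup\Abs{e_1^*}<1$ (handled by that lemma) and two corner caps at $\pm e_1$ --- then those corners become the delicate part, and it is there that the smoothness of $e_1$ would naturally be invoked to keep the compressed caps inside $B_E$. The convex-hull viewpoint above avoids the corners altogether, and so I expect it not to require smoothness; if that is right, the hypothesis that $e_1$ be smooth is presumably carried over only because it is available in the intended application.
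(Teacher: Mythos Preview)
Your argument is correct and genuinely different from the paper's. The paper splits $A$ into a central slab $\{x\in A: |e_1^*(x)|\le 1-\delta\}$, handled by Lemma~\ref{prop1-arbitrary-point}, and two caps $A_\delta$ near $\pm e_1$; it then invokes smoothness of $e_1$ to ensure that the parallelogram $A$ lies inside $B_E$ near the vertex $e_1$ (so that $T_\tau(A_\delta)\subset A_\delta\subset B_E$). Your convex-hull computation bypasses this decomposition entirely: once you observe that the two ``off-axis'' vertices of $A$ satisfy $e_1^*(\pm p)=0$ and hence lie on the line $\R e_2$, the image $T_\tau(A)=\co\{\pm e_1,\pm\tau p\}$ is contained in $B_E$ as soon as $\tau|a|\le 1$, a single scalar condition. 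This is shorter, avoids the appeal to Lemma~\ref{prop1-arbitrary-point}, and --- as you correctly suspect --- shows that the smoothness hypothesis on $e_1$ is not needed for the lemma itself; it is only used later, in Case~2 of the proof of Theorem~\ref{Theo2-implication}, where $e_1$ is \emph{chosen} smooth. The paper's approach, on the other hand, illustrates the general ``compress the middle, control the corners'' mechanism that also drives Case~1 of that proof.
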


\begin{proof}
Since $e_1$ is a smooth point of $B_E$, it follows by geometrical reasoning in the plane that there is a neighborhood $V$ of $e_1$ such that $A \cap V \subset B_E$ (i.e., the parallelogram $A$ touches $S_E$ at $e_1$ from the inside of $B_E$). Note that $e_1$ is a strongly exposed point of~$A$; it is strongly exposed by $e_1^*= \frac12(h_1+h_2)$.  Hence, there is some $\delta > 0$ such that
$$
A_\delta := \{x \in A \mycolon \Abs{e_1^*(x)}  > 1 - \delta\} \subset A\cap V\subset B_E.
$$
Let us apply Lemma~\ref{prop1-arbitrary-point} to $K = A \setminus A_\delta = \{x \in A \mycolon \Abs{e_1^*(x)}  \le 1 - \delta\}$. We obtain some  $0<\tau<1$ so that $T_{\tau}(K) \subset U_E$. Since $T_{\tau}(A_\delta) \subset A_\delta  \subset B_E$, this gives us the desired inclusion $T_{\tau}(A) \subset B_E$.
\end{proof}

\begin{lemma} \label{prop3-tangents-to-an-arc}
Let $Y$ be a two-dimensional normed space, $x_1, x_2 \in S_Y$ be linearly independent smooth points, and let the corresponding supporting functionals $x_1^*,  x_2^* \in S_{Y^*}$, $x_1^*(x_1) = x_2^*(x_2) = 1$, be also linearly independent. Let $y \in S_Y$ be of the form $y = a_1x_1 + a_2 x_2$ with $a_i > 0$, $i=1, 2$. Then, every supporting functional $f\in S_{Y^*}$ at $y$ belongs to $\cone\{x_1^*, x_2^*\}$.
\end{lemma}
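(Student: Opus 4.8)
The plan is to stay inside the two-dimensional spaces $Y$ and $Y^*$ and to turn the cone membership into a sign condition on $2\times 2$ determinants. Fix a basis of $Y$ together with its dual basis of $Y^*$, and for two elements of $Y$ (or of $Y^*$) write $\det(\cdot,\cdot)$ for the determinant of their coordinate vectors in the respective basis. Since $x_1^*,x_2^*$ are linearly independent, they form a basis of $Y^*$, so $f=\lambda_1x_1^*+\lambda_2x_2^*$ for unique scalars $\lambda_1,\lambda_2$, and $f\in\cone\{x_1^*,x_2^*\}$ means precisely $\lambda_1,\lambda_2\ge0$. By Cramer's rule $\lambda_1=\det(f,x_2^*)/\det(x_1^*,x_2^*)$ and $\lambda_2=\det(x_1^*,f)/\det(x_1^*,x_2^*)$, so it suffices to understand the signs of these three determinants.

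The engine of the argument is a quantitative form of the monotonicity of the duality map. If $z,z'\in S_Y$ and $g,g'\in S_{Y^*}$ are supporting functionals, i.e.\ $g(z)=g'(z')=1$, consider the linear map $A\mycolon Y\to\R^2$, $A(w)=(g(w),g'(w))$. In our bases its matrix has rows the coordinate vectors of $g$ and $g'$, so $\det A=\det(g,g')$; on the other hand $A(z),A(z')$ are the columns of $\left(\begin{smallmatrix}g(z)&g(z')\\ g'(z)&g'(z')\end{smallmatrix}\right)$. Hence, from $\det(A(z),A(z'))=\det A\,\det(z,z')$,
\[
\det(g,g')\,\det(z,z')=g(z)g'(z')-g(z')g'(z)=1-g(z')g'(z)\ge0,
\]
the inequality holding because $g(z'),g'(z)\in[-1,1]$ as pairings of unit elements.

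I would then apply this identity to the three supporting pairs $(x_1,x_1^*)$, $(x_2,x_2^*)$ and $(y,f)$. From $y=a_1x_1+a_2x_2$ and multilinearity, $\det(x_1,y)=a_2\det(x_1,x_2)$ and $\det(y,x_2)=a_1\det(x_1,x_2)$; as $a_1,a_2>0$ and $x_1,x_2$ are independent, both are nonzero and carry the sign of $\det(x_1,x_2)$. Applying the identity to $(x_1,x_1^*),(x_2,x_2^*)$ gives $\det(x_1^*,x_2^*)\det(x_1,x_2)=1-x_1^*(x_2)\,x_2^*(x_1)\ge0$; since both determinants are nonzero (linear independence of the pairs), this product is in fact strictly positive, so $\det(x_1^*,x_2^*)$ has the same sign, say $\sigma$, as $\det(x_1,x_2)$. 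Finally the identity for $(x_1,x_1^*),(y,f)$ yields $\det(x_1^*,f)\det(x_1,y)\ge0$; since $\det(x_1,y)$ has sign $\sigma$, this forces $\det(x_1^*,f)$ to vanish or to have sign $\sigma$, whence $\lambda_2=\det(x_1^*,f)/\det(x_1^*,x_2^*)\ge0$. The symmetric pair $(y,f),(x_2,x_2^*)$ gives $\lambda_1\ge0$ in the same way, and therefore $f\in\cone\{x_1^*,x_2^*\}$.

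I expect the only delicate point to be the sign bookkeeping rather than any single hard step. It is worth noticing that the possibly degenerate cases, where one of the determinants vanishes (that is, $f$ is proportional to $x_1^*$ or to $x_2^*$), need no separate treatment: the inequalities already force the corresponding $\lambda_i$ to be $\ge0$. Smoothness of $x_1,x_2$ is not actually used in the inequalities; it serves only to guarantee that the supporting functionals $x_1^*,x_2^*$ attached to $x_1,x_2$ are uniquely determined.
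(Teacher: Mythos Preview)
Your proof is correct and takes a genuinely different route from the paper's. The paper argues geometrically: it introduces the point $x_3$ where the two tangent lines $\{x_1^*=1\}$ and $\{x_2^*=1\}$ meet, observes that $y$ lies in the triangle $\Delta$ with vertices $x_1,x_2,x_3$, and that the supporting line $\{f=1\}$ at $y$ therefore separates $x_3$ from $x_1,x_2$. Writing $f=c_1x_1^*+c_2x_2^*$ and plugging in $x_1,x_2,x_3$ yields three linear inequalities in $c_1,c_2$ from which $c_1,c_2\ge0$ follows once one knows $x_1^*(x_2)\ne1$ and $x_2^*(x_1)\ne1$ (this is where the paper actually uses smoothness).

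Your approach replaces the triangle picture by the single determinant identity $\det(g,g')\det(z,z')=1-g(z')g'(z)\ge0$ for supporting pairs, which is a clean two-dimensional avatar of the monotonicity of the duality map, and then reads off the signs of the Cramer coefficients. This is more algebraic, entirely self-contained, and---as you correctly point out---does not use the smoothness of $x_1,x_2$ anywhere: linear independence of both pairs $\{x_1,x_2\}$ and $\{x_1^*,x_2^*\}$ is all that is needed. By contrast, the paper's version does use smoothness to exclude $x_i^*(x_j)=1$. So your argument is in fact slightly sharper, and the identity you isolate could be reused in similar planar situations.
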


\begin{proof}
We again argue  geometrically.
Denote $b_1 = x_1^*(x_2)$,  $b_2 = x_2^*(x_1)$. Evidently, $\Abs{b_i} < 1$, $i=1, 2$. Denote by $x_3$ the point at which $x_1^*(x_3) = x_2^*(x_3) = 1$ and consider the triangle $\Delta $ whose vertices are $x_1$, $x_2$, and $x_3$. Then, $y \in \Delta$, and the supporting line $\ell = \{x \in Y \mycolon f(x) = 1\}$ contains~$y$. Now, $x_1, x_2$ lie on one  side of $\ell$ (actually, all points of $B_Y$ do), whereas  $x_3$ lies on the opposite side of $\ell$, that is
\begin{equation*}
f(x_1) \le 1, \ f(x_2) \le 1, \  \textrm{and}\ f(x_3) \ge 1.
\end{equation*}
Since $x_1^*,  x_2^* \in Y^*$ are linearly independent, there is a (unique) representation of $f$ as $f = c_1 x_1^* + c_2 x_2^*$. Let us substitute this representation into the previous inequalities:
\begin{align} \label{eq: fandDelta-subs}
  c_1 + c_2b_2 \le 1, \\
  \label{eq: fandDelta-subs3}
  c_1 b_1 + c_2 \le 1, \\
  \label{eq: fandDelta-subs4}
c_1 + c_2   \ge 1.
\end{align}
From \eqref{eq: fandDelta-subs} and \eqref{eq: fandDelta-subs4} together with $b_2\neq1$, we deduce that $c_2 \ge 0$,
and likewise from \eqref{eq: fandDelta-subs3} and \eqref{eq: fandDelta-subs4}
  that  $c_1 \ge 0$.
\end{proof}

We are now able to present the pending proof.

\begin{proof}[Proof of Theorem~\ref{Theo2-implication}]
Denote by $q$ the corresponding quotient map $q \mycolon X \longrightarrow X/Z$; then $q^*$ maps $(X/Z)^*$ isometrically onto $Z^\bot = \Lin\{f_1, f_2\} \subset X^*$. Let $x_1, x_2 \in S_X$  be points at which $f_1(x_1) = f_2(x_2) = 1$, $\widetilde x_1 = q(x_1)$, $\widetilde x_2 = q(x_2)$ and let $\widetilde f_1, \widetilde f_2 \in S_{(X/Z)^*}$ be those functionals for which $q^*(\widetilde f_i) = f_i$, $i=1, 2$. Then, $\widetilde f_1(\widetilde x_1) = \widetilde f_2(\widetilde x_2) = 1$, so, in particular, $\widetilde x_1, \widetilde x_2 \in S_{X/Z}$.

We will consider two cases.

$\bullet$ {Case 1:} $\widetilde x_1$ and $\widetilde x_2$  are smooth points  of~$S_{X/Z}$. In this case, since $\widetilde f_1, \widetilde f_2$  are linearly independent, $\widetilde x_1, \widetilde x_2$  are linearly independent as well. Let $\ell \subset X/Z$ be the straight line connecting $\widetilde x_1$ with $\widetilde x_2$, let $\widetilde f_3 \in S_{(X/Z)^*}$ be the norm-one functional  taking a positive constant value $\alpha < 1$ on $\ell$ and let $\widetilde x_3 \in S_{X/Z}$ be a point at which $\widetilde f_3(\widetilde x_3) = 1$.   Select a point $e_1 \in S_E$,  a supporting functional $e_1^* \in S_{E^*}$ at $e_1$, $e_2 \in  S_E \cap \ker e_1^*$ and $T_{\tau} \mycolon  E \longrightarrow E$  as in Lemma~\ref{prop1-arbitrary-point}. Choose  $t \in (\alpha, 1)$, and denote by  $R \mycolon X/Z \longrightarrow E$ the linear operator such that $R(\widetilde x_1 - \widetilde x_2) = e_2$ and $ R(\widetilde x_1) = t e_1$.
Applying Lemma~\ref{prop1-arbitrary-point} to $K = \{e \in R(B_{X/Z}) \mycolon \Abs{e_1^*(e)} \le t\}$ we obtain some $0<\tau< 1$ such that $T_\tau (K) \subset U_E$. We also note for future use that
$ e_1^*(R(\widetilde x_1)) =  e_1^*(R(\widetilde x_2)) = t$, consequently  $R^*e_1^* =  \frac{t}{\alpha} \widetilde f_3$ and  $e_1^*(R(\widetilde x_3)) = \frac{t}{\alpha} > 1$.

Our goal is to demonstrate that $T = T_\tau \circ R \circ q \mycolon X \longrightarrow E$ is the operator we are looking for. Consider the composition $T_\tau \circ R \mycolon X/Z \longrightarrow E$. It is a bijection, which ensures that $\ker T = \ker q =  Z$.
The property $e_1^* \circ T_\tau = e_1^*$ implies that
$$
\|T_\tau \circ R\| \ge \Abs{ e_1^*((T_\tau \circ R)\widetilde x_3)} = e_1^*(R(\widetilde x_3)) > 1.
$$
Let $y \in S_{X/Z}$ be a point at which $\|(T_\tau \circ R) (y)\| = \|T_\tau \circ R\|$. Then $y$ must belong  to $\cone\{\widetilde x_1, \widetilde x_2\} \cup (-\cone\{\widetilde x_1, \widetilde x_2\})$ because
otherwise we would have $|\widetilde f_3(y)| \le \alpha$ and thus $|e_1^*(Ry)|= \frac t\alpha |\widetilde f_3(y)| \le t$ so that $(T_\tau\circ R)y \in T_\tau(K) \subset U_E$, which contradicts the estimate $\|(T_\tau \circ R)(y)\|>1$.

Replacing $y$ by $-y$, if necessary, we may assume that $y \in S_{X/Z}\cap \cone\{\widetilde x_1, \widetilde x_2\}$. Let $g \in S_{E^*}$ be a supporting functional for $[T_\tau \circ R](y)$, that is, $g \bigl([T_\tau \circ R](y)\bigr) = \|T_\tau \circ R\|$. Then,
$$
\frac{(T_\tau \circ R)^*g}{\|T_\tau \circ R\|}
$$
is a supporting functional at $y$, so, by Lemma~\ref{prop3-tangents-to-an-arc},
$$
\frac{(T_\tau \circ R)^*g}{\|T_\tau \circ R\|} \in \cone\{\widetilde f_1, \widetilde f_2\}
$$
and consequently
$$
q^*\left(\frac{(T_\tau \circ R)^*g}{\|T_\tau \circ R\|}\right) \in \cone\{f_1, f_2\}.
$$
Since $\cone\{f_1,f_2\}$ consists only of norm attaining functionals,
there is $x \in S_X$ such that
$$
\left[q^*\left(\frac{(T_\tau \circ R)^*g}{\|T_\tau \circ R\|}\right)\right](x) = 1.
$$
From this, we get that
\begin{align*}
\|T(x)\| &= \|(T_\tau \circ R \circ q)(x)\| \ge g((T_\tau \circ R \circ q)(x)) \\
&= \left(q^*\left((T_\tau \circ R)^*g\right)\right)(x) = \|T_\tau \circ R\|.
\end{align*}
On the other hand, $\|T\| \le \|T_\tau \circ R\|$, which means that $T$ attains its norm at $x$.

$\bullet$ {Case 2:} At least one of $\widetilde x_1$,  $\widetilde x_2$ is not a smooth point  of~$S_{X/Z}$. Without loss of generality we may assume that $\widetilde x_1$ is not a smooth point, so there are two linearly independent functionals $g_1, g_2 \in S_{(X/Z)^*}$ with $g_1(\widetilde x_1) = g_2(\widetilde x_1) = 1$.

Select a smooth point $e_1 \in S_E$, and let $e_1^* \in S_{E^*}$ be the supporting functional  at~$e_1$. Select $e_2 \in  S_E \cap \ker e_1^*$ and define $T_{\tau} \mycolon  E \longrightarrow E$  as in Lemma~\ref{prop1-arbitrary-point}. Denote $g = \frac12(g_1 + g_2)$ and choose $y \in S_{X/Z} \cap \ker g$. Denote by $R \mycolon X/Z \longrightarrow E$ the linear operator satisfying $R(\widetilde x_1) = e_1$ and $R(y) = e_2$. Then, $[R^*e_1^*](\widetilde x_1) = 1$, $[R^*e_1^*](y) = 0$, so $R^*e_1^* = g$. Let us consider those $h_1, h_2 \in E^*$ for which $R^*h_1 = g_1$ and  $R^*h_2 = g_2$. These   $h_1, h_2 \in E^*$  satisfy all the conditions of Lemma~\ref{prop2-smooth-point-and-an-angle}, so for the corresponding set
$$
A = \{x \in E \mycolon \max\{\Abs{h_1(x)}, \Abs{h_2(x)}\} \le 1\}
$$
there is $0<\tau<1$ such that $T_{\tau}(A) \subset B_E$.

Let us demonstrate that $T = T_\tau \circ R \circ q \mycolon X \longrightarrow E$ attains its norm at~$x_1$. Indeed,
\begin{align*}
T(B_X) &= T_{\tau}(R(q(B_X))) \subset T_{\tau}(R(B_{X/Z}))\\
&\subset T_{\tau}(R(\{\widetilde x \in X/Z \mycolon \max\{\Abs{g_1(\widetilde x)}, \Abs{g_2(\widetilde x)}\} \le 1\}))\\
&= T_{\tau}(A) \subset B_E,
\end{align*}
which gives us that $\|T\| \le 1$. But, on the other hand,
$$
T(x_1) = T_{\tau}(R(\widetilde x_1)) = T_{\tau}(e_1) = e_1,
$$
so $\|T(x_1)\| = 1$.
\end{proof}

Our last goal in this section is to present all the implications proved so far in the particular case of rank-two operators, and to discuss the possibility of reversing them.

Let $X$ and $Y$ be Banach spaces of dimension at least two, and let $Z$ be a closed subspace of $X$ of codimension two. Consider the following properties:
\begin{enumerate}
\item[(a)]
  $Z$ is the kernel of a norm-one projection.
\item[(b)]
  $Z$ is proximinal in $X$.
\item[(c)]
  $Z^\bot \subset \NA(X)$.
\item[(d)]
 Every $T\in \mathcal{L}(X,Y)$ with $\ker T\supset Z$ is norm attaining.
\item[(e)]
 Every $T\in \mathcal{L}(X,Y)$ with $\ker T=Z$ is norm attaining.
\item[($\diamondsuit$)]
There exists $T\in \mathcal{L}(X,Y)$ with $\ker T=Z$ which is norm attaining.
\item[(f)]
There are linearly independent $f,g\in Z^\bot$ such that $\cone\{f,g\}\subset Z^\bot\cap \NA(X)$.
\item[(g)] There are linearly independent $f,g\in Z^\bot\cap S_{X^*}$ and $x\in S_X$ such that $f(x)=1=g(x)$.
\end{enumerate}

Then, the following implications hold:
\begin{equation}\label{diageneral}
\begin{tikzpicture}[baseline=(m.center)]
  \matrix (m) [matrix of math nodes,row sep=2em,column sep=3.3em,minimum width=2.2em]
  {
     \text{(a)} & \text{(b)} & \text{(c)} & \text{(e)} & \fbox{\text{($\diamondsuit$)}} & \text{(f)} & \text{(g)} \\ & & \text{(d)}   \\};
  \path[-stealth]
    (m-1-1) edge [double]  (m-1-2)
    (m-1-2) edge [double] (m-1-3)
    (m-1-3) edge [double] node [above] {($\star$)} (m-1-4)
            edge [implies-implies,double]  (m-2-3)
     (m-1-4) edge [double] node [above] {($\star\star$)} (m-1-5)
      (m-1-6) edge [double] node [above] {($\star{\star}\star$)} (m-1-5)
       (m-1-7) edge [double] (m-1-6)
    ;
\end{tikzpicture}
\end{equation}

We now discuss these implications and  the possibility of  reversing them.

It is immediate that (a) implies (b), but the converse result is obviously false, even for finite-dimensional (non-Hilbertian) spaces. It is well known that (b) implies (c), but not conversely, see \cite[Section~2]{InduPLMS1982} or \cite[Section~2]{Rmoutil} for a discussion of this. The implication (c) $\Rightarrow$ (d) is Proposition~\ref{prop:kerbotinNAimpliesNA}, and the reverse implication is obvious using rank-one operators. Next, the implications (d) $\Rightarrow$ (e) $\Rightarrow$ ($\diamondsuit$) are obvious.

On the other side of condition ($\diamondsuit$), we have that (g) $\Rightarrow$ (f) since, obviously, $\cone\{f,g\}\subset \NA(X)$ if (g) holds, but the converse result is not true as follows by taking $X$ to be a smooth reflexive Banach space. That (f) implies ($\diamondsuit$) is exactly our Theorem~\ref{Theo2-implication}.

So it remains to discuss the possible converses of the implications ($\star$), ($\star\star$), and ($\star{\star}\star$).

Let us start by discussing  the possibility of the reciprocal result to implication ($\star$) above to be true. We have two different behaviors, depending on whether the range space is strictly convex or not (that is, whether the unit sphere of the range space does not or does contain non-trivial segments).

For non-strictly convex range spaces, we have the following positive result.

\begin{proposition}\label{prop:allNAimpliesybotNA-notrotund}
Let $X$ be a Banach space, let $E$ be a two-dimensional space which is not strictly convex and let $Z$ be a two-codimensional closed subspace of $X$. If every $T\in \mathcal{L}(X,E)$ with $\ker T=Z$ attains its norm, then $Z^\bot\subset \NA(X)$.
\end{proposition}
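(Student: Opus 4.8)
The plan is to dualize the statement to the two-dimensional quotient $W := X/Z$ and then, for each functional we must handle, to build one test operator whose only norm-attaining directions are the ones we care about. Let $q\mycolon X\longrightarrow W$ be the quotient map, so that $q^*$ is an isometric isomorphism of $W^*$ onto $Z^\bot$, and set $C := q(B_X)$, a convex symmetric set with $U_W\subseteq C\subseteq B_W$ and $\overline{C}=B_W$. First I would record two dictionary facts. For $\tilde f\in S_{W^*}$, the functional $f=q^*\tilde f\in Z^\bot$ (of norm one) attains its norm on $X$ if and only if the face $G_{\tilde f}:=\{w\in S_W\mycolon \tilde f(w)=1\}$ meets $C$; since every norm-one element of $Z^\bot$ arises this way, the goal $Z^\bot\subseteq\NA(X)$ is equivalent to the assertion ``$G_{\tilde f}\cap C\neq\emptyset$ for every $\tilde f\in S_{W^*}$''. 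On the other hand, the operators $T\in\mathcal{L}(X,E)$ with $\ker T=Z$ are exactly the maps $T=R\circ q$ with $R\mycolon W\longrightarrow E$ an isomorphism, and one checks that $\|T\|=\|R\|$ and that $T$ attains its norm precisely when $R$ attains its norm at some point of $C$. Thus the hypothesis reads: \emph{every} isomorphism $R\mycolon W\longrightarrow E$ attains its norm at a point of $C$.

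With this reformulation, fix $\tilde f\in S_{W^*}$; I want an isomorphism whose norm-attaining set is confined to $\pm G_{\tilde f}$. This is where non-strict convexity of $E$ enters. Choose distinct $p,q\in S_E$ with $[p,q]\subseteq S_E$, put $e_0=\tfrac12(p+q)\in S_E$ and $e_1=\tfrac12(q-p)$; then $\{e_0,e_1\}$ is a basis of $E$, $\|e_1\|\le 1$, and $e_0+ce_1\in[p,q]\subseteq S_E$ for all $|c|\le 1$. Pick $\psi\in S_{W^*}$ linearly independent from $\tilde f$, fix $0<\rho<1$, and define the isomorphism $R_\rho\mycolon W\longrightarrow E$ by
\[
R_\rho w = \tilde f(w)\,e_0 + \rho\,\psi(w)\,e_1 .
\]

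The heart of the argument is a convex-combination estimate showing simultaneously that $\|R_\rho\|=1$ and that $R_\rho$ attains its norm only on $\pm G_{\tilde f}$. For $w\in B_W$ write $a=\tilde f(w)\in[-1,1]$ and $c=\rho\psi(w)$, so $|c|\le\rho<1$, and observe that
\[
R_\rho w = \frac{1+a}{2}\,(e_0+ce_1) + \frac{1-a}{2}\,(-e_0+ce_1)
\]
is a convex combination of the two points $e_0+ce_1\in[p,q]$ and $-e_0+ce_1\in-[p,q]$, both of which lie in $B_E$; hence $R_\rho w\in B_E$ and $\|R_\rho\|\le 1$. The value $1$ is attained on $G_{\tilde f}$, where $a=1$ gives $R_\rho w=e_0+ce_1\in S_E$. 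Conversely, if $\|R_\rho w\|=1$ for some $w\in S_W$ with $|a|<1$, then $R_\rho w$ is a \emph{proper} convex combination of the two boundary points above, so both lie on the face of $B_E$ supported at $R_\rho w$; that face being convex, the entire segment $\{x\,e_0+c\,e_1\mycolon x\in[-1,1]\}$ lies in $S_E$, and in particular $c\,e_1\in S_E$. This is impossible because $\|c\,e_1\|\le\rho<1$. Therefore $\|R_\rho w\|=1$ forces $|\tilde f(w)|=1$, i.e.\ $w\in\pm G_{\tilde f}$.

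It then remains only to invoke the hypothesis: $R_\rho$ attains its norm at some $\xi_0\in C$, and by the previous paragraph $\xi_0\in\pm G_{\tilde f}$; since $C$ is symmetric, $G_{\tilde f}\cap C\neq\emptyset$, as desired. As $\tilde f\in S_{W^*}$ was arbitrary, this yields $Z^\bot\subseteq\NA(X)$. I expect the only genuinely delicate point to be the one handled in the third paragraph, namely ruling out that $R_\rho$ attains its norm away from the chosen flat face of $E$ (along the ``side'' directions of $S_E$); the choice $0<\rho<1$ together with $\|e_1\|\le 1$ is precisely what pins the norm-attaining set inside $\pm G_{\tilde f}$. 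Everything else is routine bookkeeping about the quotient $W=X/Z$ and the set $C=q(B_X)$.
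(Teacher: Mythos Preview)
Your proof is correct and follows essentially the same approach as the paper's: for each norm-one $\varphi\in Z^\bot$ you build the test operator $x\mapsto \varphi(x)\,e_0+\rho\,\psi(x)\,e_1$ (the paper writes this as $U(\varphi(x),\tfrac12\psi(x))$ for a norm-one bijection $U\mycolon\ell_\infty^2\to E$ sending the top edge of $B_{\ell_\infty^2}$ onto a segment in $S_E$) and then argue that norm attainment forces $|\varphi(x)|=1$. The only differences are cosmetic---you phrase things in the quotient $W=X/Z$ and justify the key step by a convex-combination argument in $E$ rather than by pulling back through the $\ell_\infty^2$-norm---and one notational slip: you use the symbol $q$ for both the quotient map and a point of $S_E$.
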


\begin{proof}
Let us start with the simpler case of $E=\ell_\infty^2$. Fix $\varphi\in Z^\bot$ with $\|\varphi\|=1$;   our aim is to show that $\varphi\in \NA(X)$. To get this, consider $\psi\in Z^\bot$ with $\|\psi\|=1$ such that $Z^\bot=\Lin\{\varphi,\psi\}$ and define $T\mycolon X\longrightarrow \ell_\infty^2$ by $Tx=\bigl(\varphi(x),\tfrac12 \psi(x)\bigr)$ for all $x\in X$. Then, $\|T\|=1$ and $\ker T=Z$, so $T\in \NA(X,\ell_\infty^2)$ by hypothesis. But then, clearly, $\varphi\in \NA(X)$, as desired.

Now, suppose that $E$ is a two-dimensional non-strictly convex space. Then, we may find a bijective norm-one operator $U\mycolon \ell_\infty^2 \longrightarrow E$ such that $U(1,t)\in S_E$ for every $t\in [-1,1]$ (we just have to use the segment contained in $S_E$). Now, fix $\varphi\in Z^\bot$ with $\|\varphi\|=1$, our aim is to show that $\varphi\in \NA(X)$. Again, we consider $\psi\in Z^\bot$ with $\|\psi\|=1$ such that $Z^\bot=\Lin\{\varphi,\psi\}$ and this time we define the operator $T\mycolon X \longrightarrow E$ by $Tx=U(\varphi(x),\tfrac12 \psi(x))$ for all $x\in X$. On the one hand, $\|T\|=1$: consider a sequence $\{x_n\}_{n\in \N}$ in $S_X$ such that $\varphi(x_n)\longrightarrow 1$ and, passing to a subsequence, we also have that $\psi(x_n)\longrightarrow t_0\in [-1,1]$ and so $$\|Tx_n\|=\|U(\varphi(x_n),\tfrac12 \psi(x_n)\|\longrightarrow \|U(1,t_0)\|=1.$$
On the other hand, $\ker T=Z$, so $T$ attains its norm by hypothesis. That is, there is $x\in S_X$ such that $1=\|Tx\|=\|U(\varphi(x),\tfrac12 \psi(x))\|$. But as $\|U\|=1$, this implies that $\|(\varphi(x),\tfrac12 \psi(x))\|_\infty=1$ and this immediately gives that $|\varphi(x)|=1$, that is, $\varphi\in \NA(X)$, as desired.
\end{proof}

When the range space is strictly convex, the above proof is not valid and, actually, the result is false as the following counterexample shows.

\begin{example}\label{example-allTNA-butYbotNOTNA-rotund}
Let $E$ be a two-dimensional strictly convex Banach space. Then, there are a Banach space $X$ and a two-codimensional closed subspace $Z$ of $X$ satisfying that every operator $T\in \mathcal{L}(X,E)$ with $\ker T=Z$ attains its norm, but $Z^\bot$ is not contained in $\NA(X)$.
\end{example}

\begin{proof}
Take a two-dimensional Banach space $W$ whose unit sphere $S_W$ contains a segment $[a, b]$, where the endpoints $a,b$ are extreme points of the sphere, the number of extreme points is countable, and the endpoints $a, b$ are smooth point of the sphere. Let $X=\ell_1$ and define an operator $U\mycolon X = \ell_1 \longrightarrow W$ that maps the vectors of the unit basis $\{e_n\}$ onto all the extreme points of $S_W$ with the exception of $\pm a$ and $\pm b$. Then,
$Z = \ker U$ is a two-codimensional closed subspace of $X$, whose annihilator $Z^\bot$ is not contained in $\NA(X)$, because if one takes $f \in W^*$ which attains its norm on $[a, b]$, then $U^*f\in Z^\bot$ does not attain its norm. On the other hand, as $\overline{U(B_X)}=B_W$, $X/Z$ is isometrically isomorphic to $W$ by virtue of the injectivization $\widetilde{U} \in \mathcal{L}(X/Z, W)$ of $U$ which satisfies $U=\widetilde{U}q$. So, if one takes an arbitrary norm-one operator $T\mycolon X \longrightarrow E$ with $\ker T=Z$, then $T$ factors through $U$ (or, what is the same, through the composition of the quotient map $q$ and $\widetilde{U}$). That is, $T = \widetilde T U$ for some norm-one operator $\widetilde{T}\mycolon W\longrightarrow E$, so the image $T(B_{X})$ of the closed unit ball is a linear copy (under~$\widetilde T$) of $U(B_{X}) = B_W \setminus ([a, b] \cup [-a, -b])$.

We shall argue that $\|\widetilde T(a)\|\neq 1$. Otherwise one could pick some $e^*\in S_{E^*}$ with $e^*(\widetilde T(a))=1$. It follows that $w^*:= \widetilde T^* (e^*)$ is a supporting functional at $a$ of norm one. Any supporting functional at $\frac12 (a+b)$ also supports~$a$, and by smoothness of~$a$ it has to coincide with~$w^*$. Consequently $w^*(a)= w^*(\frac12 (a+b)) = w^*(b)=1$ and so $\widetilde T(a)$, $\widetilde T(\frac12(a+b))$ and $\widetilde T(b)$ lie on a non-trivial segment of $S_E$, which is impossible when $E$ is strictly convex. Likewise $\|\widetilde T(b)\|\neq 1$. Hence there exists an extreme point $w$ of $B_W$ different from $\pm a$, $\pm b$ for which $\|\widetilde T(w)\| = 1$. This $w$ is of the form $w=U(e_n)$ for some~$n$, and we see that $T=\widetilde T U$ attains its norm at this~$e_n$.
\end{proof}

We would like to emphasize a question related to the example above, which asks about the possibility of  the implication (e) $\Rightarrow$ (f) being true when the range space is strictly convex (it is true for non-strictly convex range spaces by Proposition~\ref{prop:allNAimpliesybotNA-notrotund}). The failure of (f) $\Rightarrow$ (e) will be shown shortly in Example~\ref{example-exists-NA2-but-not-all}.

\begin{problem}
Let $X$ be a Banach space, let $Z$ be a closed subspace of $X$ of codimension two and let $E$ be a two-dimensional strictly convex space. Suppose that every $T\in \mathcal{L}(X,E)$ with $\ker T=Z$ attains its norm, does then $Z^\bot\cap \NA(X)$ contain a non-trivial cone?
\end{problem}

To show the failure of the converse implication to ($\star\star$) in diagram~(\ref{diageneral}), the next example works. Note that it also shows that (f) does not imply~(e).

\begin{example}\label{example-exists-NA2-but-not-all}
{\slshape There exists a rank-two operator $T\in \mathcal{L}(\ell_1,\ell_2^2)$ such that $[\ker T]^\bot \cap \NA(\ell_1)$ contains a non-trivial cone, but $T$ does not attain its norm.}

Indeed, let $T\in \mathcal{L}(\ell_1,\ell_2^2)$ be  an operator such that
$$
T(B_{\ell_1}) = \co\{\pm u_1, \pm \tfrac12 u_2\}\setminus \{\pm u_1\}
$$
where $\{u_1,u_2\}$ is the canonical basis of $\ell_2^2$. This operator can  easily be constructed by mapping the unit vector basis of $\ell_1$ onto a  countable dense subset of the union of the half-open segments
$(-u_1, \frac12u_2] \cup (u_1, \frac12u_2] \subset \ell_2^2$. Then, $\|T\| = 1$, but the norm is not attained. Nevertheless, the functionals from the cone in $(\ell_2^2)^*$ generated by $2u_2^* \pm u_1^*$  attain their maxima on $T(B_{\ell_1})$ at the point $\frac12 u_2$, so the image of this cone under the isomorphic embedding $T^*$ is contained in $[\ker T]^\bot$ and consists of norm attaining functionals.
\end{example}

Finally, it  follows from the next example
that the converse implication to $(\star{\star}\star)$ in diagram \eqref{diageneral} fails in general.

\begin{example}
{\slshape Let $X=\ell_1$ and let $E$ be an arbitrary two-dimensional space. Then, there is $T\in \NA^{(2)}(X,E)$ such that $[\ker T]^\bot\cap \NA(X)$ does not contain non-trivial cones.}

Indeed, let $\{z_n\mycolon n\geq 2\}$ be a dense subset of the open unit ball of $E$, let $u_0$ be a smooth point of $S_E$ whose unique support functional is called $u_0^*\in S_{E^*}$, and define $T\mycolon \ell_1 \longrightarrow E$ by means of the unit vector basis $\{e_n\}$ of $\ell_1$ by
$$
T(e_1)= u_0, \qquad T(e_n) = z_n \mbox{ for }n\ge2.
$$
Clearly, $T$ is onto and attains its norm (at $e_1$), so $T\in \NA^{(2)}(X,E)$. On the other hand, since $T(B_{\ell_1})$ is dense in $B_E$, the adjoint operator $T^*\mycolon E^* \longrightarrow (\ell_1)^*$ is an isometric embedding. Also, $T^*(E^*) \subset [\ker T]^\bot$ and the dimensions of both subspaces are equal to~$2$, so we have $[\ker T]^\bot = T^*(E^*)$. Now, consider an arbitrary non-zero $h \in [\ker T]^\bot\cap \NA(X)$. Let us write $h = T^*y^*$, where $y^* \in E^*$, and let $x = (x_1, x_2,\dots) \in B_{\ell_1}$ be such that $\|h\| = h(x)$. Then
$$
\|y^*\| = \|h\| =  h(x) = y^*(Tx) \le \|y^*\| \|Tx\| \le \|y^*\|,
$$
so $\|Tx\| = 1$ and $y^*$ attains its norm at $Tx$.
Taking into account that $\|x\| = \sum_{n = 1}^\infty  |x_n| = 1$, that $\|z_n\| < 1$ and that
$$
\|Tx\| \le |x_1| + \sum_{n \ge 2}  |x_n| \|z_n\|,
$$
we see that the equality $\|Tx\| = 1$ may happen only if $x = x_1 e_1$ with $|x_1| = 1$. Therefore, $Tx = \pm u_0$  and $y^*$ attains its norm at $Tx$, so $y^*$ is proportional to $u_0^*$. We have demonstrated that $[\ker T]^\bot \cap \NA(X) \subset \Lin T^*(u_0^*)$, so $\NA(X)$ does not contain two linearly independent elements of $[\ker T]^\bot$.
\end{example}

\section{Density of norm attaining finite rank operators}\label{section:density}

After discussing the existence of norm attaining finite rank operators, it is now time to study positive results for the density  of such operators.
An easy observation is pertinent, namely, we may restrict ourselves to consider finite-dimensional codomain spaces if we are interested in results valid for all codomain spaces:  if $X$ has the property that for all $Y$, all finite rank operators $T\mycolon X\longrightarrow Y$ can be approximated by norm attaining operators, then all such $T$ can be approximated by norm attaining finite rank operators.
Indeed,  if $T\mycolon X\longrightarrow Y$ has finite rank, then we may view $T\mycolon X\longrightarrow T(X)$, approximate $T$ here, and then compose the approximating sequence with the isometric inclusion operator from $T(X)$ into~$Y$.

The leading question here is the following open problem.

\begin{problem}
Is it true that every finite rank operator can be approximated by norm attaining (finite rank) operators?
\end{problem}

As in the previous section, we will focus on the domain spaces. So, the general aim in this section is to provide partial answers to the following question.

\begin{problem}\label{problem-sufficient-density}
Find sufficient conditions on a Banach space $X$ so that every finite rank operator whose domain is $X$ can be approximated by (finite rank) norm attaining operators.
\end{problem}

First, it is immediate that Lindenstrauss's property~(A) on a Banach space $X$ implies that a finite rank operator whose domain is $X$ can be approximated by norm attaining finite rank operators (just restrict the codomain to the range space, use property~(A) there and inject the range space again into the codomain). Therefore, some positive solutions to the problem above are the known sufficient conditions for property~(A) like the Radon-Nikod\'{y}m property, the property alpha, or the fact that the unit ball contains a set of uniformly strongly exposed points which generates the ball by taking the closed convex hull. We refer to the already cited survey paper \cite{Aco} for more information.

If one looks for less restrictive conditions valid for finite rank operators but not necessarily for all operators, there are  such conditions for compact operators. A detailed account of these properties is given in the survey paper~\cite{M-RACSAM}. But all of the known results of this kind need some sort of approximation property of the dual space, since they actually provide that every compact operator can be approximated by norm attaining finite rank operators.

Our main aim here is to try to provide a sufficient condition for the density of norm attaining finite rank operators which does not require the approximation property of the dual of the domain space.
Here is the result which follows directly from Proposition~\ref{prop:kerbotinNAimpliesNA}.

\begin{theorem}\label{thm:suf-density}
Let $X$ be a Banach space satisfying that for every $n\in \N$, every $\eps>0$ and all $x_1^*,\ldots,x_n^*\in B_{X^*}$, there are $y_1^*,\ldots,y_n^*\in B_{X^*}$ such that $\|x_i^*-y_i^*\|<\eps$ for every $i=1,\ldots,n$ and
$$
\Lin\{y_1^*,\ldots,y_n^*\}\subset \NA(X).
$$
Then, every finite rank operator whose domain is $X$ can be approximated by finite rank norm attaining operators.

If, moreover, $X^*$ has the approximation property, then every compact operator whose domain is $X$ can be approximated by finite rank norm attaining operators.
\end{theorem}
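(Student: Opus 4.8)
The plan is to deduce both statements from the Corollary to Proposition~\ref{prop:kerbotinNAimpliesNA}: a compact operator $T$ with $[\ker T]^\bot\subset\NA(X)$ attains its norm. The hypothesis on $X$ is tailor-made to arrange exactly this, after an arbitrarily small perturbation of the ``coefficient functionals'' of a finite rank operator.

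First I would treat the finite rank case directly. Given a finite rank operator $T\mycolon X\longrightarrow Y$ of rank $n\geq1$, I would write $T=\sum_{i=1}^n x_i^*\otimes y_i$ with $\{y_i\}_{i=1}^n$ linearly independent, so that $\ker T=\bigcap_{i=1}^n\ker x_i^*$ and hence $[\ker T]^\bot=\Lin\{x_1^*,\dots,x_n^*\}$, the $x_i^*$ being linearly independent. Normalizing the $x_i^*$ into $B_{X^*}$ and absorbing the scalars into the $y_i$, I would apply the hypothesis to obtain $y_1^*,\dots,y_n^*\in B_{X^*}$ with $\|x_i^*-y_i^*\|$ as small as we wish and $\Lin\{y_1^*,\dots,y_n^*\}\subset\NA(X)$. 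Putting $S=\sum_{i=1}^n y_i^*\otimes y_i$ we get $[\ker S]^\bot=\Lin\{y_1^*,\dots,y_n^*\}\subset\NA(X)$, so $S$ attains its norm by the Corollary quoted above, while $\|S-T\|\leq\sum_{i=1}^n\|x_i^*-y_i^*\|\,\|y_i\|$ is controlled by the size of the perturbation; choosing $\eps$ small enough makes this smaller than any prescribed tolerance.

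For the second statement I would combine this with the classical fact that, when $X^*$ has the approximation property, every compact operator $T\mycolon X\longrightarrow Y$ is a norm limit of finite rank operators. Thus, given a compact $T$, I would first approximate it by a finite rank $T_0$, then approximate $T_0$ by a finite rank norm attaining operator using the first part, and close the argument with the triangle inequality.

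I expect the only delicate point to be the passage from ``$X^*$ has the approximation property'' to ``compact operators with domain $X$ are approximable by finite rank operators''. This is Grothendieck's characterization; its proof uses that compactness of $T$ forces $T^{**}(X^{**})\subseteq J_Y(Y)$, so that a finite rank approximation of $\Id_{X^*}$ on the compact set $\overline{T^*(B_{Y^*})}$, composed with $T^*$, is in fact the adjoint of a finite rank operator $X\longrightarrow Y$. I would simply quote this standard result. The remaining steps are routine bookkeeping: for small $\eps$ the $y_i^*$ stay linearly independent so the rank is preserved, and the norm estimates close up immediately.
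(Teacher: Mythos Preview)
Your proof is correct and follows essentially the same approach as the paper's: represent the finite rank operator as $\sum x_i^*\otimes y_i$, perturb the coefficient functionals using the hypothesis so that their span lies in $\NA(X)$, and invoke Proposition~\ref{prop:kerbotinNAimpliesNA} (or its corollary) to conclude that the perturbed operator attains its norm. The only cosmetic difference is that the paper first reduces to a finite-dimensional codomain $F$ and uses an Auerbach basis of $F$, which makes the normalization of the $x_i^*$ and the norm estimate $\|T-S\|<\eps$ automatic; your manual normalization and the estimate $\|S-T\|\le\sum_i\|x_i^*-y_i^*\|\,\|y_i\|$ achieve the same end.
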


Before providing the proof of the theorem, let us state the main consequence which follows immediately from it.

\begin{corollary}\label{coro:denselineability=>NA(X,F)dense}
Let $X$ be a Banach space such that there is a norm dense linear subspace
of $X^*$ contained in $\NA(X)$. Then, for every Banach space $Y$, every operator $T\in \mathcal{L}(X,Y)$ of finite rank can be approximated by finite rank norm attaining operators.

If, moreover, $X^*$ has the approximation property, then every compact operator whose domain is $X$ can be approximated by finite rank norm attaining operators.
\end{corollary}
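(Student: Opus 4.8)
The plan is to deduce this corollary directly from Theorem~\ref{thm:suf-density}; concretely, I would show that the existence of a norm dense linear subspace $D \subset X^*$ with $D \subset \NA(X)$ forces the (apparently stronger) hypothesis of that theorem. Once this implication is established, both the finite rank conclusion and the ``moreover'' statement about compact operators follow verbatim from the two conclusions of Theorem~\ref{thm:suf-density}, with no further work.

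So fix $n \in \N$, $\eps > 0$ and $x_1^*, \ldots, x_n^* \in B_{X^*}$; I must produce $y_1^*, \ldots, y_n^* \in B_{X^*}$ with $\|x_i^* - y_i^*\| < \eps$ and $\Lin\{y_1^*, \ldots, y_n^*\} \subset \NA(X)$. The natural choice is to take the $y_i^*$ inside $D$: since $D$ is a linear subspace, $\Lin\{y_1^*, \ldots, y_n^*\} \subset D \subset \NA(X)$ automatically, so the span condition comes for free from the single fact that $D$ is a subspace contained in $\NA(X)$. Density of $D$ then lets me approximate each $x_i^*$ as closely as I wish by an element of $D$.

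The one point that needs care --- and the only real obstacle --- is to keep the approximants inside the \emph{closed} unit ball $B_{X^*}$, since a generic $d_i \in D$ near $x_i^*$ could have norm slightly above~$1$. I would handle this by first contracting: pick $\delta \in (0, \eps/2)$, replace $x_i^*$ by $(1-\delta)x_i^*$, whose norm is at most $1-\delta$, and then choose $d_i \in D$ with $\|d_i - (1-\delta)x_i^*\| < \min\{\delta, \eps/2\}$. Then $\|d_i\| \le (1-\delta) + \delta = 1$, so $d_i \in B_{X^*}$, while
$$
\|d_i - x_i^*\| \le \|d_i - (1-\delta)x_i^*\| + \delta\|x_i^*\| < \tfrac{\eps}{2} + \delta < \eps.
$$
Setting $y_i^* = d_i$ verifies the hypothesis of Theorem~\ref{thm:suf-density}.

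With the hypothesis verified, the first assertion of the corollary is exactly the first conclusion of Theorem~\ref{thm:suf-density} (having restricted, as noted in the opening remarks of the section, to finite-dimensional codomains and then re-injected the range into $Y$), and the ``moreover'' statement --- adding the approximation property of $X^*$ to obtain approximation of compact, not merely finite rank, operators --- is its second conclusion. Hence both claims follow immediately, and I expect no additional difficulty beyond the elementary rescaling above.
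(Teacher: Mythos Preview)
Your proposal is correct and matches the paper's approach: the paper simply states that the corollary ``follows immediately'' from Theorem~\ref{thm:suf-density}, and your argument spells out exactly this immediate deduction, including the minor rescaling needed to keep the approximants inside $B_{X^*}$ (a detail the paper leaves implicit).
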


We do not know whether there are Banach spaces satisfying the conditions of Theorem~\ref{thm:suf-density} but not the ones of Corollary~\ref{coro:denselineability=>NA(X,F)dense}.

We may now give the pending proof.

\begin{proof}[Proof of Theorem~\ref{thm:suf-density}]
It is enough to show that $\NA(X,F)$ is dense in $\mathcal{L}(X,F)$ for every finite-dimensional space $F$. We fix an arbitrary finite-dimensional Banach space $F$ and consider an Auerbach basis $\{e_1,\ldots,e_n\}$ of~$F$ \cite[Theorem~4.5]{FHHMZ} with biorthogonal functionals $\{e_1^*,\ldots,e_n^*\}$ in~$F^*$. Given a norm-one operator $T\in \mathcal{L}(X,F)$ and $\eps>0$, let $x_i^*=T^*e_i^*\in B_{X^*}$ for $i=1,\ldots,n$, and observe that $T=\sum_{i=1}^n x_i^*\otimes e_i$. By hypothesis, we may find $y_1^*,\ldots,y_n^*\in B_{X^*}$ such that $\|x_i^*-y_i^*\|<\eps/n$ and $\Lin\{y_i^*,\ldots,y_n^*\}\subset \NA(X)$. We write $S=\sum_{i=1}^n y_i^*\otimes e_i\in \mathcal{L}(X,F)$ and first observe that $\|T-S\|<\eps$. On the other hand, as $S$ vanishes on $\bigcap_{i=1}^n \ker y_i^*$ we have that $[\ker S]^\bot \subset \Lin\{y_i^*,\ldots,y_n^*\}\subset \NA(X)$. This gives that $S\in \NA(X,F)$ by Proposition~\ref{prop:kerbotinNAimpliesNA}.

Let us show the moreover part: if $X^*$ has the approximation property, then every compact operator whose domain is $X$ can be approximated by finite rank operators (see \cite[Theorem~1.e.5]{LinTza-01} for instance) and the result now follows from the first part of the proof.
\end{proof}

As a consequence of this result, we may recover  some results stated in \cite{JoWo} and \cite[Section~3]{M-RACSAM} on norm attaining compact operators. The main tool provided in \cite{JoWo} to get solutions to Problem~\ref{problem-sufficient-density} is the following easy observation.

\begin{corollary}[{\rm \cite[Lemma~3.1]{JoWo}}] \label{corollary-Lemma31JohnsonWolfe}
Let $X$ be a Banach space such that for all $x_1^*,\ldots,x_n^*\in B_{X^*}$ and every $\eps>0$, there is a norm-one projection $P\in \mathcal{L}(X,X)$ of finite rank such that $\max_i \|x_i^*-P^*(x_i^*)\|<\eps$. Then, every compact operator whose domain is $X$ can be approximated by norm attaining operators of finite rank.
\end{corollary}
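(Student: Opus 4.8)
The plan is to deduce this corollary as an immediate consequence of Theorem~\ref{thm:suf-density}: I will verify that the finite-rank-norm-one-projection hypothesis here implies the simultaneous-approximation-by-norm-attaining-subspaces hypothesis of Theorem~\ref{thm:suf-density}. The key observation is that if $P\in \mathcal{L}(X,X)$ is a norm-one projection of finite rank, then every functional in the range of the adjoint $P^*$ automatically attains its norm, so such functionals provide the norm-attaining subspace we need.

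The argument proceeds as follows. Fix $n\in\N$, $\eps>0$ and $x_1^*,\ldots,x_n^*\in B_{X^*}$. By hypothesis, choose a finite-rank norm-one projection $P\in\mathcal{L}(X,X)$ with $\|x_i^*-P^*(x_i^*)\|<\eps$ for each $i$. Set $y_i^*=P^*(x_i^*)$; then $\|x_i^*-y_i^*\|<\eps$ and, since $\|P^*\|=\|P\|=1$, we have $y_i^*\in B_{X^*}$. It remains to check that $\Lin\{y_1^*,\ldots,y_n^*\}\subset\NA(X)$. Every such $y_i^*$ lies in $P^*(X^*)$, which is a linear subspace, so any linear combination $\varphi=\sum_i a_i y_i^*$ also lies in $P^*(X^*)$, i.e.\ $\varphi=P^*\psi$ for some $\psi\in X^*$. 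Because $P$ is a norm-one projection, $P(B_X)=B_{P(X)}$ and $P^*$ acts isometrically on the relevant subspace; the point is that $\varphi=P^*\varphi$ (as $P^*$ is idempotent and $\varphi\in P^*(X^*)$), so for any $x\in S_X$ with $Px$ suitably chosen one obtains $\varphi(x)=\psi(Px)$, and since the finite-dimensional space $P(X)$ is reflexive, $\psi|_{P(X)}$ attains its norm on $B_{P(X)}=P(B_X)$, say at $Px_0$ with $x_0\in B_X$. Then $\|\varphi\|=\|\psi|_{P(X)}\|=\psi(Px_0)=\varphi(x_0)$, so $\varphi\in\NA(X)$.

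With this verification in hand, the hypotheses of Theorem~\ref{thm:suf-density} are satisfied, and its conclusion gives that every finite rank operator whose domain is $X$ can be approximated by finite rank norm attaining operators. To recover the full statement about \emph{compact} operators, one invokes the ``moreover'' part of Theorem~\ref{thm:suf-density}: the existence of finite-rank norm-one projections $P$ whose adjoints approximate arbitrary finite families in $B_{X^*}$ is precisely a (metric) approximation property for $X^*$, so $X^*$ has the approximation property and every compact operator from $X$ can be approximated by finite rank operators, hence by norm attaining ones.

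The main point requiring care—and hence the step I would write out most carefully—is the verification that linear combinations of the $y_i^*$ are norm attaining, that is, that functionals in the range of the adjoint of a norm-one projection attain their norm. This rests on the identity $P(B_X)=B_{P(X)}$ (proximinality of $\ker P$, already used earlier in the excerpt for the Folklore results) together with the reflexivity of the finite-dimensional range $P(X)$; everything else is bookkeeping with the triangle inequality and the norms of $P$ and $P^*$.
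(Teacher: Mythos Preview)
Your proposal is correct and follows essentially the same route as the paper: verify that the hypothesis gives the conditions of Theorem~\ref{thm:suf-density} by taking $y_i^*=P^*(x_i^*)$ and using that $P^*(X^*)\subset\NA(X)$ (because $P(B_X)=B_{P(X)}$ is compact in the finite-dimensional range), and then invoke the approximation property of $X^*$ for the compact case. The only cosmetic difference is that the paper phrases the norm-attainment of $P^*x^*$ via compactness of $[P^*x^*](B_X)=x^*(P(B_X))$, whereas you phrase it via reflexivity of $P(X)$ and the identity $\|\varphi\|=\|\psi|_{P(X)}\|$; these are the same observation.
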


This result can also easily be deduced  from Theorem~\ref{thm:suf-density} as the hypotheses imply that $X^*$ has the approximation property and that the subspace $P^*(X^*)$ is contained in $\NA(X)$ (indeed, $[P^*(x^*)](B_X)=x^*(P(B_X))$ is compact as $P$ is a finite rank projection, and $P(B_X)= B_{P(X)}$ since $\|P\|=1$).

This result applies to $X=C_0(L)$ for every locally compact Hausdorff space $L$ \cite[Proposition~3.2]{JoWo} and also to $L_1(\mu)$ for every finite positive measure $\mu$ (see \cite[Lemma~3.12]{Dantas} for a detailed proof). For $C_0(L)$, we do not know whether it is actually true that $\NA(C_0(L))$ contains a dense linear subspace. In the case of $L_1(\mu)$ for a localizable measure $\mu$ (see e.g.\ \cite[Def.~211G]{Fremlin2} for the definition), the subspace of $L_1(\mu)^*=L_\infty(\mu)$ of those functions in $L_\infty(\mu)$ taking finitely many values, i.e., the subspace of step functions, is clearly contained in $\NA(L_1(\mu))$ and it is dense in~$L_\infty(\mu)$. Of course, the hypothesis of being localizable may be dropped, as every $L_1(\mu)$ space is isometrically isomorphic to an $L_1(\nu)$-space where $\nu$ is localizable (this follows, for instance, by Maharam's theorem). Let us comment that the fact that norm attaining compact operators from an $L_1(\mu)$ space are dense in the space of compact operators was proved in \cite[p.~6]{Diestel-Uhl-Rocky}.

Let us state these two results.

\begin{corollary}[\mbox{\rm \cite[Proposition~3.2]{JoWo}}]
Let $L$ be a locally compact Hausdorff topological space. Then, $X=C_0(L)$ satisfies the hypotheses of Theorem~\ref{thm:suf-density}. Therefore, every compact linear operator whose domain is $C_0(L)$ can be approximated by finite rank norm attaining operators.
\end{corollary}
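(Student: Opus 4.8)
The plan is to verify the hypothesis of Corollary~\ref{corollary-Lemma31JohnsonWolfe}: for every finite family $\mu_1,\dots,\mu_n\in B_{X^*}$ and every $\eps>0$ I will produce a norm-one finite-rank projection $P\in\mathcal{L}(C_0(L),C_0(L))$ with $\max_i\|\mu_i-P^*\mu_i\|<\eps$. As recorded in the discussion following Corollary~\ref{corollary-Lemma31JohnsonWolfe}, the mere existence of such projections forces $X^*$ to have the approximation property and forces $P^*(X^*)\subset\NA(X)$. Taking $y_i^*:=P^*\mu_i\in B_{X^*}$ then gives $\|\mu_i-y_i^*\|<\eps$ and $\Lin\{y_1^*,\dots,y_n^*\}\subset P^*(X^*)\subset\NA(X)$, which is exactly the hypothesis of Theorem~\ref{thm:suf-density}; the approximation property of $X^*$ then yields the assertion about compact operators. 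So everything reduces to the construction of~$P$.

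I identify $X^*=C_0(L)^*$ with the space $M(L)$ of signed regular Borel measures under the total variation norm (Riesz representation). Put $\lambda=\sum_{i=1}^n|\mu_i|$, a finite positive regular measure, and write $\mu_i=g_i\lambda$ with $g_i\in L_1(\lambda)$, $\|g_i\|_1\le1$ (Radon--Nikod\'ym). By density of simple functions in $L_1(\lambda)$ I first fix a finite Borel partition of $L$ so fine that the associated conditional expectations approximate every $g_i$ to within $\eps$ in $L_1(\lambda)$. The decisive step is then to replace the Borel pieces by \emph{pairwise disjoint compact cores}: by inner regularity I shrink each piece of positive measure to a compact subset $K_r$, discarding only ``junk'' of arbitrarily small $\lambda$-measure, so that the $K_r$ are pairwise disjoint and $\lambda\bigl(L\setminus\bigcup_r K_r\bigr)$ is negligible (this costs at most $\int_{\mathrm{junk}}|g_i|\,d\lambda$, small by absolute continuity).

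Around the cores I use outer regularity to choose pairwise disjoint, relatively compact open sets $U_r\supset K_r$ with $\lambda(U_r\setminus K_r)$ small, and Urysohn functions $\psi_r\in C_c(L)$ with $0\le\psi_r\le1$, $\psi_r\equiv1$ on $K_r$, and $\supp\psi_r\subset U_r$. Writing $\sigma_r$ for the probability measure $\lambda|_{K_r}/\lambda(K_r)$, I set
$$
Pf=\sum_r\Bigl(\int f\,d\sigma_r\Bigr)\psi_r .
$$
Disjointness of supports gives $\int\psi_k\,d\sigma_r=\delta_{kr}$, so $P$ is a genuine rank-$m$ projection; and $\sum_r\psi_r\le1$ together with $\|\sigma_r\|=1$ yields $\|P\|\le1$. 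Since $P^*\mu_i=\sum_r\bigl(\int\psi_r\,d\mu_i\bigr)\sigma_r$, this measure differs from $\sum_r\mu_i(K_r)\sigma_r$ by at most $\sum_r|\mu_i|(U_r\setminus K_r)$ in total variation, while $\sum_r\mu_i(K_r)\sigma_r$ is a conditional expectation of $\mu_i$ and hence within $\eps$ of $\mu_i$ by the estimate above; tuning the parameters gives $\|\mu_i-P^*\mu_i\|<\eps$.

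The only genuinely delicate point is the one flagged above. A continuous function cannot jump between prescribed values on adjacent Borel pieces, so the naive peaked partition-of-unity projection $f\mapsto\sum f(t_r)\psi_r$ has an \emph{atomic} adjoint and cannot approximate a non-atomic $\mu_i$ in total variation; this is why one must build $P$ from averaging measures $\sigma_r$ rather than point evaluations. Passing to pairwise disjoint compact cores (inner regularity) together with tight open neighbourhoods (outer regularity) is precisely what reconciles the $L_1$-fineness needed for the approximation with the topological separation needed both to produce the continuous peak functions $\psi_r$ and to keep $\|P\|=1$. Finally, the ``moreover'' part is automatic: $M(L)$ is an abstract $L_1$-space and therefore has the (metric) approximation property, so every compact operator out of $C_0(L)$ is approximable by finite-rank ones and the first part applies.
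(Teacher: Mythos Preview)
Your proof is correct and follows essentially the same route as the paper. The paper does not give an independent argument for this corollary; it simply observes that Corollary~\ref{corollary-Lemma31JohnsonWolfe} applies because \cite[Proposition~3.2]{JoWo} produces, for any finite set in $C_0(L)^*$, a norm-one finite-rank projection whose adjoint approximates that set. What you have done is to spell out the Johnson--Wolfe construction (dominating measure $\lambda$, fine Borel partition, inner-regular compact cores $K_r$, disjoint open shells $U_r$, Urysohn functions $\psi_r$, and the averaging projection $Pf=\sum_r\bigl(\int f\,d\sigma_r\bigr)\psi_r$) rather than cite it, and then funnel the conclusion through Corollary~\ref{corollary-Lemma31JohnsonWolfe} into Theorem~\ref{thm:suf-density} exactly as the paper does.
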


\begin{corollary}[\mbox{\rm extension of \cite[p.~6]{Diestel-Uhl-Rocky}}]
Let $\mu$ be a positive measure. Then, there is a dense linear subspace of $L_1(\mu)^*$ which is contained in $\NA(L_1(\mu))$. As a consequence, every compact linear operator whose domain is $L_1(\mu)$ can be approximated by finite rank norm attaining operators.
\end{corollary}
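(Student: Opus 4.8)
The plan is to produce the promised dense subspace explicitly as the step functions and then invoke Corollary~\ref{coro:denselineability=>NA(X,F)dense}. The first move is a reduction. Since both the property ``$\NA(\cdot)$ contains a dense norming subspace of the dual'' and norm attainment itself are invariants under isometric isomorphism (a surjective isometry $\Phi\mycolon L_1(\mu)\longrightarrow L_1(\nu)$ induces a surjective isometry $\Phi^*\mycolon L_1(\nu)^*\longrightarrow L_1(\mu)^*$ carrying $\NA(L_1(\nu))$ onto $\NA(L_1(\mu))$ and dense subspaces onto dense subspaces), I may replace $\mu$ by a \emph{localizable} measure $\nu$ with $L_1(\mu)\cong L_1(\nu)$, which exists by Maharam's theorem as already remarked before the statement. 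The gain is twofold: for localizable $\nu$ one has the clean duality $L_1(\nu)^*=L_\infty(\nu)$, and $\nu$ is in particular semifinite, which is exactly what will make step functions attain their norms.

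So I assume $\mu$ is localizable and let $S\subset L_\infty(\mu)=L_1(\mu)^*$ be the linear subspace of step functions, i.e.\ those $g\in L_\infty(\mu)$ taking only finitely many values. I would verify the two needed properties of $S$. Density of $S$ in $L_\infty(\mu)$ is elementary and holds for any measure: given $g\in L_\infty(\mu)$ and $\eps>0$, partition $[-\|g\|_\infty,\|g\|_\infty]$ into finitely many intervals of length $<\eps$ and let $s$ take a constant value in each interval on the corresponding (measurable) preimage; then $\|g-s\|_\infty<\eps$. The inclusion $S\subset\NA(L_1(\mu))$ is where semifiniteness enters. Writing a nonzero $g=\sum_{k=1}^m c_k\mathbf 1_{A_k}$ with the $A_k$ disjoint of positive measure and the $c_k$ distinct, one has $\|g\|_\infty=\max_k|c_k|$, attained at some index $k_0$; by semifiniteness $A_{k_0}$ contains a set $B$ with $0<\mu(B)<\infty$, and then $f=\sign(c_{k_0})\,\mu(B)^{-1}\mathbf 1_B\in S_{L_1(\mu)}$ satisfies $\int g f\,d\mu=|c_{k_0}|=\|g\|_\infty$, so $g$ attains its norm. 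This produces the dense subspace $S$ contained in $\NA(L_1(\mu))$, which is the first assertion (transferred back to $L_1(\mu)^*$ through $\Phi^*$ in the non-localizable case).

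For the ``as a consequence'' part I would invoke the \emph{moreover} clause of Corollary~\ref{coro:denselineability=>NA(X,F)dense}, which additionally requires that $X^*=L_\infty(\mu)$ have the approximation property. This I would justify by recalling that $L_\infty(\mu)$ is an abstract $M$-space with unit, hence isometric to a space $C(K)$ by Kakutani's representation theorem, and $C(K)$-spaces enjoy the (metric) approximation property. With the dense norming subspace and the approximation property of the dual both in hand, Corollary~\ref{coro:denselineability=>NA(X,F)dense} yields that every compact operator with domain $L_1(\mu)$ is a limit of finite rank norm attaining operators.

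The main obstacle is not any single hard computation but the careful handling of non-$\sigma$-finite measures: for a general $\mu$ the identification $L_1(\mu)^*=L_\infty(\mu)$ may fail, and a step function need not attain its norm when the level set $\{|g|=\|g\|_\infty\}$ carries only infinite measure with no finite positive subset. Both difficulties are dissolved by the Maharam reduction to a localizable, hence semifinite, measure, after which every step above is routine.
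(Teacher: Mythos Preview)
Your proof is correct and follows essentially the same route as the paper: reduce to a localizable measure via Maharam's theorem, exhibit the step functions as a dense linear subspace of $L_\infty(\mu)=L_1(\mu)^*$ contained in $\NA(L_1(\mu))$, and then invoke Corollary~\ref{coro:denselineability=>NA(X,F)dense}. You go slightly further than the paper by spelling out the semifiniteness argument for norm attainment and by explicitly justifying the approximation property of $L_\infty(\mu)$ through the Kakutani representation as a $C(K)$-space, details the paper leaves implicit.
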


Another known case in which Corollary~\ref{corollary-Lemma31JohnsonWolfe} applies is the case of isometric preduals of $\ell_1$ \cite[Corollary~3.8]{M-RACSAM}. Here, we are also able to get dense lineability of the set of norm attaining functionals.

\begin{corollary}[\mbox{\rm extension of \cite[Corollary~3.8]{M-RACSAM}}]
Let $X$ be an isometric predual of $\ell_1$. Then, there is a norm dense linear subspace of $X^*$ contained in $\NA(X)$. Therefore, for every Banach space $Y$, every compact operator $T\in \mathcal{L}(X,Y)$ can be approximated by finite rank norm attaining operators.
\end{corollary}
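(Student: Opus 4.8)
The plan is to deduce everything from Corollary~\ref{coro:denselineability=>NA(X,F)dense}. That corollary tells us that once we exhibit a norm dense linear subspace of $X^*$ contained in $\NA(X)$, then for every $Y$ all finite rank operators out of $X$ are approximable by norm attaining finite rank operators, and --- provided $X^*$ has the approximation property --- the same holds for every compact operator. Since here $X^*=\ell_1$ carries a monotone basis and hence the (metric) approximation property, the whole statement reduces to the single task of producing such a dense subspace of $\NA(X)$; the compact operator conclusion is then immediate.

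To build the subspace I would use the mechanism recorded just after Corollary~\ref{corollary-Lemma31JohnsonWolfe}: if $P\in\mathcal L(X,X)$ is a norm-one finite rank projection, then $P^*(X^*)\subset \NA(X)$, because for $x^*\in X^*$ one has $[P^*x^*](B_X)=x^*(P(B_X))=x^*(B_{P(X)})$, a compact set, so that the supremum defining $\|P^*x^*\|$ is attained at some point of $P(B_X)\subset B_X$. Thus every range $P^*(X^*)$ is automatically a finite-dimensional subspace of $\NA(X)$. The idea is therefore to find an increasing sequence $(P_k)$ of norm-one finite rank projections on $X$ whose adjoint ranges exhaust $X^*$, and to set
$$
D=\bigcup_{k}P_k^*(X^*).
$$
Because the projections are nested, $P_{k+1}P_k=P_k$ forces $P_k^*(X^*)\subset P_{k+1}^*(X^*)$, so $D$ is genuinely a linear subspace; it lies in $\NA(X)$ by the previous observation; and it is norm dense exactly when $P_k^*\to\Id_{X^*}$ strongly, i.e.\ when the decomposition is \emph{shrinking}.

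The key step is therefore the structural input: a separable isometric $\ell_1$-predual admits an increasing sequence of norm-one finite rank projections converging strongly to the identity and with shrinking adjoints. For the model cases this is transparent: for $X=c_0$ (resp.\ $X=c$) the coordinate truncations $P_k$ are norm-one projections whose adjoint ranges are the spans of finitely many unit vectors $e_n^*$ of $\ell_1$ (resp.\ together with the limit functional), and these exhaust the finitely supported functionals, which are dense in $\ell_1$; equivalently, every finitely supported functional attains its norm by interpolating signs on finitely many coordinates. In general one uses that the extreme points of $B_{X^*}$ form, up to sign, the unit vector basis $\{\varphi_n\}$ of $\ell_1=X^*$, so that for a finite set $F$ the evaluation map $J_F=(\varphi_n)_{n\in F}\colon X\to \ell_\infty^{|F|}$ has isometric adjoint and is hence a metric surjection; the content of the structure theory of $\ell_1$-preduals, which also underlies \cite[Corollary~3.8]{M-RACSAM}, is that these maps can be organised into a nested, norm-one, shrinking scheme.

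The main obstacle is precisely this last point: guaranteeing that the approximating norm-one finite rank projections furnished by the Johnson--Wolfe property (Corollary~\ref{corollary-Lemma31JohnsonWolfe}), which a priori need not be mutually compatible, can be chosen \emph{increasing} and \emph{shrinking}. Arbitrary norm-one projections give only a union of finite-dimensional pieces of $\NA(X)$, and a union of subspaces need not be a subspace; the nestedness is exactly what upgrades ``the Johnson--Wolfe approximation property'' to ``a single dense linear subspace consisting of norm attaining functionals''. I expect this to follow from the monotone shrinking finite-dimensional decomposition carried by separable $\ell_1$-preduals --- each finite block being $1$-complemented since finite-dimensional $\ell_\infty^n$-spaces are injective --- but assembling the blocks compatibly, rather than merely approximately, is the delicate part of the argument.
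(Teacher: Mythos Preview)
Your approach is exactly the paper's: reduce everything to Corollary~\ref{coro:denselineability=>NA(X,F)dense} (noting $X^*=\ell_1$ has the approximation property), and take $D=\bigcup_n Q_n^*(X^*)$ for a nested sequence of norm-one finite rank projections $Q_n$ on $X$ with $Q_n^*\to \Id_{X^*}$ pointwise. The ``delicate part'' you flag is dispatched in the paper simply by citing the proof of \cite[Corollary~3.8]{M-RACSAM}, which (via a 2002 result of Gasparis) furnishes precisely such a projection scheme on any isometric $\ell_1$-predual; no further assembly work is needed. One small correction: the nesting identity that forces $P_k^*(X^*)\subset P_{k+1}^*(X^*)$ is $P_kP_{k+1}=P_k$ (take adjoints to get $P_{k+1}^*P_k^*=P_k^*$), not $P_{k+1}P_k=P_k$; for the FDD-type projections in question both identities hold, so this does not affect the argument.
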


\begin{proof}
We just have to justify the existence of  a dense linear subspace of $X^*$ contained in $\NA(X)$, the rest of the results follows from Corollary~\ref{coro:denselineability=>NA(X,F)dense}. Indeed, it is shown in the proof of \cite[Corollary~3.8]{M-RACSAM} (based on results by Gasparis from 2002) that there is a sequence of finite rank norm-one projections $Q_n\in \mathcal{L}(X,X)$ such that the sequence $\{Q_n^*\}_{n\in \N}$ has increasing ranges and converges pointwise to the identity of~$X^*$. Then, $\bigcup\nolimits_{n\in \N} Q_n^*(X^*)$ is a subspace contained in $\NA(X)$ since each $Q_n$ is a norm-one projection of finite rank; this subspace is dense   by the pointwise convergence of $\{Q_n^*\}$ to the identity.
\end{proof}

Another easy case in which Corollary~\ref{corollary-Lemma31JohnsonWolfe} applies is when a Banach space $X$ has a shrinking monotone Schauder basis \cite[Corollary~3.10]{M-RACSAM} but, actually, the result follows from  Corollary~\ref{coro:denselineability=>NA(X,F)dense} as $\NA(X)$ contains a dense linear subspace in this case \cite[Theorem~3.1]{AcoAizAronGar}.

\begin{corollary}[\mbox{\rm \cite[Corollary~3.10]{M-RACSAM} and \cite[Theorem~3.1]{AcoAizAronGar}}]
Let $X$ be a Banach space. If $X$ has a shrinking monotone Schauder basis, then $\NA(X)$ contains a dense linear subspace. Therefore, every compact operator whose domain is $X$ can be approximated by norm attaining finite rank operators.
\end{corollary}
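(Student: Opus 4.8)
The plan is to verify that a Banach space $X$ with a shrinking monotone Schauder basis satisfies the hypotheses of Corollary~\ref{coro:denselineability=>NA(X,F)dense}, namely that $\NA(X)$ contains a norm dense linear subspace; once this is established, both the density of norm attaining finite rank operators and the approximation of compact operators follow immediately from that corollary, since a shrinking basis forces $X^*$ to have a basis and hence the approximation property. The heart of the matter is therefore the claim that a dense linear subspace of $X^*$ consists of norm attaining functionals, which is exactly \cite[Theorem~3.1]{AcoAizAronGar}, so strictly speaking I may simply cite it; but let me sketch why it holds so that the structure is transparent.

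First I would fix the basis $\{e_n\}$ of $X$ with coordinate functionals $\{e_n^*\}\subset X^*$ and let $P_n\mycolon X\longrightarrow X$ denote the natural basis projections onto $\Lin\{e_1,\ldots,e_n\}$. Monotonicity of the basis gives $\|P_n\|=1$ for all $n$, so each $P_n$ is a norm-one finite rank projection. The adjoints $P_n^*$ are then norm-one projections on $X^*$ as well, and the crucial consequence of the basis being \emph{shrinking} is precisely that $\{P_n^*\}$ converges pointwise to the identity of $X^*$; equivalently, $\{e_n^*\}$ is a Schauder basis of $X^*$. With this in hand the situation is identical to the predual-of-$\ell_1$ argument given just above: the ranges $P_n^*(X^*)=\Lin\{e_1^*,\ldots,e_n^*\}$ are increasing, their union is dense in $X^*$ by the pointwise convergence, and each range is contained in $\NA(X)$.

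The one step deserving genuine justification is why $P_n^*(X^*)\subset \NA(X)$. The clean reason, reused verbatim from the earlier corollary, is that for any $x^*\in X^*$ the functional $P_n^*(x^*)$ acts on $B_X$ through $P_n$, that is $[P_n^*(x^*)](B_X)=x^*(P_n(B_X))$, and since $P_n$ has finite rank with $\|P_n\|=1$ we have that $P_n(B_X)=B_{P_n(X)}$ is a compact (finite-dimensional) ball; a functional whose action on $B_X$ ranges over a compact set attains its supremum, so $P_n^*(x^*)\in \NA(X)$. Hence $\bigcup_{n} P_n^*(X^*)$ is a dense linear subspace of $X^*$ contained in $\NA(X)$, which is what Corollary~\ref{coro:denselineability=>NA(X,F)dense} requires.

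I do not expect any serious obstacle: the only point that is not purely formal is the identification of ``shrinking'' with the pointwise convergence $P_n^*\to \Id_{X^*}$, which is a standard characterization of shrinking bases, and the fact that this convergence also yields the approximation property of $X^*$ (indeed a Schauder basis) so that the ``moreover'' hypothesis of Corollary~\ref{coro:denselineability=>NA(X,F)dense} is met and the compact-operator conclusion applies. Everything else is a direct appeal to the already-proved corollary, so the proof reduces to citing \cite[Theorem~3.1]{AcoAizAronGar} for the dense lineability and invoking Corollary~\ref{coro:denselineability=>NA(X,F)dense}.
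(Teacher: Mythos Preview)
Your proposal is correct and follows exactly the route the paper takes: the paper simply cites \cite[Theorem~3.1]{AcoAizAronGar} for the dense lineability of $\NA(X)$ and then invokes Corollary~\ref{coro:denselineability=>NA(X,F)dense}, noting that the shrinking basis provides the approximation property of $X^*$. You do the same, additionally spelling out the (correct) mechanism behind the cited result via the basis projections, which mirrors verbatim the argument the paper gives for preduals of $\ell_1$.
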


This applies, in particular, to closed subspaces of $c_0$ with a monotone Schauder basis, as shown in \cite[Corollary~12]{M-JFA2014} using a  result of G.~Godefroy and P.~Saphar from 1988.

\begin{example}[\mbox{\rm \cite[Corollary~12]{M-JFA2014}}]\label{example:subspace_c0_monotoneSchauder}
Let $X$ be a closed subspace of $c_0$ with a monotone Schauder basis. Then, $\NA(X)$ contains a dense linear subspace. Therefore, every compact operator whose domain is $X$ can be approximated by norm attaining finite rank operators.
\end{example}

Next we get the following result as an obvious consequence of Corollary~\ref{coro:denselineability=>NA(X,F)dense} (and the Bishop-Phelps theorem).

\begin{corollary}\label{coro:NAlinear}
Let $X$ be a Banach space. If $\NA(X)$ is a linear subspace of $X^*$, then finite rank operators with domain  $X$ can be approximated by finite rank norm attaining operators.

If, moreover, $X^*$ has the approximation property, then actually compact operators with domain  $X$ can be approximated by finite rank norm attaining operators.
\end{corollary}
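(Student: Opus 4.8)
The plan is to deduce this corollary directly from Corollary~\ref{coro:denselineability=>NA(X,F)dense} by supplying the one missing ingredient: a dense linear subspace of $X^*$ contained in $\NA(X)$. Under the hypothesis that $\NA(X)$ is itself a linear subspace of $X^*$, the natural candidate for such a dense subspace is $\NA(X)$ itself. So the whole argument reduces to verifying that $\NA(X)$ is norm dense in $X^*$.

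This is exactly where the Bishop--Phelps theorem enters, as the parenthetical remark in the statement signals. First I would recall that the Bishop--Phelps theorem asserts precisely that $\NA(X)$ is dense in $X^*$ for every Banach space~$X$. Since we are assuming in addition that $\NA(X)$ is a linear subspace, it is a \emph{dense linear subspace} of $X^*$ consisting of norm attaining functionals, which is literally the hypothesis required by Corollary~\ref{coro:denselineability=>NA(X,F)dense}. Applying that corollary then immediately yields that, for every Banach space $Y$, every finite rank operator $T \in \mathcal{L}(X,Y)$ can be approximated by finite rank norm attaining operators.

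For the moreover part, I would simply invoke the second assertion of Corollary~\ref{coro:denselineability=>NA(X,F)dense}: once $X^*$ is additionally assumed to have the approximation property, the same dense linear subspace $\NA(X) \subset X^*$ now lets us conclude that every \emph{compact} operator with domain $X$ can be approximated by finite rank norm attaining operators. No new construction is needed; one only transports the already-established hypotheses into the stronger conclusion of the cited corollary.

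I do not expect any genuine obstacle here, since the corollary is essentially a one-line reduction. The only subtlety worth flagging is a purely formal one: one must make sure the density is in the \emph{norm} topology (which is what both Bishop--Phelps and Corollary~\ref{coro:denselineability=>NA(X,F)dense} use), and that being a linear subspace together with norm density is exactly the phrase ``norm dense linear subspace of $X^*$ contained in $\NA(X)$'' appearing in the hypothesis of Corollary~\ref{coro:denselineability=>NA(X,F)dense}. Once that matching is made explicit, both conclusions follow verbatim from the cited result.
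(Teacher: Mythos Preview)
Your proposal is correct and matches the paper's own reasoning exactly: the paper presents this corollary as ``an obvious consequence of Corollary~\ref{coro:denselineability=>NA(X,F)dense} (and the Bishop-Phelps theorem)'' without giving a separate proof, and your argument spells out precisely that one-line reduction.
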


Of course, the result above applies to $c_0$, but also when $X$ is a finite-codimensional proximinal subspace of $c_0$, as shown in \cite[Remark~b on p.~180]{GodIndu1999}. Besides, the non-commutative case also holds: $\NA(\mathcal{K}(\ell_2))$ is also a linear space (see \cite[Lemma]{GodIndPic}), so Corollary~\ref{coro:NAlinear} applies to it. Moreover, this linearity property of the set of norm attaining operators passes down to every finite-codimensional proximinal subspace of $\mathcal{K}(\ell_2)$, see \cite[Section~3]{GodIndu1999}. Finally, if $X$ is a $c_0$-sum of reflexive spaces, then clearly $\NA(X)$ is a linear subspace of $X^*$. Let us state all the examples we have presented so far.

\begin{examples}
{\slshape The following spaces satisfy that their sets of norm attaining functionals are vector spaces:
\begin{enumerate}
  \item[(a)] $c_0$ and its finite-codimensional proximinal subspaces;
  \item[(b)] $\mathcal{K}(\ell_2)$ and its finite-codimensional proximinal subspaces;
  \item[(c)] $c_0$-sums of reflexive spaces.
\end{enumerate}
Therefore, a finite rank operator whose domain is any of the spaces above can be approximated by norm attaining finite rank operators.}
\end{examples}

In the simplest case of closed subspaces of $c_0$, we do not know whether the hypothesis of finite codimension or the hypothesis of proximinality can be dropped in (a) above. What  is easy to show is that there is a closed hyperplane of $c_0$ whose set of norm attaining functionals is not a vector space (see \cite[Remark~b on p.~180]{GodIndu1999} again).

\begin{problem}
Let $X$ be a closed subspace of $c_0$. Is it true that every finite rank operator whose domain is $X$ can be approximated by norm attaining (finite rank) operators?
\end{problem}

Let us note that there are compact operators whose domains are closed subspaces of $c_0$ which cannot be approximated by norm attaining operators \cite[Proposition~3]{M-JFA2014}.

Corollary~\ref{corollary-Lemma31JohnsonWolfe} depends heavily on the fact that the norm of the projections is~$1$ and fails if one considers renormings. By contrast, Theorem~\ref{thm:suf-density} and its consequence Corollary~\ref{coro:denselineability=>NA(X,F)dense} only depend on  the set of norm attaining functionals itself, so both remain valid for renormings which conserve this set. In \cite[Theorem~9.(4)]{DebsGodSR}, it is shown that every separable Banach space $X$ admits a smooth renorming $\widetilde{X}$ such that $\NA(X)=\NA(\widetilde{X})$, and this result has recently been extended  to weakly compactly generated spaces (WCG spaces) \cite[Proposition~2.3]{GuiMonZiz}. Therefore, if Theorem~\ref{thm:suf-density} applies for a WCG space $X$, then so it does for the corresponding~$\widetilde{X}$. In particular, we get the following examples.

\begin{examples}
  {\slshape Let $X$ be a WCG Banach space which is equal to $C_0(L)$, is equal to $L_1(\mu)$, satisfies that $X^*=\ell_1$, or  is a finite-codimensional proximinal subspace of $c_0$ or of $\mathcal{K}(\ell_2)$.
(In the latter cases, $X$ is of course separable.) Let $\widetilde{X}$ be the equivalent smooth renorming of $X$  given in \cite[Proposition~2.3]{GuiMonZiz} such that $\NA(\widetilde{X})=\NA(X)$. Then, every compact operator whose domain is $\widetilde{X}$ can be approximated by norm attaining (for the norm of $\widetilde{X}$) finite rank operators.}
\end{examples}

We do not even know whether the particular case of $\widetilde{c_0}$ can be deduced from previously known results.

Although it is not directly related to finite rank operators, we would like to finish the section by providing a condition which extends the known result by Lindenstrauss \cite[Theorem~1]{Lin} that reflexive spaces have property~(A), that is, an operator whose domain is a reflexive space can be approximated by norm attaining operators (this fact is actually used in the proof below).

\begin{proposition}\label{prop:kerbotinNA-density}
Let $X, Y$ be Banach spaces. Then, every operator $T\in \mathcal{L}(X,Y)$ for which $[\ker T]^\bot\subset \NA(X)$ can be approximated by norm attaining operators (whose kernels contain $\ker T$).
\end{proposition}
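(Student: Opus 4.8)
The plan is to reduce the problem to Lindenstrauss's theorem that reflexive spaces enjoy property~(A), and then to transport norm attainment back to $X$ by the very device used in the proof of Proposition~\ref{prop:kerbotinNAimpliesNA}. Write $Z = \ker T$. Since $Z^\bot = [\ker T]^\bot \subset \NA(X)$, James's theorem yields that $X/Z$ is reflexive (this is the first line of the proof of Proposition~\ref{prop:kerbotinNAimpliesNA}). As $Z \subset \ker T$, the operator factors as $T = \widetilde{T}\circ q$, where $q\mycolon X \longrightarrow X/Z$ is the quotient map and $\widetilde{T}\mycolon X/Z \longrightarrow Y$ satisfies $\|\widetilde{T}\| = \|T\|$.

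Because $X/Z$ is reflexive, Lindenstrauss's theorem \cite[Theorem~1]{Lin} gives that $\NA(X/Z,Y)$ is dense in $\mathcal{L}(X/Z,Y)$. Hence, for each $\eps>0$ I can pick $\widetilde{S}\in \NA(X/Z,Y)$ with $\|\widetilde{S}-\widetilde{T}\|<\eps$ and set $S = \widetilde{S}\circ q \in \mathcal{L}(X,Y)$. Since $\|q\|\le 1$, one has $\|S - T\| = \|(\widetilde{S}-\widetilde{T})\circ q\| \le \|\widetilde{S}-\widetilde{T}\| < \eps$, and clearly $\ker S \supset \ker q = Z = \ker T$. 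It then remains to check that each such $S$ attains its norm.

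For this I would argue exactly as in Proposition~\ref{prop:kerbotinNAimpliesNA}. First, $\widetilde{S}$ attains its norm at some $\widetilde{\xi}\in S_{X/Z}$; choosing by Hahn--Banach a functional $y^*\in S_{Y^*}$ with $y^*(\widetilde{S}\widetilde{\xi}) = \|\widetilde{S}\widetilde{\xi}\| = \|\widetilde{S}\|$, one immediately gets $\|\widetilde{S}^*y^*\|\ge \bigl|[\widetilde{S}^*y^*](\widetilde{\xi})\bigr| = \|\widetilde{S}\| = \|\widetilde{S}^*\|$, so the adjoint $\widetilde{S}^*$ attains its norm at $y^*$ (note that no compactness is needed here). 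Now $x^* := S^*y^* = q^*(\widetilde{S}^*y^*)$ lies in $Z^\bot$, and since $q^*$ is an isometric embedding we have $\|x^*\| = \|\widetilde{S}^*y^*\| = \|S\|$. As $x^*\in Z^\bot\subset \NA(X)$, there is $x\in S_X$ with $|x^*(x)| = \|x^*\| = \|S\|$; then $\|S\| = \bigl|[S^*y^*](x)\bigr| = |y^*(Sx)| \le \|Sx\| \le \|S\|$, forcing $\|Sx\| = \|S\|$. Thus $S$ attains its norm, which finishes the proof.

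The main point to get right is the last step: the factor $X/Z$ is reflexive, but $Z$ need not be proximinal in $X$ (the excerpt stresses that (c) does not imply (b)), so $q(B_X)$ is in general only \emph{dense} in $B_{X/Z}$ and one cannot simply lift the point $\widetilde{\xi}$ at which $\widetilde{S}$ attains its norm to a norm-one vector of $X$. Passing to adjoints and landing the resulting functional in $Z^\bot\subset\NA(X)$ is precisely what circumvents this difficulty, and it is the one place where the hypothesis $[\ker T]^\bot\subset\NA(X)$ is genuinely used.
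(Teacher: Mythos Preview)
Your proof is correct and follows essentially the same approach as the paper: factor through the reflexive quotient $X/Z$, approximate $\widetilde{T}$ by norm attaining operators via Lindenstrauss's property~(A), and then pull norm attainment back to $X$ using the adjoint-and-$Z^\bot$ argument of Proposition~\ref{prop:kerbotinNAimpliesNA}. In fact you spell out the last step in more detail than the paper (which merely says ``the same argument as the one given in Proposition~\ref{prop:kerbotinNAimpliesNA}''); the one small point left implicit is that $\|S\|=\|\widetilde{S}\|$, which follows since $q$ is a quotient map, but this is routine.
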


\begin{proof}
We follow the lines of the proof of Proposition~\ref{prop:kerbotinNAimpliesNA}.
As $[\ker T]^\bot\subset \NA(X)$, it is immediate from James's theorem that $X/{\ker T}$ is reflexive (see the proof of \cite[Lemma~2.2]{Band-Godefroy}). Now, $T$ factors through $X/{\ker T}$, that is, there is an operator $\widetilde{T}\mycolon X/{\ker T} \longrightarrow Y$ such that $T=\widetilde{T}\circ q$, and it is clear that $\|\widetilde{T}\|=\|T\|$. By the result of J.~Lindenstrauss just mentioned, \cite[Theorem~1]{Lin}, there is a sequence $\bigl\{\widetilde{S}_n\bigr\}_{n\in \N}$ of norm attaining operators from $X/{\ker T}$ into $Y$ which converges in norm to $\widetilde{T}$. On the one hand, the same argument as the one given in Proposition~\ref{prop:kerbotinNAimpliesNA} allows us to see that for every $n\in \N$, the operator $S_n:= \widetilde{S}_n \circ q\mycolon X\longrightarrow Y$ attains its norm. On the other hand, it is clear that $\|S_n-T\|\leq \|\widetilde{S}_n - \widetilde{T}\|\longrightarrow 0$, so $\{S_n\}_{n\in \N}$ converges to~$T$.
\end{proof}

\section{Norm attaining operators onto a two-dimensional Hilbert space}\label{sect:naoperators}
Our aim in this section is to study the special case when the range space is a (two-dimensional) Hilbert space, where some specific tools can be used, for instance,  we may rotate every point of the unit sphere to any other one. As shown in Remark~\ref{remark:toproblemexistence}, this study is actually equivalent to the study of the existence of norm attaining operators of rank two into all Banach spaces of dimension greater than or equal to two.

We will eventually provide some characterizations of the fact that an operator from a Banach space onto a two-dimensional Hilbert space attains its norm and also a characterization of when norm attaining operators onto a two-dimensional Hilbert space are dense.

Let us observe that the existence of a norm attaining operator $T$ of rank at least~$2$ from a Banach space $X$ to a Hilbert space $H$ gives the existence of a surjective norm attaining operator from $X$ onto $\ell_2^2$ (just composing $T$ with a convenient orthogonal projection).

Let $T\mycolon X\longrightarrow \ell_2^2$ be an operator of rank two. One can identify $T$ with a pair of linearly independent functionals $(f,g)\in X^*\times X^*$. Throughout this section, we will make this identification without further reference. Note that, obviously, $\|(\sigma f, \sigma' g)\|\le \|(f,g)\|$ if $|\sigma|, |\sigma'|\le 1$ so, in particular,
$$
\|(\pm f, \pm g)\|=\|(f,g)\| \quad \text{ and } \quad \max\{\|f\|,\|g\|\} \le \|(f,g)\|.
$$
We will also use these facts frequently in this section without recalling them.

Our first goal in this section is to characterize when $\|(f,g)\|\le 1 $ in terms of the functionals $f$ and $g$, especially in the case when $\|f\|=1$ and $f$, $g$ are linearly independent. We next will use this idea to produce pairs of functionals of this form.

The desired characterization of when $\Vert(f,g)\Vert\leq 1$ is the following.

\begin{proposition}\label{estimation2}
  Let $X$ be a Banach space, and let $f\in S_{X^*}$ and $g\in B_{X^*}$ be linearly independent.
  Then, $\|(f,g)\|\le 1$ if and only if
  \begin{equation}\label{root}
  \Vert f+tg\Vert \leq \sqrt{1+t^2}\qquad\mbox{for all }t\in \R .
  \end{equation}
\end{proposition}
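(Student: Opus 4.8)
The plan is to characterize $\|(f,g)\|\le 1$ by unwinding the definition of the operator norm for $T=(f,g)\colon X\longrightarrow \ell_2^2$. Recall that for $x\in X$ we have $Tx=(f(x),g(x))\in \ell_2^2$, so
$$
\|(f,g)\| = \sup_{x\in B_X}\sqrt{f(x)^2 + g(x)^2}.
$$
Thus the condition $\|(f,g)\|\le 1$ is equivalent to requiring $f(x)^2+g(x)^2\le 1$ for all $x\in B_X$. The natural idea is to dualize: the norm of a vector in $\ell_2^2$ can be computed by testing against unit vectors of the dual Hilbert space $(\ell_2^2)^*\cong \ell_2^2$. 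Concretely, for each direction parametrized by $\theta$, the linear functional $(a,b)\mapsto a\cos\theta + b\sin\theta$ has norm one on $\ell_2^2$, and its composition with $T$ is the functional $\cos\theta\, f + \sin\theta\, g \in X^*$. So I would first record the identity
$$
\|(f,g)\| = \sup_{\theta\in[0,2\pi)} \|\cos\theta\, f + \sin\theta\, g\|.
$$

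Granting this, the condition $\|(f,g)\|\le 1$ becomes $\|\cos\theta\, f + \sin\theta\, g\|\le 1$ for all $\theta$, and the whole proof reduces to translating between the angular parametrization $\cos\theta\, f + \sin\theta\, g$ and the affine parametrization $f+tg$ appearing in \eqref{root}. For the forward direction, given $t\in\R$ I would choose $\theta$ with $\cos\theta = 1/\sqrt{1+t^2}$ and $\sin\theta = t/\sqrt{1+t^2}$; then $\cos\theta\,f+\sin\theta\,g = (f+tg)/\sqrt{1+t^2}$, so $\|\cos\theta\, f + \sin\theta\, g\|\le 1$ gives exactly $\|f+tg\|\le\sqrt{1+t^2}$. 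Conversely, given any $\theta$ with $\cos\theta\ne 0$ I would set $t=\tan\theta$ and reverse this computation; the finitely many angles with $\cos\theta=0$ (where the functional is $\pm g$) are handled separately, using $\|g\|\le 1$ by hypothesis, since these correspond to the limiting case $t\to\pm\infty$ of \eqref{root}, and homogeneity lets me also cover $\cos\theta<0$ via $\|-(f+tg)\|=\|f+tg\|$.

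The one genuine point requiring care, which I expect to be the main obstacle, is justifying the identity $\|(f,g)\| = \sup_\theta \|\cos\theta\, f + \sin\theta\, g\|$ cleanly. This is the statement that the operator norm of $T$ equals the norm of the adjoint $T^*\colon (\ell_2^2)^* \longrightarrow X^*$, together with the observation that $T^*$ sends the unit vector $(\cos\theta,\sin\theta)$ of $(\ell_2^2)^*$ to $\cos\theta\, f + \sin\theta\, g$; thus $\|T\|=\|T^*\|$ is the supremum of $\|T^*(\cos\theta,\sin\theta)\|$ over the unit circle of $(\ell_2^2)^*$, which is precisely the displayed supremum. Since we are working with the Hilbert space $\ell_2^2$, its dual unit ball is again the Euclidean disk and is the closed convex hull of the unit circle, so taking the supremum over the circle suffices; no delicate compactness or reflexivity argument beyond $\|T\|=\|T^*\|$ is needed. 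Once this identity is in hand, the two implications are the short substitution computations described above, and the hypothesis $\|g\|\le 1$ enters only to accommodate the $t\to\pm\infty$ endpoint, while the linear independence and $\|f\|=1$ assumptions are not actually needed for this particular equivalence.
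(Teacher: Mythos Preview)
Your proof is correct and takes a genuinely different route from the paper's. The paper proceeds by choosing $x_0,x_1\in X$ with $f(x_0)=g(x_1)=1$, $f(x_1)=g(x_0)=0$, writing $X=M\oplus\Lin\{x_0,x_1\}$ with $M=\ker f\cap\ker g$, and reformulating $\|(f,g)\|\le1$ as the inequality $\sqrt{\lambda^2+\mu^2}\le\dist(\lambda x_0+\mu x_1,M)$ for all $(\lambda,\mu)$; a separate lemma identifies this distance as a dual norm on $\R^2$, whose predual norm is $|(\lambda,\mu)|^*=\|\lambda f+\mu g\|$, and dualizing the inequality yields~\eqref{root}. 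Your argument bypasses all of this by noting directly that $\|T\|=\|T^*\|$ and that $T^*(\cos\theta,\sin\theta)=\cos\theta\,f+\sin\theta\,g$, so $\|(f,g)\|=\sup_\theta\|\cos\theta\,f+\sin\theta\,g\|$; the substitution $t=\tan\theta$ then gives the equivalence in two lines. Your approach is shorter, avoids the auxiliary distance lemma and the choice of $x_0,x_1$, and (as you observe) makes transparent that the hypotheses $\|f\|=1$ and linear independence play no role in the equivalence itself. The paper's route, on the other hand, sets up explicitly the two-dimensional quotient picture that is reused informally elsewhere in the section.
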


We need the following easy lemma.

\begin{lemma}\label{distance}
  Let $X$ be a Banach space. Fix $z_0\in X$ and linearly independent $f, g\in X^*$, and consider $M=\ker f\cap \ker g$. Then,
  $$
  \dist(z_0, M)=\sup_{(t, s)\in \R^2\setminus\{(0,0)\}}\frac{\vert t f(z_0)+ s g(z_0)\vert}{\Vert t f+ s g\Vert}.
  $$
\end{lemma}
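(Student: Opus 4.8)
The plan is to prove the identity by recognizing that $\dist(z_0, M)$ is a norm computation on the two-dimensional quotient space $X/M$, where the supremum on the right is a dual-norm formula. First I would observe that since $f$ and $g$ are linearly independent, $M = \ker f \cap \ker g$ is a closed subspace of codimension two, and $Z^\bot := M^\bot = \Lin\{f, g\}$ is exactly the two-dimensional span of $f$ and $g$. The key structural fact is that the dual of the quotient $X/M$ is isometrically identified with $M^\bot$ via the adjoint $q^*$ of the quotient map $q\colon X \longrightarrow X/M$; this is the same isometric identification used repeatedly in the proof of Theorem~\ref{Theo2-implication}.

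The main computation is the standard duality formula for distance to a subspace: for any $z_0 \in X$,
$$
\dist(z_0, M) = \sup\Bigl\{ |\varphi(z_0)| \mycolon \varphi \in M^\bot,\ \|\varphi\| \le 1 \Bigr\}.
$$
This is the Hahn--Banach description of the distance and it is where the heart of the argument lies. I would then parametrize the unit ball of $M^\bot = \Lin\{f,g\}$ by writing an arbitrary element as $t f + s g$ with $(t,s) \in \R^2$; the constraint $\|\varphi\| \le 1$ becomes $\|t f + s g\| \le 1$. Since the quantity $|tf(z_0) + sg(z_0)|/\|tf + sg\|$ is positively homogeneous of degree zero in $(t,s)$, maximizing $|\varphi(z_0)|$ over the unit ball of $M^\bot$ is the same as taking the supremum of this ratio over all $(t,s) \in \R^2 \setminus \{(0,0)\}$, which yields precisely the claimed right-hand side.

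The one point requiring a little care is the standard distance--duality formula itself, which I would justify as follows. For the inequality $\dist(z_0, M) \ge |\varphi(z_0)|$ whenever $\varphi \in M^\bot$ with $\|\varphi\| \le 1$: for every $m \in M$ we have $|\varphi(z_0)| = |\varphi(z_0 - m)| \le \|z_0 - m\|$, so $|\varphi(z_0)| \le \dist(z_0, M)$. For the reverse inequality, I would apply the Hahn--Banach theorem to the one-dimensional extension problem on the quotient $X/M$: the element $q(z_0) \in X/M$ has $\|q(z_0)\|_{X/M} = \dist(z_0, M)$, and there exists a norm-one functional $\widetilde\varphi$ on $X/M$ with $\widetilde\varphi(q(z_0)) = \|q(z_0)\|_{X/M}$; setting $\varphi = q^*(\widetilde\varphi) = \widetilde\varphi \circ q \in M^\bot$ gives $\|\varphi\| = 1$ (as $q^*$ is isometric) and $\varphi(z_0) = \dist(z_0, M)$. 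This exhibits the supremum as attained and matches the two bounds.

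I do not anticipate a genuine obstacle here, since the lemma is essentially the duality formula for the norm of a quotient dressed up in the explicit basis $\{f, g\}$ of $M^\bot$; the author's own description calls it "easy." The only thing to verify carefully is the homogeneity argument that lets one replace the supremum over the unit ball $\{tf + sg \mycolon \|tf+sg\| \le 1\}$ by the supremum of the scale-invariant ratio over all nonzero $(t,s)$, together with the observation that linear independence of $f$ and $g$ guarantees $\|tf + sg\| \ne 0$ for $(t,s) \ne (0,0)$, so no division by zero occurs.
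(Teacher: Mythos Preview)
Your proposal is correct and follows essentially the same approach as the paper: both identify $M^\bot=\Lin\{f,g\}$ and invoke the Hahn--Banach distance--duality formula $\dist(z_0,M)=\sup\{|\varphi(z_0)|:\varphi\in M^\bot,\ \|\varphi\|\le1\}$, then rewrite this as the scale-invariant ratio over nonzero $(t,s)$. The only cosmetic difference is that the paper phrases the duality via the bidual embedding, writing $\dist(z_0,M)=\|J_X(z_0)|_{M^\bot}\|$, whereas you argue through the quotient map $q$ and its isometric adjoint $q^*$; these are two standard formulations of the same fact.
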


\begin{proof}
  If we consider $z_0=J_X(z_0)$ as an element of $X^{**}$, we have that
  $$
  \dist(z_0,M)=\dist(J_X(z_0),J_X(M))=\Vert {J_X(z_0)}|_{M^{\bot}}\Vert,
  $$
  where ${J_X(z_0)}|_{M^{\bot}}$ denotes the restriction of $J_X(z_0)$ to the subspace  $M^{\bot}$ of $X^*$. But $M^{\bot}$ is the subspace of $X^*$ generated by $f$ and $g$, hence
  $$
  \dist(z_0,M)=
  \sup_{x^*\in M^{\bot}\setminus\{0\}}\frac{\vert x^*(z_0)\vert}{\Vert x^*\Vert }=
  \sup_{(t,s)\in \R^2\setminus\{(0,0)\}}\frac{\vert t f(z_0)+ s g(z_0)\vert}{\Vert t f+s g\Vert},
  $$
  and we are done.
\end{proof}

We can now give the pending proof.

\begin{proof}[Proof of Proposition~\ref{estimation2}]
    As $f$ and $g$ are linearly independent, there are $ x_0,x_1\in X$ such that $f(x_0)=1$, $g(x_0)=0$ and  $g(x_1)=1$, $f(x_1)=0$. We then  have  that $X=M\oplus \Lin\{x_0,x_1\}$ with $M=\ker f \cap \ker g$. Now, for  $ T :=(f,g)$, $\|T\|\le 1$ if,  and only if,
  $$
  \|(\lambda,\mu)\|_2=\Vert T(m+\lambda x_0+\mu x_1)\Vert_2\leq\Vert m+\lambda x_0+\mu x_1\Vert \qquad \mbox{for all }\lambda , \mu \in\R, \ m\in M .
  $$
The above is equivalent to
\begin{equation}   \label{normlessone}
\sqrt{\lambda^2+\mu^2}\leq \dist(\lambda x_0+\mu x_1, M)\qquad \mbox{for all } \lambda , \mu \in\R .
\end{equation}
If we define $\vert (\lambda ,\mu)\vert =\dist(\lambda x_0+\mu x_1, M)$ for all $\lambda,\mu\in \R$, we get a norm on~$\R^2$. Now, using  Lemma~\ref{distance}, we see that
$$
\vert (\lambda ,\mu)\vert= \sup_{(t,s)\in \R^2\setminus\{(0,0)\}}\frac{\vert\lambda t+\mu s\vert}{\Vert tf+sg\Vert}.
$$
We deduce that  the dual norm of the above norm is given by  $\vert(\lambda ,\mu) \vert^*=\Vert \lambda f+\mu g\Vert$ for all $\lambda,\mu \in \R$, since by definition $|\;.\;|$ is the dual norm of $|\;.\;|^*$.
Taking dual norms in the inequality (\ref{normlessone}), we get that this inequality is equivalent to
$$
\Vert \lambda f+\mu g\Vert\leq \sqrt{\lambda^2+\mu^2}\qquad  \mbox{for all } \lambda , \mu \in\R.
$$
The last inequality can be rephrased by saying  that $\Vert g\Vert\leq 1$ and
\[
\Vert f+tg\Vert \leq \sqrt{1+t^2}\qquad \mbox{for all }t\in \R.
\qedhere
\]
\end{proof}

\begin{remark}\label{rem2.3}
We observe that (\ref{root})  implies that $g(x_0)=0$ whenever $f(x_0)=1=\|x_0\|=\|f\|$.
\end{remark}

To facilitate the notation, we introduce the following vocabulary.

\begin{definition}
Given $f\in S_{X^*}$, we call an element $g\in B_{X^*}\setminus\{0\}$ such that $(f,g)$ has rank two and $\|(f,g)\|=1$, a \textit{mate} of~$f$. This is equivalent to requiring that $\Vert f+tg\Vert\leq \sqrt{1+t^2}$ for all $t\in \R$, by Proposition~\ref{estimation2}.
\end{definition}

Observe that if $g\in B_{X^*}\setminus \{0\}$ is a mate of $f\in S_{X^*}$, one has that
$$
\lim_{t\to 0}\frac{\Vert f+tg\Vert-1}{t}=0 \quad \mbox{and}\quad
\limsup_{t\to 0}\frac{\Vert f+tg\Vert-1}{t^2}\leq \frac12<\infty.
$$
The last condition suggests another formulation of the existence of mates, which will be shown next.

\begin{proposition}\label{Limsup}
  Let $X$ be a Banach space and $f\in S_{X^*}$. Then $f$ has a mate
  if and only if there exist $h\in B_{X^*}\setminus\{0\}$  and $K,\varepsilon>0$  such that
  $$
  \Vert f+th\Vert\leq 1+Kt^2\qquad \mbox{for all } t\in (-\varepsilon,\varepsilon),
  $$
  equivalently,
  $$
  \limsup_{t\to 0}\frac{\Vert f+th\Vert-1}{t^2}<\infty.
  $$
  In fact, given $f\in S_{X^*}$ and $h\in B_{X^*}\setminus\{0\}$ such that $\limsup\limits_{t\to 0}\frac{\Vert f+th\Vert-1}{t^2}<\infty$,  there exists $0<s\leq 1$ such that  $sh$ is a mate of~$f$.
\end{proposition}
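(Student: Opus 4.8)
The plan is to treat the two displayed side conditions as trivial reformulations of one another (the bound $\|f+th\|\le 1+Kt^2$ on a punctured neighbourhood of $0$ is precisely the statement that the limsup is finite) and to concentrate on the genuine content, which is the \emph{in fact} part. The easy implication is immediate: if $g$ is a mate of $f$, then by Proposition~\ref{estimation2} we have $\|f+tg\|\le\sqrt{1+t^2}\le 1+\tfrac12 t^2$ for all $t\in\R$, so $h=g$ and $K=\tfrac12$ witness the side condition and the limsup is at most $\tfrac12$. Thus everything reduces to producing, from a given $h\in B_{X^*}\setminus\{0\}$ with $L:=\limsup_{t\to0}\frac{\|f+th\|-1}{t^2}<\infty$, a scalar $0<s\le1$ such that $sh$ is a mate. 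By Proposition~\ref{estimation2}, and since $\|sh\|\le 1$ together with $\|f\|=1$ force $\|(f,sh)\|=1$ as soon as $\|(f,sh)\|\le1$, it suffices to arrange that $f$ and $sh$ are linearly independent and that $\|f+t(sh)\|\le\sqrt{1+t^2}$ for every $t\in\R$. Writing $r=ts$ (a bijection of $\R$ since $s>0$), this last inequality is, after squaring, exactly
$$
s^2\bigl(\|f+rh\|^2-1\bigr)\le r^2\qquad(r\in\R),
$$
so I only need to choose $s$ small enough that this single pointwise estimate holds for all $r$.

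The heart of the matter is to show that
$$
c:=\inf\Bigl\{\tfrac{r^2}{\|f+rh\|^2-1}\mycolon r\neq0,\ \|f+rh\|>1\Bigr\}
$$
is strictly positive; then any $0<s\le\min\{1,\sqrt c\}$ does the job, because for $r$ with $\|f+rh\|\le1$ the boxed inequality is trivial (its left-hand side is $\le0$), while for the remaining $r$ one has $s^2\le c\le r^2/(\|f+rh\|^2-1)$. To bound $c$ from below I would split into two regimes. Fix $K,\varepsilon>0$ with $\|f+rh\|\le1+Kr^2$ for $|r|<\varepsilon$ (this is the hypothesis $L<\infty$). For $0<|r|<\varepsilon$ this yields $\|f+rh\|^2-1\le 2Kr^2+K^2r^4$, whence the ratio is at least $1/(2K+K^2\varepsilon^2)$. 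For $|r|\ge\varepsilon$ I use only the crude bound $\|f+rh\|\le 1+|r|$ coming from $\|h\|\le1$, so that $\|f+rh\|^2-1\le 2|r|+r^2$ and the ratio is at least $|r|/(2+|r|)\ge\varepsilon/(2+\varepsilon)$. Hence $c\ge\min\{1/(2K+K^2\varepsilon^2),\ \varepsilon/(2+\varepsilon)\}>0$.

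It remains to secure the linear independence of $f$ and $sh$, equivalently of $f$ and $h$, and here the finiteness of the limsup is used a second time: if $h=\lambda f$ with $\lambda\neq0$, then $\|f+th\|=\Abs{1+t\lambda}$ and $\frac{\Abs{1+t\lambda}-1}{t^2}=\frac{\lambda}{t}$ for $t$ near $0$, which tends to $+\infty$ along one of the two one-sided limits, forcing $L=+\infty$ and contradicting the hypothesis. Thus $f,h$ are independent, and with the choice of $s$ above, $sh\in B_{X^*}\setminus\{0\}$ is a mate of $f$ by Proposition~\ref{estimation2}. I expect the main obstacle to be the positivity of $c$: the quadratic control near $0$ is exactly what the hypothesis supplies, but one must also notice that \emph{away} from $0$ the trivial triangle-inequality bound already keeps the ratio bounded below, so that no degeneration occurs for large $r$. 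Once this two-regime estimate is in place, the substitution $r=ts$ converts the whole problem into the single pointwise inequality above, and the conclusion follows from Proposition~\ref{estimation2}.
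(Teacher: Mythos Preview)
Your proof is correct. The paper argues by contradiction: assuming that for every $n$ the choice $s=1/n$ fails at some $t_n$, it first uses the triangle inequality $\|f+\frac{t_n}{n}h\|\le 1+\frac{|t_n|}{n}$ together with $\|f+\frac{t_n}{n}h\|>\sqrt{1+t_n^2}$ to force $t_n/n\to 0$, and then rewrites the failure as $\frac{\|f+(t_n/n)h\|^2-1}{(t_n/n)^2}\ge n^2$, contradicting the finiteness of the limsup. Your argument is the direct counterpart of this: you isolate the single quantity $c=\inf_{\|f+rh\|>1} r^2/(\|f+rh\|^2-1)$ and bound it below by splitting into the regimes $|r|<\varepsilon$ (where the hypothesis gives the quadratic control) and $|r|\ge\varepsilon$ (where the crude bound $\|f+rh\|\le 1+|r|$ suffices). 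The two proofs use exactly the same two ingredients in the same two regimes; yours has the advantage of producing an explicit admissible $s$, namely $s=\min\{1,\sqrt c\}$, while the paper's contradiction packaging is slightly shorter. Your verification of linear independence via $\frac{|1+t\lambda|-1}{t^2}=\lambda/t\to\pm\infty$ is the same as the parenthetical remark in the paper's proof.
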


\begin{proof}
  The proof of the necessity of the limsup condition is given in  the previous comment. For the sufficiency, assume that $h\in B_{X^*}\setminus\{0\}$, $K,\varepsilon>0$  are such that $\Vert f+th\Vert\leq 1+Kt^2$ for all $t\in (-\varepsilon,\varepsilon)$.
(Note that this implies that $f$ and $h$ are linearly independent.)
  It is enough to show that there exists $0<s\le 1$ such that $\Vert f+tsh\Vert\leq\sqrt{1+t^2}$ for all $t\in \R$. If not, there is a sequence $\{t_n\}$ in $\R$ such that
\begin{equation}\label{limsup}
\Bigl\Vert f+\frac{t_n}{n}h \Bigr\Vert>\sqrt{1+t_n^2} \qquad \mbox{for all }n\in\N.
\end{equation}
Now,
$$
1+\frac{\vert t_n\vert}{n}\Vert h\Vert \geq
\Bigl\Vert f+\frac{t_n}{n}h\Bigr\Vert > \sqrt{1+t_n^2}
$$
for all $n\in \N$, and we deduce that
$$
\frac{\vert t_n\vert}{n} < \frac{2\Vert h\Vert}{n^2-\Vert h\Vert^2}\leq\frac{2}{n^2-1}
$$
for $n>1$. Then, $t_n/n\longrightarrow 0$. From (\ref{limsup}) we get that
$$
\frac{\Vert f+\frac{t_n}{n}h\Vert^2-1}{t_n^2/n^2}\geq n^2
$$
and so
$$
\limsup_{t\to 0}\frac{\Vert f+th\Vert^2-1}{t^2}=+\infty.
$$
But $\Vert f+th\Vert^2-1=(\Vert f+th\Vert+1) (\Vert f+th\Vert-1)$
and  $\lim_{t\to 0}\Vert f+th\Vert+1=2$, and so
$$
\limsup_{t\to 0}\frac{\Vert f+th\Vert-1}{t^2}=+\infty.
$$
This completes the proof.
\end{proof}

Now, we can formulate a first positive result about the existence of mates.

\begin{lemma}\label{lem2.5}
Let $X$ be a Banach space. If $f\in S_{X^*}$ is not an extreme point of $B_{X^*}$, then $f$ has a mate.
\end{lemma}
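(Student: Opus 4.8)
The plan is to reduce everything to Proposition~\ref{Limsup}: it suffices to produce a single direction $h\in B_{X^*}\setminus\{0\}$ along which the norm grows at most quadratically near $f$, that is, $\limsup_{t\to0}\frac{\|f+th\|-1}{t^2}<\infty$, and then the proposition hands us a scalar $0<s\le1$ making $sh$ a mate of~$f$. So the only task is to find a good direction~$h$, and the failure of extremality dictates what it should be.

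Since $f$ is not an extreme point of $B_{X^*}$, there are $g_1,g_2\in B_{X^*}$ with $g_1\neq g_2$ and $f=\frac12(g_1+g_2)$. I would set $h=\frac12(g_1-g_2)$. Then $h\neq0$ and $\|h\|\le\frac12(\|g_1\|+\|g_2\|)\le1$, so $h\in B_{X^*}\setminus\{0\}$; moreover $f+h=g_1$ and $f-h=g_2$ both lie in $B_{X^*}$. The key observation is that the norm is actually $\le1$ along $h$ on the whole interval $[-1,1]$: for $t\in[-1,1]$ one has the convex combination
$$
f+th=\frac{1+t}{2}(f+h)+\frac{1-t}{2}(f-h),
$$
whence $\|f+th\|\le\frac{1+t}{2}\|g_1\|+\frac{1-t}{2}\|g_2\|\le1$. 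In particular $\|f+th\|-1\le0$ for all small~$t$, so $\limsup_{t\to0}\frac{\|f+th\|-1}{t^2}\le0<\infty$.

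Proposition~\ref{Limsup} then yields $0<s\le1$ with $sh$ a mate of~$f$. The linear independence of $f$ and $sh$ needed for $(f,sh)$ to have rank two is automatic, as recorded in the proof of that proposition; it can also be seen directly, since $h=\lambda f$ would force $\|f+th\|=|1+t\lambda|$, which cannot stay $\le1$ for $t$ of both signs unless $\lambda=0$, contradicting $h\neq0$. There is essentially no obstacle here: once one spots that non-extremality supplies the symmetric pair $f\pm h\in B_{X^*}$, the convexity estimate is immediate and all the quantitative work is absorbed by Proposition~\ref{Limsup}.
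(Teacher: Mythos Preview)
Your proof is correct and uses the same key idea as the paper: take $h=\tfrac12(g_1-g_2)$ from a nontrivial midpoint representation $f=\tfrac12(g_1+g_2)$, and exploit the convex-combination identity $f+th=\tfrac{1+t}{2}(f+h)+\tfrac{1-t}{2}(f-h)$ to bound $\|f+th\|$ for $|t|\le1$.

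The only difference is in how the argument is finished. You stop at $\|f+th\|\le1$ on $[-1,1]$, feed the resulting $\limsup\le0$ into Proposition~\ref{Limsup}, and obtain \emph{some} $sh$ as a mate. The paper instead goes one line further and checks the estimate for \emph{all} $t\in\R$: for $|t|\ge1$ the triangle inequality gives $\|f+th\|\le\tfrac{|t|-1}{2}+\tfrac{|t|+1}{2}=|t|\le\sqrt{1+t^2}$, so Proposition~\ref{estimation2} applies directly and $h$ itself is a mate. This avoids the detour through Proposition~\ref{Limsup} and yields the slightly sharper conclusion that no rescaling is needed, but for the purposes of the lemma both routes are equally valid.
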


\begin{proof}
  Suppose $f=\frac12 (f_1+f_2)$, with $f_j\in B_{X^*}$ and $g:= \frac12 (f_1-f_2)\neq0$. Clearly, $\|g\|\le 1$, and $f$ and $g$ are linearly independent. We shall show that $g$ is a mate of $f$ using Proposition~\ref{estimation2}. For $t\in \R$ we have
  $$
  \|f+tg\| = \Bigl\| \frac12 f_1 + \frac12 f_2 + \frac t2 f_2 - \frac t2 f_1\Bigr\| =
  \Bigl\| \frac {1-t}2 f_1 + \frac{1+t}2 f_2 \Bigr\|.
  $$
  The latter norm is $\le1$ for $|t| \le 1$ and is $\le \frac {|t|-1}2  + \frac{1+|t|}2 = |t|$ if $|t|\ge1$. In either case we have $\|f+tg\| \le \sqrt{1+t^2}$.
\end{proof}

As an immediate consequence, if $X^*$ is not strictly convex, then there is $f\in S_{X^*}$ with a mate (this is a not very surprising result, see Proposition~\ref{prop2.8}). If actually $X$ is not smooth, then we get a more interesting result.

\begin{cor}\label{nonsmooth}
Let $X$ be non-smooth Banach space. Then, there is $f\in\NA_1(X)$ with a mate.
\end{cor}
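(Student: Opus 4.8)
The plan is to combine the non-smoothness of $X$ with Lemma~\ref{lem2.5}, whose only gap for our purposes is that it produces a mate for a non-extreme functional without asserting anything about norm attainment; so the whole point will be to exhibit a non-extreme point of $B_{X^*}$ that is \emph{simultaneously} norm attaining.

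First I would unwind the definition of non-smoothness: since $X$ is non-smooth, there is a point $x_0\in S_X$ admitting two distinct supporting functionals $f_1,f_2\in S_{X^*}$, that is, $f_1(x_0)=f_2(x_0)=1$ with $f_1\neq f_2$. In particular both $f_1$ and $f_2$ belong to $\NA_1(X)$. Then I would set $f:=\tfrac12(f_1+f_2)$ and verify that $f\in\NA_1(X)$: one has $\|f\|\le\tfrac12(\|f_1\|+\|f_2\|)=1$, while $f(x_0)=\tfrac12(f_1(x_0)+f_2(x_0))=1$ forces $\|f\|\ge1$, so $\|f\|=1$ and $f$ attains its norm at $x_0$.

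At the same time $f=\tfrac12(f_1+f_2)$ is a non-trivial convex combination of the two distinct points $f_1,f_2\in B_{X^*}$, hence $f$ is not an extreme point of $B_{X^*}$. Thus Lemma~\ref{lem2.5} applies and produces a mate of $f$ (concretely $g=\tfrac12(f_1-f_2)$), which is exactly the functional required. I do not anticipate a genuine obstacle here; the only substantive observation is that feeding the averaging construction of Lemma~\ref{lem2.5} with two supporting functionals at a common point $x_0$ automatically makes their average norm attaining, which is precisely the feature absent from Lemma~\ref{lem2.5} in general.
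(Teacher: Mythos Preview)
Your proof is correct and follows exactly the same approach as the paper: take two distinct supporting functionals $f_1,f_2$ at a single non-smooth point $x_0\in S_X$, observe that their midpoint $f=\tfrac12(f_1+f_2)$ lies in $\NA_1(X)$ but is not extreme in $B_{X^*}$, and invoke Lemma~\ref{lem2.5}. The only difference is cosmetic---you spell out the verification that $\|f\|=1$ and name the explicit mate $g=\tfrac12(f_1-f_2)$, while the paper states these facts more tersely.
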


\begin{proof}
Suppose $x\in S_X$ is such that there are distinct $f_1,f_2\in S_{X^*}$ with $f_1(x)=f_2(x)=1$. Then $f:=\frac12 (f_1+f_2)$ has norm~$1$ and attains its norm at~$x$, but is not an extreme point of the dual unit ball and Lemma~\ref{lem2.5} applies.
\end{proof}

We may also provide a characterization of mates in terms of extreme points of the space of operators.

\begin{proposition}\label{extreme}
Let $X$ be a Banach space and $f\in S_{X^*}$. There is a mate $g\in B_{X^*}\setminus\{0\}$ for $f$ if, and only if, the operator $(f,0)$ is not an extreme point of~$B_{\mathcal{L}(X,\ell_2^2)}$.
\end{proposition}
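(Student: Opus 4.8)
The plan is to exploit the identification of rank-$\le2$ operators $T\in\mathcal{L}(X,\ell_2^2)$ with pairs $(f,g)\in X^*\times X^*$, together with the symmetry $\|(\pm f,\pm g)\|=\|(f,g)\|$ and the inequality $\max\{\|f\|,\|g\|\}\le\|(f,g)\|$ recorded at the start of the section. The guiding reformulation is that a mate of $f\in S_{X^*}$ is exactly a vector $g\in B_{X^*}\setminus\{0\}$, linearly independent from $f$, with $\|(f,g)\|\le1$; indeed the reverse inequality $\|(f,g)\|\ge\|f\|=1$ is automatic, so $\|(f,g)\|\le1$ already forces $\|(f,g)\|=1$ as required in the definition.

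For the forward implication I would argue directly. If $g$ is a mate of $f$, then both $(f,g)$ and $(f,-g)$ lie in $B_{\mathcal{L}(X,\ell_2^2)}$ (each has norm $1$, by the symmetry above), they are distinct because $g\neq0$, and their average is $(f,0)$. Thus $(f,0)$ is a nontrivial midpoint of two points of the unit ball, hence not an extreme point.

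The substance is the converse. Suppose $(f,0)$ is not extreme, so $(f,0)=\frac12(T_1+T_2)$ with $T_1=(f_1,g_1)\neq(f_2,g_2)=T_2$ in $B_{\mathcal{L}(X,\ell_2^2)}$. Comparing coordinates gives $f=\frac12(f_1+f_2)$ and $g_2=-g_1$; write $g:=g_1$, noting $g\in B_{X^*}$ since $\|g\|\le\|T_1\|\le1$. I would then split according to whether $g=0$. If $g\neq0$, convexity together with the symmetry $\|(f_2,g)\|=\|(f_2,-g)\|$ yields
$$
\|(f,g)\|\le\tfrac12\|(f_1,g)\|+\tfrac12\|(f_2,g)\|=\tfrac12\|T_1\|+\tfrac12\|T_2\|\le1,
$$
so it remains only to check that $f$ and $g$ are linearly independent; this is forced, because $\|(f,\lambda f)\|=\sqrt{1+\lambda^2}>1$ for $\lambda\neq0$ would contradict $\|(f,g)\|\le1$. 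Hence $g$ is a mate of $f$. If instead $g=0$, then $T_1\neq T_2$ forces $f_1\neq f_2$, so $f=\frac12(f_1+f_2)$ exhibits $f$ as a nontrivial midpoint of two points of $B_{X^*}$; thus $f$ is not an extreme point of $B_{X^*}$ and Lemma~\ref{lem2.5} supplies a mate.

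The only delicate points — where I expect to spend care rather than meet a genuine obstacle — are the verification of linear independence of $f$ and $g$, handled cleanly by the computation $\|(f,\lambda f)\|=\sqrt{1+\lambda^2}$, and the degenerate configuration $g=0$, in which the non-extremality of $(f,0)$ is \emph{hidden} in the first coordinate and must be routed through the non-extremality of $f$ in $B_{X^*}$ and Lemma~\ref{lem2.5}.
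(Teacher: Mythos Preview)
Your proof is correct and follows essentially the same approach as the paper's: the forward direction is identical, and the converse routes the degenerate situation through Lemma~\ref{lem2.5}. The only cosmetic difference is the case split---the paper splits on whether $f_1=f_2$ (and hence $=f$), whereas you split on whether $g=0$; your split therefore requires the extra convexity-plus-symmetry step $\|(f,g)\|\le\tfrac12\|(f_1,g)\|+\tfrac12\|(f_2,-g)\|$ to cover the overlap case $g\neq0,\ f_1\neq f_2$, while the paper simply sends that case to Lemma~\ref{lem2.5} as well. Your explicit verification of linear independence via $\|(f,\lambda f)\|=\sqrt{1+\lambda^2}$ is a nice touch that the paper leaves implicit.
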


\begin{proof}
Suppose $g\neq0$ and $\|(f,g)\| \le1$. Then also $\|(f,-g)\|\le1$, and so $(f,0)=\frac12 \bigl((f,g)+(f,-g)\bigr)$ is not an extreme point of the unit ball of $\mathcal{L}(X,\ell_2^2)$.

Conversely, if $(f,0)$ is not an extreme point of the unit ball of $\mathcal{L}(X,\ell_2^2)$, then there is a nontrivial convex combination in the unit ball of $\mathcal{L}(X,\ell_2^2)$ representing $(f,0)$, say $(f,0)= \frac12 \bigl((f_1,g_1) + (f_2,g_2)\bigr)$ where $(f,0)\neq (f_1,g_1)$. If $f_1=f_2=f$, then necessarily $g_1\neq0$ and hence $\|(f,g_1)\|\le1$ and $f$ and $g_1$ are linearly independent; if not, then $f$ is not an extreme point of the unit ball, and hence has a mate by Lemma~\ref{lem2.5}.
\end{proof}

We can also ask  if there exists some Banach space $X$ of dimension at least two such that there is no mate for any element in $S_{X^*}$, equivalently $(f,0)$ is an extreme point of $B_{\mathcal{L}(X,\ell_2^2)}$ for every $f\in S_{X^*}$. From Lemma~\ref{lem2.5} we know that the dual of such an example cannot be strictly convex. Indeed, there is no such space whatsoever.

\begin{proposition}\label{prop2.8}
  Let $X$ be a Banach space with $\dim(X)\ge2$. Then, there exists $f\in S_{X^*}$ with a mate. Actually, given linearly independent $f',g'\in X^*$  such that $\|(f',g')\|=1$, there is a rotation $\pi$ on $\ell_2^2$ such that $(f',g')=\pi\circ (f,g)$, $f\in S_{X^*}$ and $g$ is a mate for $f$.
\end{proposition}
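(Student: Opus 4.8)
The plan is to deduce the existence statement from the more precise ``moreover'' part, so I focus on the latter. Since $\dim(X)\ge 2$ there are linearly independent functionals in $X^*$, and after scaling I may assume I am handed a rank-two operator $(f',g')\mycolon X\longrightarrow\ell_2^2$ with $\|(f',g')\|=1$. The idea is to rotate this operator so that its first coordinate becomes a norm-one functional; the second coordinate will then automatically be a mate. Concretely, for $\theta\in\R$ I would set $f_\theta=\cos\theta\, f'+\sin\theta\, g'$ and $g_\theta=-\sin\theta\, f'+\cos\theta\, g'$, so that $(f',g')=\pi_\theta\circ(f_\theta,g_\theta)$, where $\pi_\theta$ is the rotation of $\ell_2^2$ by the angle $\theta$. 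Since rotations are isometries of $\ell_2^2$, every pair $(f_\theta,g_\theta)$ has operator norm $1$, and since $\pi_\theta$ is invertible, $f_\theta$ and $g_\theta$ remain linearly independent for each $\theta$.

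The crucial step is to locate an angle $\theta_0$ for which $\|f_{\theta_0}\|=1$. For this I would record the duality identity
$$
\|(f',g')\|=\sup_{\|x\|\le 1}\,\sup_{a^2+b^2\le 1}\bigl(a f'(x)+b g'(x)\bigr)=\sup_{a^2+b^2\le 1}\|a f'+b g'\|,
$$
which follows from writing the Euclidean norm on $\ell_2^2$ as a supremum of inner products and interchanging the two suprema. Because the map $(a,b)\mapsto\|a f'+b g'\|$ is positively homogeneous, its supremum over the closed disc is attained on the unit circle, so the right-hand side equals $\sup_{\theta}\|f_\theta\|$. Hence $\sup_\theta\|f_\theta\|=\|(f',g')\|=1$. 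As $\theta\mapsto\|f_\theta\|$ is continuous on the compact interval $[0,2\pi]$, this supremum is attained at some $\theta_0$, giving $f:=f_{\theta_0}\in S_{X^*}$.

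It then remains to check that $g:=g_{\theta_0}$ is a mate of $f$. By the previous paragraph $f\in S_{X^*}$, and from $\max\{\|f\|,\|g\|\}\le\|(f,g)\|=1$ we get $g\in B_{X^*}$; moreover $g\neq 0$ and $(f,g)$ has rank two because $f$ and $g$ are linearly independent. Since $\|(f,g)\|=1$, the pair $(f,g)$ meets the definition of a mate directly (equivalently, $\|f+tg\|\le\sqrt{1+t^2}$ for all $t\in\R$ by Proposition~\ref{estimation2}), and $(f',g')=\pi_{\theta_0}\circ(f,g)$ supplies the required rotation. The first assertion then follows by applying this construction to any normalized pair of linearly independent functionals. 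The only genuine point needing care is the duality identity of the second paragraph, namely justifying the interchange of suprema that identifies the operator norm of $(f',g')$ with the maximal norm attained along the rotated family; everything else is a routine compactness-and-continuity argument.
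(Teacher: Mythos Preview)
Your proof is correct and rests on the same underlying observation as the paper's: the adjoint $T^*\mycolon (a,b)\mapsto af'+bg'$ from $\ell_2^2$ to $X^*$ attains its norm on the compact unit circle, and the point of attainment determines the required rotation. Your duality identity $\|(f',g')\|=\sup_{a^2+b^2\le1}\|af'+bg'\|$ is exactly $\|T\|=\|T^*\|$, and your compactness step is exactly ``$T^*\in\NA(\ell_2^2,X^*)$''. The paper packages this slightly differently: from $T^*\in\NA(\ell_2^2,X^*)$ it passes to $T^{**}\in\NA(X^{**},\ell_2^2)$, picks $x_0^{**}\in S_{X^{**}}$ with $\|T^{**}(x_0^{**})\|=1$, and rotates $T^{**}(x_0^{**})$ to $(1,0)$; the bidual witness then certifies $\|f\|=1$ via $x_0^{**}(f)=1$. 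Your version is marginally more elementary since it avoids the detour through the bidual, while the paper's formulation makes the later proof of Theorem~\ref{0} (where a genuine $x_0\in S_X$ is available) read as a parallel argument.
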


\begin{proof}
  Consider the rank two operator $T=(f',g')\in \mathcal{L}^{(2)}(X,\ell_2^2)$ with $\|T\|=1$. Then, $T^*\in \NA(\ell_2^2,X^*)$ and so $T^{**}\in \NA(X^{**},\ell_2^2)$, so there is $x_0^{**}\in S_{X^{**}}$ such that $\Vert T^{**}(x_0^{**})\Vert =\Vert x_0^{**}\Vert=1$. Now, we compose $T$ with a rotation $\pi'$ on $\ell_2^2$ to get a new operator $S=\pi' T$ with $\|S\|=\|S^{**}(x_0^{**})\|=1$ and $S^{**}(x_0^{**})=(1,0)$. Of course, $S$ still has rank two and is represented by
  $$
  (f,g)= \bigl( \cos(\varphi) \cdot f' + \sin(\varphi) \cdot g',\
  -\sin(\varphi) \cdot f' + \cos(\varphi) \cdot g'\bigr)
  $$
  for suitable $\varphi\in (-\pi,\pi]$. Then, we have that $f\in B_{X^*}$, $g\in B_{X^*}\setminus\{0\}$ satisfy that $x_0^{**}(f)=1$ and $x_0^{**}(g)=0$. Therefore $\|f\|=1$, $(f,g)$ has rank two, and $\|(f,g)\|=1$. That is, $g$ is a mate for~$f$.
\end{proof}

We now use all the previous ideas to study norm attaining operators. First, the next result explains the link between norm attaining operators and the existence of mates. It says that, up to rotation and rescaling, norm attaining operators onto $\ell_2^2$ are pairs of the form $(f,g)$ where $f\in \NA_1(X)$ and $g$ is a mate of $f$.

\begin{theorem}\label{0}
Let $X$ be a Banach space and let $T\in \mathcal{L}^{(2)}(X,\ell_2^2)$ with $\|T\|=1$. Then, the following assertions are equivalent:
\begin{enumerate}
  \item[(i)] $T\in \NA^{(2)}(X,\ell_2^2)$.
  \item[(ii)] There are $f, g\in X^*\setminus \{0\}$, $x_0\in S_X$ and a rotation $\pi$ on $\ell_2^2$ such that $f\in \NA_1(X)$ with $f(x_0)=1$, $g(x_0)=0$, $\Vert (f,g)\Vert\leq 1$, and $T=\pi\circ (f,g)$.
  \item[(iii)] There are $f\in \NA_1(X)$ with a mate $g\in B_{X^*}\setminus \{0\}$ and a rotation on $\ell_2^2$ such that $T=\pi\circ (f,g)$.
\end{enumerate}
\end{theorem}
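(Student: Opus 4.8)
The plan is to prove the cycle of implications (i) $\Rightarrow$ (ii) $\Rightarrow$ (iii) $\Rightarrow$ (i), since each arrow is then short and the machinery already developed in this section does most of the work. Throughout I identify $T$ with a pair $(f',g')$ of linearly independent functionals, and I exploit that every rotation $\pi$ on $\ell_2^2$ is a linear isometry: it preserves operator norms, preserves rank (being invertible), and sends any prescribed unit vector of $\ell_2^2$ to any other one.

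First I would prove (i) $\Rightarrow$ (ii). Assuming $T=(f',g')$ attains its norm, pick $x_0\in S_X$ with $\|T(x_0)\|_2=\|T\|=1$. Since $T(x_0)$ is a unit vector of $\ell_2^2$, there is a rotation $\pi$ with $\pi(1,0)=T(x_0)$; set $(f,g):=\pi^{-1}\circ T$, so that $T=\pi\circ(f,g)$. Then $(f,g)(x_0)=\pi^{-1}(T(x_0))=(1,0)$, i.e.\ $f(x_0)=1$ and $g(x_0)=0$, and $\|(f,g)\|=\|T\|=1$. The key observation is that from $f(x_0)=1=\|x_0\|$ we get $\|f\|\geq 1$, while $\|f\|\leq\|(f,g)\|=1$ by the inequality $\max\{\|f\|,\|g\|\}\leq\|(f,g)\|$; hence $\|f\|=1$ and $f$ attains its norm at $x_0$, so $f\in\NA_1(X)$. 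Finally $g\neq 0$, since otherwise $T=\pi\circ(f,0)$ would have rank one, contradicting the hypothesis $T\in\mathcal{L}^{(2)}(X,\ell_2^2)$. This yields (ii).

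Next, (ii) $\Rightarrow$ (iii) is essentially the verification that the $g$ produced is a mate of $f$. From $\|g\|\leq\|(f,g)\|\leq 1$ and $g\neq 0$ we have $g\in B_{X^*}\setminus\{0\}$; the rank-two property of $T$ together with the invertibility of $\pi$ forces $(f,g)=\pi^{-1}\circ T$ to have rank two (equivalently, $f,g$ linearly independent); and evaluating at $x_0$ gives $\|(f,g)\|\geq\|(f,g)(x_0)\|_2=\|(1,0)\|_2=1$, which combined with $\|(f,g)\|\leq 1$ gives $\|(f,g)\|=1$. By the definition of mate, $g$ is then a mate of $f$, and I keep the same rotation $\pi$.

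Finally, for (iii) $\Rightarrow$ (i), let $x_0\in S_X$ satisfy $f(x_0)=\|f\|=1$, which exists because $f\in\NA_1(X)$. Since $g$ is a mate, Proposition~\ref{estimation2} gives $\|f+tg\|\leq\sqrt{1+t^2}$ for all $t\in\R$, and Remark~\ref{rem2.3} then forces $g(x_0)=0$. Hence $(f,g)(x_0)=(1,0)$ has norm one, and since $\|(f,g)\|=1$ by the definition of mate, the pair $(f,g)$ attains its norm at $x_0$; applying the isometry $\pi$ shows $T=\pi\circ(f,g)$ attains its norm at $x_0$, with $\|T\|=\|(f,g)\|=1$, so $T\in\NA^{(2)}(X,\ell_2^2)$. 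The only genuinely non-formal step is this last one: the point is to recognize that ``being a mate'' is exactly the structural condition which, via Remark~\ref{rem2.3}, makes the second coordinate $g$ vanish at the norming point of $f$, so that the pair realizes its norm there. Everything else is bookkeeping with rotations and the elementary norm inequalities for pairs $(f,g)$.
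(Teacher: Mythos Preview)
Your proof is correct and follows essentially the same route as the paper's: the rotation trick to place $T(x_0)$ at $(1,0)$ for (i)$\Rightarrow$(ii), the definition of mate for (ii)$\Rightarrow$(iii), and Remark~\ref{rem2.3} to recover $g(x_0)=0$ for the return to~(i). The only cosmetic difference is that the paper organizes the argument as the two equivalences (i)$\Leftrightarrow$(ii) and (ii)$\Leftrightarrow$(iii) rather than your cycle (i)$\Rightarrow$(ii)$\Rightarrow$(iii)$\Rightarrow$(i), and it leaves the verification in (ii)$\Rightarrow$(iii) as ``immediate'' where you spell it out.
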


\begin{proof}
(i) $\Rightarrow$ (ii). Suppose $T=(f',g')\in \NA^{(2)}(X,\ell_2^2)$, say $\|T\|=\|T(x_0)\|=1$ for some $x_0\in S_X$.
Using a rotation as in the proof of Proposition~\ref{prop2.8}, we
get a new operator $S=\pi T$ with $\|S\|=\|S(x_0)\|=1$ and $S(x_0)=(1,0)$. Now, $S=(f,g)$
satisfies $f(x_0)=1$, $g(x_0)=0$. Hence $\|f\|=1$ and $f\in \NA_1(X)$, but $g\neq0$ since $S$ has rank two as well, that is, $g$ is a mate for $f$. The converse implication (ii) $\Rightarrow$ (i) is clear as $S=(f,g)$ attains its norm at $x_0$ and so does $T=\pi S$.

Finally, (ii) $\Rightarrow$ (iii) is immediate and (iii) $\Rightarrow$ (ii) follows from Remark~\ref{rem2.3}.
\end{proof}

The following corollary summarizes the results of this section so far.

\begin{corollary}\label{prop2.9}
Let $X$ be a Banach space with $\dim(X)\geq 2$. Then the following assertions are equivalent:
  \begin{enumerate}
  \item[(i)]
    $\NA^{(2)}(X,\ell_2^2)\neq \emptyset$.
  \item[(ii)] There is $f\in \NA_1(X)$ with a mate.
  \item[(iii)]
    There are $f\in \NA_1(X)$ and $g\in B_{X^*}\setminus\{0\}$ such that $\Vert f+tg\Vert\leq \sqrt{1+t^2}$ for all $t\in\R$.
  \item[(iv)]
    There are $f\in \NA_1(X)$, $g\in  B_{X^*}\setminus\{0\}$, and $\varepsilon>0$ such that $\Vert f+tg\Vert\leq 1+\frac{t^2}{2}$ for all $t\in (-\varepsilon,\varepsilon)$.
  \item[(v)]
    There are $f\in \NA_1(X)$, $h\in  B_{X^*}\setminus\{0\}$, and $\varepsilon,  K>0$ such that $\Vert f+th\Vert\leq 1+Kt^2$ for all $t\in (-\varepsilon,\varepsilon)$.
  \item[(vi)] There are  $f\in \NA_1(X)$ and $h\in  B_{X^*}\setminus\{0\}$ such that
  $
  \displaystyle\limsup_{t\to 0}\frac{\Vert f+th\Vert-1}{t^2}<\infty.
  $
  \item[(vii)]
    There is $f\in \NA_1(X)$ such that $(f,0)$ is not an extreme point in the unit ball of $\mathcal{L}(X,\ell_2^2)$.
  \end{enumerate}
  The above conditions hold automatically if $X$ is non-smooth.
\end{corollary}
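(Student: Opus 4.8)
The plan is to prove all the equivalences by arranging the conditions into a single cycle, since (i)--(vii) are essentially different repackagings of the notion of a mate; every genuinely nontrivial step has already been isolated in an earlier statement of the section, so the task is one of organization.

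First I would dispatch (i) $\Leftrightarrow$ (ii) using Theorem~\ref{0}. If $\NA^{(2)}(X,\ell_2^2)\neq\emptyset$, I pick any $T$ in this set and rescale so that $\|T\|=1$ (rescaling preserves norm attainment); then the implication (i)$\Rightarrow$(iii) of Theorem~\ref{0} produces $f\in\NA_1(X)$ with a mate, which is exactly (ii). Conversely, if $f\in\NA_1(X)$ has a mate $g$, then $(f,g)$ is a rank-two operator with $\|(f,g)\|=1$, and, by Remark~\ref{rem2.3}, $g(x_0)=0$ at the point $x_0\in S_X$ with $f(x_0)=1$; hence $(f,g)$ attains its norm at $x_0$ and lies in $\NA^{(2)}(X,\ell_2^2)$, giving (i) (this is the content of (iii)$\Rightarrow$(i) in Theorem~\ref{0}).

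Next I would treat the block (ii)--(vi) as a cycle. The equivalence (ii)$\Leftrightarrow$(iii) is immediate from the definition of a mate together with Proposition~\ref{estimation2}; the only point needing a moment's care is that (iii) automatically forces $f$ and $g$ to be linearly independent, for if $g=\lambda f$ with $\lambda\neq0$ then $\|f+tg\|=|1+t\lambda|$ and a short computation shows $|1+t\lambda|\le\sqrt{1+t^2}$ fails for small $t$ of a suitable sign. The chain (iii)$\Rightarrow$(iv)$\Rightarrow$(v)$\Rightarrow$(vi) is elementary: (iii)$\Rightarrow$(iv) uses $\sqrt{1+t^2}\le 1+\tfrac{t^2}{2}$, while (iv)$\Rightarrow$(v) (take $h=g$, $K=\tfrac12$) and (v)$\Rightarrow$(vi) (pass to the $\limsup$) are trivial. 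To close the loop I would invoke the second part of Proposition~\ref{Limsup}: condition (vi) supplies $f\in\NA_1(X)$ and $h\in B_{X^*}\setminus\{0\}$ with $\limsup_{t\to0}(\|f+th\|-1)/t^2<\infty$, and the proposition yields $0<s\le1$ such that $sh$ is a mate of $f$, which is (ii).

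It then remains to attach (vii) and the non-smooth remark. The equivalence (ii)$\Leftrightarrow$(vii) is Proposition~\ref{extreme} applied to a fixed $f\in\NA_1(X)$: such an $f$ has a mate if and only if $(f,0)$ is not an extreme point of $B_{\mathcal{L}(X,\ell_2^2)}$, so the two existence statements match term by term. Finally, if $X$ is non-smooth, then Corollary~\ref{nonsmooth} directly furnishes $f\in\NA_1(X)$ with a mate, so (ii) holds and hence all seven conditions do. I do not expect a real obstacle in assembling this corollary; the subtlest point is the automatic linear independence in (iii), which is exactly what legitimizes applying Proposition~\ref{estimation2} and the definition of a mate.
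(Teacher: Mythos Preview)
Your proposal is correct and follows essentially the same route as the paper: (i)$\Leftrightarrow$(ii) via Theorem~\ref{0}, the chain (ii)$\Rightarrow$(iii)$\Rightarrow$(iv)$\Rightarrow$(v)$\Rightarrow$(vi) as trivial or via Proposition~\ref{estimation2}, the closing step (vi)$\Rightarrow$(ii) via Proposition~\ref{Limsup}, (ii)$\Leftrightarrow$(vii) via Proposition~\ref{extreme}, and the non-smooth case via Corollary~\ref{nonsmooth}. Your extra care in noting that (iii) forces linear independence of $f$ and $g$ is a welcome detail that the paper leaves implicit (it is absorbed into the proof of Proposition~\ref{Limsup} on the (vi)$\Rightarrow$(ii) leg).
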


\begin{proof}
The equivalence between (i) and (ii) follows from Theorem~\ref{0}. The implication (ii) $\Rightarrow$ (iii) is Proposition~\ref{estimation2} and the implications (iii) $\Rightarrow $ (iv)  $\Rightarrow $ (v) $\Rightarrow $ (vi) are trivial. The implication (vi) $\Rightarrow$ (ii) is Proposition~\ref{Limsup}. Finally, the equivalence between (ii) and (vii) is Proposition~\ref{extreme}.

The validity of the conditions in the non-smooth case is remarked in Proposition~\ref{nonsmooth}.
\end{proof}

We note from Corollary~\ref{prop2.9}.vii that $\NA^{(2)}(X,\ell_2^2)=\emptyset$ if, and only if, $(f,0)$ is an extreme point of $B_{\mathcal{L}(X,\ell_2^2)}$ for every $f\in \NA_1(X)$, which implies that every $f\in \NA_1(X)$ is an extreme point of $B_{X^*}$. Again, we see that if $X$ is not smooth there are norm attaining operators from $X$ onto $\ell_2^2$.

The proof of Proposition~\ref{prop2.8} implies the following positive result. We already know the result from Proposition~\ref{prop:kerbotinNAimpliesNA} (or even from Theorem~\ref{Theo2-implication} which shows that it is valid even with a weaker hypothesis), but we include this alternative proof here for completeness.

\begin{cor}
Suppose $X$ is a Banach space for which $\NA(X)$ contains a two-dimensional subspace. Then, $\NA^{(2)}(X,\ell_2^2)\neq \emptyset$.
\end{cor}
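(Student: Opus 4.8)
The plan is to mimic the proof of Proposition~\ref{prop2.8}, the crucial new observation being that the rotation used there keeps the first functional inside the given subspace, so that it automatically remains norm attaining. Write the two-dimensional subspace as $V=\Lin\{f_1,f_2\}\subseteq \NA(X)$ with $f_1,f_2$ linearly independent. After replacing $(f_1,f_2)$ by a common scalar multiple, which still lies in $V$ and stays linearly independent, I may assume that the rank-two operator $T=(f_1,f_2)\in \mathcal{L}^{(2)}(X,\ell_2^2)$ satisfies $\|T\|=1$.

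Next I would run exactly the argument of Proposition~\ref{prop2.8}. Since $T^*\mycolon \ell_2^2\longrightarrow X^*$ has finite rank, it is compact and defined on a (finite-dimensional, hence reflexive) space, so it attains its norm; therefore $T^{**}\in \NA(X^{**},\ell_2^2)$ attains its norm at some $x_0^{**}\in S_{X^{**}}$. Composing $T$ with a suitable rotation $\pi'$ on $\ell_2^2$, I obtain $S=\pi' T=(f,g)$ with $S^{**}(x_0^{**})=(1,0)$, where
$$
f=\cos(\varphi)\cdot f_1+\sin(\varphi)\cdot f_2, \qquad g=-\sin(\varphi)\cdot f_1+\cos(\varphi)\cdot f_2
$$
for a suitable $\varphi\in(-\pi,\pi]$. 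Exactly as in Proposition~\ref{prop2.8}, the relations $x_0^{**}(f)=1$ and $x_0^{**}(g)=0$ force $\|f\|=1$, the pair $(f,g)$ to have rank two, and $\|(f,g)\|=1$; that is, $g$ is a mate of~$f$.

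The key point—and what distinguishes this from the general situation of Proposition~\ref{prop2.8}, where one only obtains $f\in S_{X^*}$—is that $f=\cos(\varphi)\cdot f_1+\sin(\varphi)\cdot f_2\in \Lin\{f_1,f_2\}=V\subseteq \NA(X)$. Combined with $\|f\|=1$, this gives $f\in \NA_1(X)$ admitting the mate~$g$, so by the implication (ii)$\Rightarrow$(i) of Corollary~\ref{prop2.9} we conclude $\NA^{(2)}(X,\ell_2^2)\neq \emptyset$. There is essentially no obstacle beyond noticing this invariance; the one thing worth checking by hand is that $S$ itself attains its norm, and indeed, if $x_0\in S_X$ satisfies $f(x_0)=1=\|f\|$ (such $x_0$ exists since $f\in \NA(X)$), then Remark~\ref{rem2.3} yields $g(x_0)=0$, whence $\|S(x_0)\|^2=f(x_0)^2+g(x_0)^2=1=\|S\|^2$, and $T=(\pi')^{-1}S$ attains its norm as well.
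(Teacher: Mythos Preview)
Your proof is correct and follows essentially the same approach as the paper's: normalize a pair spanning the given subspace, apply the rotation argument from Proposition~\ref{prop2.8}, and observe that the resulting $f$ remains in $\Lin\{f_1,f_2\}\subset \NA(X)$, hence belongs to $\NA_1(X)$ and has a mate. Your additional direct verification at the end (via Remark~\ref{rem2.3}) that $S$ itself attains its norm is a nice touch but not needed once you have invoked Corollary~\ref{prop2.9}.
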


\begin{proof}
  Suppose $f'$ and $g'$ are linearly independent so that $\Lin\{f',g'\}\subset \NA(X)$ and $\|(f',g')\|=1$.
  It was shown in the proof of Proposition~\ref{prop2.8} how to obtain some $f\in S_{X^*}$ with a mate by performing a rotation; note that this $f$ is a linear combination of $f'$ and $g'$ and thus, it is norm attaining by the assumption. Hence, $\NA^{(2)}(X,\ell_2^2)\neq \emptyset$ by Theorem~\ref{0}.
\end{proof}

Our final goal in the section is to discuss the density of norm attaining operators whose range is a two-dimensional Hilbert space in terms of mates.

\begin{proposition}\label{characterizationdensity}
  Let $X$ be a Banach space. Then the following are equivalent:
  \begin{enumerate}
  \item[(i)]
    $\NA(X,\ell_2^2)$ is dense in $\mathcal{L}(X,\ell_2^2)$.
  \item[(ii)]
    For every $f\in S_{X^*}$ and $g\in B_{X^*}\setminus\{0\}$ such that $\Vert (f,g)\Vert=1$ there are sequences $\{f_n\}$ in $\NA_1(X)$ and $\{g_n\}$ in  $B_{X^*}\setminus\{0\}$ and a rotation $\pi$ on $\ell_2^2$ such that $\Vert f_n+tg_n\Vert\leq \sqrt{1+t^2}$ for all $t\in \R$ and all $n\in \N$, and $\lim_n (f_n,g_n)= \pi \circ (f,g)$.
      \end{enumerate}
\end{proposition}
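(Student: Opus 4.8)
The plan is to prove both implications by reducing everything, via Theorem~\ref{0} and Proposition~\ref{prop2.8}, to the normalized description of norm attaining rank-two operators as pairs $(f,g)$ with $f\in\NA_1(X)$ and $g$ a mate of~$f$, and then to control the auxiliary rotations by a compactness argument on the (compact) group of rotations of~$\ell_2^2$, which is where the real work lies.

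For the implication (ii)$\Rightarrow$(i), I would take an arbitrary $T\in\mathcal{L}(X,\ell_2^2)$ and show $T\in\overline{\NA(X,\ell_2^2)}$. If $T$ has rank at most one, then $T=f\otimes u$, and since a rank-one operator into a Hilbert space attains its norm exactly when its functional does, the Bishop--Phelps theorem lets me approximate $f$, hence $T$, by norm attaining rank-one operators. If $T$ has rank two, I may assume $\|T\|=1$ (norm attainment is invariant under positive scaling), write $T=(f',g')$ with $f',g'$ linearly independent, and use Proposition~\ref{prop2.8} to get a rotation $\pi_0$ with $T=\pi_0\circ(f,g)$, where $f\in S_{X^*}$ and $g$ is a mate of~$f$; in particular $\|(f,g)\|=1$. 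Applying hypothesis~(ii) to this pair yields $f_n\in\NA_1(X)$, $g_n\in B_{X^*}\setminus\{0\}$ with $\|f_n+tg_n\|\le\sqrt{1+t^2}$ for all $t$, and a rotation $\pi$ with $(f_n,g_n)\to\pi\circ(f,g)$. The inequality forces $f_n,g_n$ to be linearly independent (a nonzero multiple $g_n=\lambda f_n$ would violate it for small~$t$), so by Proposition~\ref{estimation2} one has $\|(f_n,g_n)\|=1$ and $g_n$ is a mate of $f_n$, whence $(f_n,g_n)\in\NA^{(2)}(X,\ell_2^2)$ by Theorem~\ref{0}. Since $\pi_0\pi^{-1}$ is again a rotation, hence an isometry, the operators $\pi_0\pi^{-1}(f_n,g_n)$ attain their norm and converge to $\pi_0\pi^{-1}\pi\circ(f,g)=\pi_0\circ(f,g)=T$, giving (i).

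For (i)$\Rightarrow$(ii), I fix $f\in S_{X^*}$ and $g\in B_{X^*}\setminus\{0\}$ with $\|(f,g)\|=1$; then $f,g$ are automatically linearly independent (else $\|(f,g)\|=\sqrt{1+\lambda^2}>1$), so $(f,g)$ has rank two. Density provides $T_n\in\NA(X,\ell_2^2)$ with $T_n\to(f,g)$. Because the set of operators of rank at most one is norm-closed (the vanishing of all determinants $f_n(x)g_n(y)-f_n(y)g_n(x)$ passes to the pointwise limit) and $(f,g)$ has rank two, $T_n$ has rank two for $n$ large; normalizing, $S_n:=T_n/\|T_n\|\to(f,g)$ with $\|S_n\|=1$. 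By Theorem~\ref{0} each $S_n$ decomposes as $S_n=\pi_n\circ(f_n,g_n)$ with $f_n\in\NA_1(X)$, a mate $g_n\in B_{X^*}\setminus\{0\}$, and a rotation $\pi_n$; by Proposition~\ref{estimation2} the mate condition is exactly $\|f_n+tg_n\|\le\sqrt{1+t^2}$ for all $t$.

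The main obstacle is that the rotations $\pi_n$ depend on $n$, whereas~(ii) demands a single rotation; I would resolve this by compactness. Passing to a subsequence, $\pi_n\to\pi_\infty$ for some rotation $\pi_\infty$, so $\pi_n^{-1}\to\pi_\infty^{-1}$ as well. Writing $(f_n,g_n)=\pi_n^{-1}S_n$ and estimating
\[
\bigl\|\pi_n^{-1}S_n-\pi_\infty^{-1}(f,g)\bigr\|\le\|S_n-(f,g)\|+\bigl\|(\pi_n^{-1}-\pi_\infty^{-1})(f,g)\bigr\|\longrightarrow 0,
\]
I obtain $(f_n,g_n)\to\pi_\infty^{-1}\circ(f,g)$ along the subsequence. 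Relabelling the subsequence as a full sequence and taking $\pi:=\pi_\infty^{-1}$ furnishes exactly the data required in~(ii), which completes the proof.
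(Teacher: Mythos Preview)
Your proof is correct and follows essentially the same approach as the paper's: both directions hinge on Theorem~\ref{0} and Proposition~\ref{estimation2} to pass between norm attaining rank-two operators and pairs $(f,g)$ with $f\in\NA_1(X)$ and $g$ a mate, together with a compactness argument on the rotation group of~$\ell_2^2$. Your treatment of (i)$\Rightarrow$(ii) is in fact slightly more explicit than the paper's about why a \emph{single} rotation $\pi$ suffices (via the subsequence $\pi_n\to\pi_\infty$ and the estimate for $\pi_n^{-1}S_n-\pi_\infty^{-1}(f,g)$), whereas the paper parametrizes rotations by angles $\varphi_n\to\varphi$ and passes to the limit rotation directly.
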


\begin{proof}
  (i) $\Rightarrow$ (ii):
  Let $f$ and $g$ be as in (ii), and consider the rank-two operator $T=(f,g)$ with $\|T\|=1$. By (i), there is a sequence of norm attaining operators $T_n'=(f_n', g_n')$ converging to~$T$. The $T_n'$ are also of rank two, at least eventually; and we may assume that $\|T_n'\|=1$ for all $n\in \N$ as well. Pick $x_n\in S_X$  such that $\|T_n'(x_n)\|=1$; i.e.,
  $$
  f_n'(x_n)^2 + g_n'(x_n)^2 = 1.
  $$
  Let us consider a rotation $\pi_{\varphi_n}$ by some angle $\varphi_n\in [-\pi, \pi]$ mapping $T_n'(x_n)=(f_n'(x_n), g_n'(x_n))$ to $(1,0)$. By passing to a  subsequence, we may suppose that $\{\varphi_n\}$ converges to some~$\varphi$, and then writing $T_n=(f_n,g_n):= \pi_\varphi\circ (f_n', g_n')$ for every $n\in \N$, we have that the sequence $\{T_n\}$ converges to $\pi_\varphi\circ (f, g)$. Note that $T_n$ belongs to $\NA(X,\ell_2^2)$ for every $n\in \N$ since every $T_n'$ does and, therefore, we have the desired inequality by Proposition~\ref{estimation2} and Theorem~\ref{0}.

  (ii) $\Rightarrow$ (i): By the Bishop-Phelps theorem, one can approximate rank~$1$ operators by norm attaining ones; and by the rotation argument in the proof of Proposition~\ref{prop2.8}, it is enough to show that operators $T=(f,g)$ in $S_{\mathcal{L}(X,\ell_2^2)}$ with  $f\in S_{X^*}$ and $g\in B_{X^*}\setminus\{0\}$ can be approximated. From (ii), there  are sequences $\{f_n\}$ in $\NA_1(X)$ and $\{g_n\}$ in  $B_{X^*}\setminus\{0\}$ such that $\Vert f_n+tg_n\Vert \leq \sqrt{1+ t^2}$ for all $t\in \R$ and $\lim_n (f_n,g_n)=\pi\circ T$, for some rotation~$\pi$.
  By Theorem~\ref{0}, $T_n=(f_n,g_n)$ is norm attaining, hence also $\pi^{-1}\circ T_n\in \NA(X,\ell_2^2)$ and $\pi^{-1}\circ T_n \longrightarrow T$.
\end{proof}

We remark that there are sufficient conditions on a Banach space $X$ expounded in Section~\ref{section:density} to assure that each finite rank operator from $X$ can be approximated by norm attaining finite rank operators; in particular, this is true for $X$ a $C_0(L)$ space, an $L_1(\mu)$ space, a predual of $\ell_1$, or a proximinal subspace of $c_0$ or of $\mathcal{K}(\ell_2)$ of finite codimension. Therefore, for these domain spaces $X$, item~(ii) of Proposition~\ref{characterizationdensity} holds.

\section{A question about Lomonosov's example} \label{sect:Lomonosov}

When one speaks about norm attaining functionals, there is no big difference between the real and the complex case, because a complex functional on a complex space  attains its norm if, and only if, the real part of the functional does and, besides, a complex functional on a complex Banach space is completely determined by its real part. Therefore, if $X$ is a complex space, then the set of complex-linear functionals on $X$ which attain their maximum modulus coincides with the set of those complex-linear functionals on $X$ whose real parts attain their maximum, so this set is dense by the Bishop-Phelps theorem (compare with the situation which occurs when we consider real-linear operators from $X$ to $\C\equiv \ell_2^2$, see Section~\ref{sect:naoperators}).

But in the same papers \cite{BP1, BP2} that deal with norm attaining functionals, Bishop and Phelps considered an analogous question about functionals that attain their maximum on a given set $C$. It is proved that, for a closed bounded convex subset $C$ of a real Banach space, the set of maximum attaining functionals is dense in $X^*$ (in  \cite{BP1} this was just a remark at the end of the paper, saying that the proof may be done in the same way as for norm attaining functionals, and in \cite{BP2} the result is given with all details).

Passing to complex functionals, one cannot speak about the maximal value on a subset $C$, but it is natural to ask if  $\sup_{x \in C} |f(x)|$ is actually a maximum. Let us fix some terminology. For a given subset $C \neq \emptyset$ of a complex Banach space $X$, a non-zero complex functional $f \in X^*$  is said to be a \emph{modulus support functional} for $C$ if there is a point $y \in C$ (called the corresponding \emph{modulus support point} of $C$) such that $|f(y)| = \sup_{x \in C} |f(x)|$. The natural question \cite{Phelps1991} whether for every closed bounded convex subset of a complex Banach space the corresponding set of support functionals is dense in $X^*$ remained open until 2000, when Victor Lomonosov \cite{Lom-2000, Lom-2000+} constructed his striking example of a closed bounded convex subset of the predual space of $H^\infty$  which does not admit any modulus support functionals. A similar construction can be made  \cite{Lom-2001} in the predual $A_*$ of every dual algebra $A$ of operators on a Hilbert space which is not self-adjoint (i.e., there is an operator $T \in A$ such that $T^* \not\in A$), contains the identity operator and such that the spectral radius of every operator
in $A$ coincides with its norm.

By a weak compactness argument, examples of such kind cannot live in a reflexive space. Moreover, they do not exist in spaces with the Radon-Nikod\'{y}m property by Bourgain's result \cite{Bou}, see the argument at the end of page~340 of \cite{Phelps1991}. Therefore, in most classical spaces like $L_p[0, 1]$ with $1<p< \infty$ or $\ell_p$ with $1\le p < \infty$ the complex version of the Bishop-Phelps theorem for subsets is valid. The spaces $\ell_\infty$, $L_\infty[0, 1]$ and $C[0, 1]$ have subspaces isometric to any given separable space, which makes it possible to transfer  Lomonosov's example to these spaces. For the remaining two classical spaces, $c_0$ and $L_1[0, 1]$, the validity of the complex version of the Bishop-Phelps theorem for subsets is an open question.

In the case of the complex space $c_0$, we have an easy way to define a concrete closed, bounded and convex subset $S$ for which we don't know whether its set of modulus support functionals is dense, and not even whether it is non-empty. The first author discussed this example with several colleagues, in particular with Victor Lomonosov, but to no avail. So we decided to use this occasion to appeal to a wider circle of people interested in the subject by publishing the example here.

Let $\D = \{z \in \C \mycolon |z| < 1\}$ be the open unit disk $e_n \in c_0$ be the elements of the canonical basis, and $e_n^* \in \ell_1$ be the corresponding coordinate functionals. For every $z \in  \D$, consider $\varphi_z = \sum_{n=1}^\infty z^n e_n \in c_0$. The set $S \subset c_0$ in question is
\begin{equation}\label{eq:set-S}
S = \cco \{\varphi_z \mycolon z \in  \D\}.
\end{equation}
Remark that, identifying each element $a = (a_1, a_2, \ldots) \in \ell_1$ with the function $f_a$ on the unit disk by the rule $f_a(\zeta) = \sum_{n=1}^\infty a_n \zeta^n$ for all $\zeta\in \D$, we identify $c_0^* = \ell_1$ with the corresponding algebra $\tilde \ell_1$ of analytic functions vanishing at zero and having an absolutely convergent series of Taylor coefficients, equipped with the norm $\|f_a\| = \|a\|_1 =  \sum_{n=1}^\infty |a_n|$.
Taking into account that, in the duality of $c_0$ and~$\ell_1$,
$$
a(\varphi_z ) =  \sum_{n=1}^\infty a_n z^n = f_a(z),
$$
we may identify each element $\varphi_z$ with the evaluation functional $\delta_z$ at the point~$z$ on~$\tilde\ell_1$. Having a look at the papers \cite{Lom-2000, Lom-2000+}, one can see that our $S$ is basically the same as in Lomonosov's example, with the difference that the algebra  $H^\infty$ is substituted by~$\tilde \ell_1$. For every  $a \in \ell_1$, one has that
$$
 \sup_{x \in S} |a(x)| = \sup_{z \in \D} |f_a(z)|,
$$
which is the spectral radius of the element $f_a \in \tilde \ell_1$. Lomonosov uses in his example that in $H^\infty$ the spectral radius of every element is equal to its norm. In $\tilde \ell_1$ this is not the case, which does not permit us to use Lomonosov's argument in our case. Nevertheless, many features survive, which makes the existence of modulus support functionals  very questionable.

At first we remark that by the maximum modulus principle for analytic functions,
$ \sup_{z \in \D} |f_a(z)|$ cannot be attained, so none of the points $\varphi_z$ is a modulus support point. Digging deeper, assume that $y = (y_1, y_2, \ldots) \in S$ is a modulus support point that corresponds to the modulus support functional $b = (b_1, b_2, \ldots) \in \ell_1 \setminus \{0\}$, that is
$$
|b(y)| = \sup_{x \in S} |b(x)| = \sup_{z \in \D} |f_b(z)|.
$$
Pick elements $w_n \in \co\{\varphi_z \mycolon z \in  \D\}$ that converge to $y$,
$w_n = \sum_{k \in \N} w_{n, k} \varphi_{z_k}$, $z_k \in \D$, where $w_{n, k} \ge 0$, $\sum_{k \in \N} w_{n, k} = 1$, and for every $n \in \N$ there is an $m(n)$ such that $w_{n, k} = 0$ for all $k > m(n)$.

Consider the corresponding probability measures $\mu_n = \sum_{k \in \N} w_{n, k} \delta_{z_k} \in C(\bar \D)^*$. By the separability of $C(\bar \D)$, passing to a subsequence, we may assume without loss of generality that the sequence $\{\mu_n\}$ converges in the weak-$*$ topology of $C(\bar \D)^*$ to a Borel probability measure $\mu$ on $\bar \D$. This $\mu$ is related to $y$ as follows: for every $j \in \N$, one has that
\begin{equation*} 
\int_{\bar \D} z^j\, d \mu(z) = \lim_{n \to \infty} \int_{\bar \D} z^j\, d \mu_n(z) = \lim_{n \to \infty} e_j^*(w_n) = e_j^*(y) = y_j,
\end{equation*}
so
\begin{equation*} 
\int_{\bar \D} z^j d \mu(z) \xrightarrow[j \to \infty]{} 0.
\end{equation*}
By a similar argument,
$$
\int_{\bar \D} f_b(z) \,d \mu(z)  = \lim_{n \to \infty} \int_{\bar \D} f_b(z) \,d \mu_n(z) = \lim_{n \to \infty} b(w_n) = b(y),
$$
and consequently
$$
\left|\int_{\bar \D} f_b(z) \,d \mu(z)\right| = |b(y)| = \sup_{z \in {\bar \D}} |f_b(z)|.
$$
Denoting $r = \sup_{z \in {\bar \D}} |f_b(z)|$, we deduce from the above property that  $$\supp \mu \subset \{v \in \bar \D \mycolon |f_b(v)| =  r\} \subset \bar \D \setminus \D$$ and, moreover, the function $f_b$ must take a constant value $\alpha$ on $\supp \mu$ with $|\alpha| = r$. These conditions on $\mu$ and $b$ are very restrictive, and we don't know whether such a wild pair of animals  exists.

We finish the section by emphasizing the question we have been discussing here.

\begin{problem}
Let $S$ be the subset of the complex space $c_0$ given in \eqref{eq:set-S}.
Are the modulus support functionals for $S$ dense in $c_0^*$?
\end{problem}

\bigskip

\noindent \textbf{Acknowledgment.} The authors would like to thank Gilles Godefroy, Rafael Pay\'{a}, and Beata Randrianantoanina for kindly answering several inquiries related to the topics of this manuscript and for providing valuable references.

\end{document}